\def\@settitle{%
	\vspace*{-20pt}
	\begin{flushleft}%
		\baselineskip14\p@\relax
		\normalfont\bfseries\LARGE
		%    \uppercasenonmath\@title
		\@title
	\end{flushleft}%
}
\def\@setauthors{%
	\begingroup
	\def\thanks{\protect\thanks@warning}%
	\trivlist
	%\centering
	\large \@topsep30\p@\relax
	\advance\@topsep by -\baselineskip
	\item\relax
	\author@andify\authors
	\def\\{\protect\linebreak}%
	%  \MakeUppercase{\authors}%
	\authors
	\ifx\@empty\contribs
	\else
	,\penalty-3 \space \@setcontribs
	\@closetoccontribs
	\fi
	\normalfont
	\@setaddresses
	\endtrivlist
	\endgroup
}
\def\@setaddresses{\par
	\nobreak \begingroup\raggedright
	\small
	\def\author##1{\nobreak\addvspace\smallskipamount}%
	\def\\{\unskip, \ignorespaces}%
	\interlinepenalty\@M
	\def\address##1##2{\begingroup
		\par\addvspace\bigskipamount\noindent
		\@ifnotempty{##1}{(\ignorespaces##1\unskip) }%
		{\ignorespaces##2}\par\endgroup}%
	\def\curraddr##1##2{\begingroup
		\@ifnotempty{##2}{\nobreak\noindent\curraddrname
			\@ifnotempty{##1}{, \ignorespaces##1\unskip}\/:\space
			##2\par}\endgroup}%
	\def\email##1##2{\begingroup
		\@ifnotempty{##2}{\smallskip\nobreak\noindent E-mail address%
			\@ifnotempty{##1}{, \ignorespaces##1\unskip}\/:\space
			\ttfamily##2\par}\endgroup}%
	\def\urladdr##1##2{\begingroup
		\def~{\char`\~}%
		\@ifnotempty{##2}{\nobreak\noindent\urladdrname
			\@ifnotempty{##1}{, \ignorespaces##1\unskip}\/:\space
			\ttfamily##2\par}\endgroup}%
	\addresses
	\endgroup
	\global\let\addresses=\@empty
}
\def\@setabstracta{%
	\ifvoid\abstractbox
	\else
	\skip@25\p@ \advance\skip@-\lastskip
	\advance\skip@-\baselineskip \vskip\skip@
	%    \hrule\vskip2pt
	\box\abstractbox
	\prevdepth\z@ % because \abstractbox is a vtop
	%    \vskip2pt\hrule
	\vskip-15pt
	\fi
}
\renewenvironment{abstract}{%
	\ifx\maketitle\relax
	\ClassWarning{\@classname}{Abstract should precede
		\protect\maketitle\space in AMS document classes; reported}%
	\fi
	\global\setbox\abstractbox=\vtop \bgroup
	\normalfont\small
	\list{}{\labelwidth\z@
		\leftmargin0pc \rightmargin\leftmargin
		\listparindent\normalparindent \itemindent\z@
		\parsep\z@ \@plus\p@
		
	}%
	\item[\hskip\labelsep\bfseries\abstractname.]%
}{%
	\endlist\egroup
	\ifx\@setabstract\relax \@setabstracta \fi
}
\def\ps@headings{\ps@empty
	\def\@evenhead{%
		\setTrue{runhead}%
		\normalfont\scriptsize
		\rlap{\thepage}\hfill
		\def\thanks{\protect\thanks@warning}%
		\leftmark{}{}}%
	\def\@oddhead{%
		\setTrue{runhead}%
		\normalfont\scriptsize
		\def\thanks{\protect\thanks@warning}%
		\rightmark{}{}\hfill \llap{\thepage}}%
	\let\@mkboth\markboth
}\ps@headings
\def\section{\@startsection{section}{1}%
	\z@{-1.2\linespacing\@plus-.5\linespacing}{.8\linespacing}%
	{\normalfont\bfseries\Large}}
\def\subsection{\@startsection{subsection}{2}%
	\z@{-.8\linespacing\@plus-.3\linespacing}{.3\linespacing\@plus.2\linespacing}%
	{\normalfont\bfseries\large}}
\def\subsubsection{\@startsection{subsubsection}{3}%
	\z@{.7\linespacing\@plus.1\linespacing}{-1.5ex}%
	{\normalfont\bfseries}}
\def\@secnumfont{\bfseries}
\newtheorem{theorem}{Theorem}[section]
\newtheorem*{theorem*}{Theorem}
\newtheorem{proposition}[theorem]{Proposition}
\newtheorem{corollary}[theorem]{Corollary}
\newtheorem{lemma}[theorem]{Lemma}
\theoremstyle{definition}
\newtheorem{definition}[theorem]{Definition}
\newtheorem*{remark}{Remark}
\numberwithin{equation}{section}
\newcommand{\kh}{{Kh}}
\newcommand{\cdkh}{CDKh}
\newcommand{\dkh}{DKh}
\newcommand{\vckh}{vCKh}
\newcommand{\vkh}{vKh}
\newcommand{\sg}{\mathfrak{s}}
\newcommand{\vup}{v^{\text{u}}_+}
\newcommand{\vum}{v^{\text{u}}_-}
\newcommand{\vlp}{v^{\text{l}}_+}
\newcommand{\vlm}{v^{\text{l}}_-}
\DeclareRobustCommand{\CloseDef}{%
	\leavevmode\unskip\penalty9999 \hbox{}\nobreak\hfill
	\quad\hbox{$\lozenge$}%
}
\begin{document}
	
\vspace*{-50pt}
\title{Computations of the slice genus of virtual knots}
\author{William Rushworth}
\address{Department of Mathematical Sciences, Durham University, United Kingdom}
\email{\href{mailto:william.rushworth@durham.ac.uk}{william.rushworth@durham.ac.uk}}

\def\subjclassname{\textup{2010} Mathematics Subject Classification}
\expandafter\let\csname subjclassname@1991\endcsname=\subjclassname
\expandafter\let\csname subjclassname@2000\endcsname=\subjclassname
\subjclass{57M25, 57M27, 57N70}

\keywords{Rasmussen invariant, virtual knot concordance, slice genus}

\dedicatory{This paper subsumes the (now withdrawn) arXiv submission \href{https://arxiv.org/abs/1603.02893}{On the virtual Rasmussen invariant}.}

\begin{abstract}
A virtual knot is an equivalence class of embeddings of \(S^1\) into thickened (closed oriented) surfaces, up to self-diffeomorphism of the surface and certain handle stabilisations. The slice genus of a virtual knot is defined diagrammatically, in direct analogy to that of a classical knot. However, it may be defined, equivalently, as follows: a representative of a virtual knot is an embedding of \(S^1\) into a thickened surface \( \Sigma_g \times I \); what is the minimal genus of oriented surfaces \( S \hookrightarrow M \times I \) with the embedded \(S^1\) as boundary, where \( M \) is an oriented \(3\)-manifold with \( \partial M = \Sigma_g \)?

We compute and estimate the slice genus of all virtual knots of \(4\) classical crossings or less. We also compute or estimate the slice genus of \(46\) virtual knots of \(5\) and \(6\) classical crossings whose slice status is not determined in the work of Boden, Chrisman, and Gaudreau. The computations are made using two distinct virtual extensions of the Rasmussen invariant, one due to Dye, Kaestner, and Kauffman, the other due to the author. Specifically, the computations are made using bounds on the two extensions of the Ramussen invariant which we construct and investigate. The bounds are themselves generalisations of those on the classical Rasmussen invariant due, independently, to Kawamura and Lobb. The bounds allow for the computation of the extensions of the Rasmussen invariant in particular cases. As asides we identify a class of virtual knots for which the two extensions of the Rasmussen invariant agree, and show that the extension due to Dye, Kaestner, and Kauffman is additive with respect to the connect sum.
\end{abstract}

\maketitle

\section{Introduction}
\subsection{Statement of results}\label{Subsec:results}
A \emph{virtual knot} is an equivalence class of embeddings of \(S^1\) into thickened (closed oriented) surfaces, up to self-diffeomorphism of the surface and handle stabilisations whose attaching spheres do not intersect the embedded \( S^1 \); \emph{virtual links} are defined analogously \cite{Kauffman1998}. They are represented diagrammatically using knot diagrams with an extra crossing decoration, the \emph{virtual crossing} \raisebox{-2.5pt}{\includegraphics[scale=0.3]{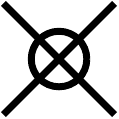}}, up to the virtual Reidemeister moves; see \Cref{Fig:vknot} for such a diagram.

The slice genus of a virtual knot is defined in direct analogy to that of classical knots (see \Cref{Sec:virtualcobordism}); it is less well-studied than that of classical knots, but obstructions to sliceness of virtual knots have been developed by a number of authors. They include the index polynomial of Heinrich \cite{Henrich2008} and the graded genus of Turaev \cite{Turaev2006}. Boden, Chrisman, and Gaudreau \cite{Boden2017} have used these invariants and others to compute or estimate the slice genus of a very large number of the \( 92800 \) virtual knots of \(6\) crossing or less (as given in Green's table \cite{Green}).

In another direction, Manturov and Fedoseev have produced slice obstructions for free knots \cite{Manturov2010,Fedoseev2017,Fedoseev2017a}. A free knot is an equivalence class of \(4\)-valent graphs, and a Gauss code representing a virtual knot may be projected to a code representing a free knot by forgetting the signs and directions of its chords. Given a free knot \( \Gamma \), obstructing the sliceness of \( \Gamma \) necessarily obstructs the sliceness of every virtual knot which projects to it.

We shall focus on the Rasmussen invariant. It has been extended to virtual knots in two different ways, producing two distinct Rasmussen-like invariants: the \emph{virtual Rasmussen invariant} due to Dye, Kaestner, and Kauffman \cite{Dye2014}, and the \emph{doubled Rasmussen invariant} due to the author \cite{Rushworth2017}. Both of these extensions provide obstructions to the sliceness of virtual knots (again see \Cref{Sec:virtualcobordism}).

In this paper we employ these extensions of the Rasmussen invariant to compute or estimate the slice genus of virtual knots. The extensions themselves are derived from two distinct generalisations of Khovanov homology to virtual links, reviewed in \Cref{Sec:review}. The results of the computations are given in two tables which begin on \cpageref{Tab:slicegenera} and \cpageref{Tab:slicegaps} respectively, and are outlined in \Cref{Subsub:results}.

Let \( K \) be a virtual knot; postponing their definition until \Cref{Sec:review}, let \( s( K ) \in 2 \mathbb{Z} \) and \( \mathbbm{s} ( K ) = ( s_1 ( K ), s_2 ( K ) ) \in \mathbb{Z} \times \mathbb{Z} \) denote, respectively, the virtual Rasmussen invariant, and the doubled Rasmussen invariant. Like the classical Rasmussen invariant the quantities \( s ( K ) \) and \( s_1 ( K ) \) are difficult to compute, in general (in constrast \( s_2 ( K ) \) can be computed by hand, as described below). In \Cref{Sec:bounds} four integer quantities are associated to a diagram \( D \) of \( K \) - \( U_v ( D ) \), \( U_d ( D ) \), \( \Delta_v ( D ) \), and \( \Delta_d ( D ) \) - which allow for the estimation of \( s ( K ) \) and \( s_1 ( K ) \).

\begin{theorem*}[\Cref{Thm:delta,Thm:doubledbounds} of \Cref{Sec:bounds}]
	Let \( D \) be a diagram of a virtual knot \( K \). Then
	\begin{equation*}
	U_v ( D ) - 2 \Delta_v ( D ) \leq s ( K ) \leq U_v ( D )
	\end{equation*}
	and
	\begin{equation*}
	U_d ( D ) - \Delta_d ( D ) \leq s_1 ( K ) \leq U_d ( D ).
	\end{equation*}
\end{theorem*}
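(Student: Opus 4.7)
The plan is to adapt to the virtual setting the strategy of Kawamura and Lobb for classical Rasmussen bounds. In both the virtual and doubled Khovanov theories reviewed in \Cref{Sec:review}, the Rasmussen-like invariant is realised as a quantum grading on a distinguished homology class of an appropriate deformation (Lee-type in the virtual case, Bar-Natan-type in the doubled case) of the underlying Khovanov complex; these classes arise from canonical decorations of the oriented resolution $D_o$ of $D$. The four diagrammatic quantities $U_v, U_d, \Delta_v, \Delta_d$ will be defined in \Cref{Sec:bounds} directly in terms of combinatorial data of $D_o$ — in particular the writhe of $D$, the number of circles in $D_o$, and the type of each circle (using the distinction between circle types that is specific to the virtual setting).

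For the upper bounds $s(K) \leq U_v(D)$ and $s_1(K) \leq U_d(D)$, I would argue by direct computation: the ``most positive'' canonical decoration of $D_o$ yields a cycle whose quantum grading is, by construction, equal to $U_v(D)$ (respectively $U_d(D)$). This cycle represents a nonzero class in deformed homology by the standard Lee/Bar-Natan style argument showing that the deformation piece of the differential strictly reduces the filtration on canonical generators, so that survival to the $E_\infty$-page is automatic.

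The lower bounds form the technical heart of the theorem. The idea is that there is a finite family of canonical decorations of $D_o$ — obtained by flipping labels according to a combinatorial rule dictated by the virtual/doubled Lee or Bar-Natan theory — and each one yields a cycle of a possibly different quantum grading. The quantity $\Delta_v(D)$ (respectively $\Delta_d(D)$) is designed to measure the maximal drop in quantum grading between the ``most positive'' canonical decoration and the least one among those still representing a nonzero class in the relevant summand of deformed homology. The main obstacle will be verifying that this combinatorial $\Delta$ really does control the grading of every surviving canonical class; the argument must be made separately in the virtual and doubled theories because the two theories differ in the algebraic structure of the deformed Frobenius algebra (and hence in which label flips yield cohomologous cycles). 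In the virtual case one must additionally reconcile the factor of $2$ present in $s(K) \in 2\mathbb{Z}$ — responsible for the $2\Delta_v$ term — with its absence in the doubled statement, and in the doubled case one must confirm that the bound lands on the first component $s_1$ of $\mathbbm{s}(K) = (s_1(K), s_2(K))$, which will follow from the grading conventions on doubled Bar-Natan homology recorded in \Cref{Sec:review}.
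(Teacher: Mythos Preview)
Your proposal misidentifies the mechanism behind both inequalities. For the upper bound, $U_v(D)$ is \emph{not} the quantum grading of any single ``most positive'' canonical decoration: for a knot there are only the two generators $\sg_o$ and $\sg_{\overline{o}}$, and by \Cref{Lem:ras} and \Cref{smin} both sit at filtration level $s_{\min}(K)$, so there is no distinguished one to pick. The paper instead follows Lobb's argument directly: one resolves a chosen subset of crossings --- the positive ones for the virtual invariant, the even-positive and odd-negative ones for the doubled invariant --- so that the resulting diagram $\widetilde{D} = \widetilde{D}_1 \sqcup \cdots \sqcup \widetilde{D}_r$ has the relevant canonical smoothing as the all-$1$ resolution of each piece. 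This places each $\sg_i$ at the terminal homological degree of its chain complex, whence an explicit bound on its filtration can be read off; summing and restoring the grading shifts from the resolved crossings yields $U_v$ (resp.\ $U_d$). The number of connected components of $T^-_O(D)$ appearing in the formula for $U_v$ is precisely the number $r$ of such pieces, not a count arising from any labelling of $D_o$.

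For the lower bound there is likewise no ``family of canonical decorations'' whose grading drop $\Delta$ records. The argument is the mirror-image trick: apply the upper bound already established to $\overline{D}$, invoke $s(\overline{K}) = -s(K)$ from \Cref{Prop:Rasproperties} (and the analogous property of $s_1$), and rearrange. One checks directly that $U_v(D) + U_v(\overline{D}) = 2\Delta_v(D)$ and $U_d(D) + U_d(\overline{D}) = \Delta_d(D)$, which is the origin of the factor of $2$ in the virtual statement --- it has nothing to do with $s(K) \in 2\mathbb{Z}$. Two further corrections: the perturbed doubled theory is a Lee-type, not Bar-Natan-type, deformation; and in the doubled setting the canonical generators live on the alternately coloured smoothing $\mathscr{S}(D)$ rather than the oriented smoothing, which is why the graph $T_{\mathscr{S}}$, the odd writhe $J$, and the terms $n^o_\pm$ enter the doubled formulae.
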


The bounds \( U_v ( D ) \) and \( U_d ( D ) \) are generalisations of the slice-Bennequin bounds due, independently, to Kawamura \cite{Kawamura2015a} and \cite{Lobb2011} (see \Cref{Subsec:sbbounds}). They are easy to compute for any diagram of \( K \). Further, there are classes of diagrams for which the quantities \( \Delta_v ( D ) \) and \( \Delta_d ( D ) \) simplify. In fact, in \Cref{Sec:deltazero}, we characterise exactly the class of diagrams \(D\) for which \( \Delta_v ( D ) = 0 \) so that \( U_v ( D ) = s ( K ) \).

As an aside, we show that although the extensions \( s \) and \( s_1 \) are distinct in general there is a class of virtual knots on which they agree.

\begin{definition}
	A classical crossing within a virtual knot diagram \( D \) is \emph{even} if it is resolved into its oriented resolution in the alternately colourable smoothing of \( D \); otherwise it is \emph{odd}. A virtual knot diagram is known as \emph{even} if all of its classical crossings are even. A virtual knot is \emph{even} if it possesses an even diagram.\CloseDef
\end{definition}

\begin{remark}
	This definition of odd and even crossings is shown to be equivalent to the standard definition involving Gauss codes in \cite[Proposition \( 4.11 \)]{Rushworth2017}.
\end{remark}

Classically, the oriented smoothing is necessarily alternately colourable (so that every classical knot is even). Virtually, this is no longer the case; consider the diagram given in \Cref{Fig:vknot} (both of its classical crossings are odd).

\begin{theorem*}[\Cref{Cor:evenknots} of \Cref{Sec:review}]
	Let \(K\) be an even virtual knot. Then \( s ( K ) = s_1 ( K ) \).
\end{theorem*}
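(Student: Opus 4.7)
The plan is to compare the canonical Lee-type generators that compute \(s(K)\) and \(s_1(K)\) on a common even diagram \(D\) of \(K\). Both extensions of the Rasmussen invariant arise as the quantum grading of a distinguished cycle in a Lee-type deformation, supported on the oriented resolution of \(D\). The virtual canonical generator labels each circle of that resolution with \(v_\pm\) in the usual fashion, whereas the doubled canonical generator labels each circle with one of \(\vup, \vum, \vlp, \vlm\), the upper/lower choice being dictated by a fixed two-colouring of the alternately colourable smoothing.

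Evenness of \(D\) is precisely the condition that the oriented resolution coincides with the alternately colourable smoothing. Hence both canonical generators are defined on the same planar resolution, and the upper/lower labels demanded by the doubled theory are supplied canonically by the colouring, allowing a direct identification of the two generators. Comparing the two quantum gradings then reduces to a correction term depending only on the odd crossings of \(D\); this correction vanishes by hypothesis, so the virtual and doubled canonical generators sit in the same quantum grading. That these gradings really do compute \(s(K)\) and \(s_1(K)\) is part of the construction of each invariant recalled in \Cref{Sec:review}, so the conclusion follows.

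The main obstacle will be verifying the alignment of the two canonical generators on the shared oriented/colourable resolution, in particular matching the grading-shift conventions built into the virtual and doubled chain complexes so that the Lee-type generators end up in the same quantum grading. Once this compatibility has been made explicit, the equality \(s(K) = s_1(K)\) on even knots is a short bookkeeping argument, which is consistent with its being phrased as a corollary to the material of \Cref{Sec:review} rather than a standalone theorem.
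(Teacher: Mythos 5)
Your argument misses the structural observation that drives the paper's proof, and as a result it has a genuine gap. The paper does not compare canonical generators on the shared resolution at all. Instead it proves (as \Cref{Prop:noeta}) that an even diagram admits a global source-sink orientation, hence has no cut loci, hence \emph{no single-cycle smoothings anywhere in the cube of resolutions}; so the differential in \( CDKh(D) \) and \( vCKh(D) \) contains no \( \eta \) maps. Since the remaining maps \( m \) and \( \Delta \) never mix the upper and lower summands of the doubled theory, the whole complex splits as \( CDKh(D) \cong vCKh(D) \oplus vCKh(D)\{-1\} \) as filtered chain complexes, which immediately gives \( DKh(K) = vKh(K) \oplus vKh(K)\{-1\} \) and thus \( s(K) = s_1(K) \). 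That chain-level splitting is the content of the corollary, and your argument never reaches it.

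Your proposed route --- identifying the two canonical generators on the common oriented/alternately colourable resolution and checking their quantum gradings agree up to an odd-crossing correction --- does not by itself yield equality of the invariants. Both \( s \) and \( s_1 \) are defined via the filtration grading \emph{induced on homology}, i.e.\ as the maximal quantum degree over all chain-level representatives of a given homology class, not as the quantum grading of a fixed chain-level cycle. Matching the quantum gradings of the chain-level generators only gives lower bounds on the relevant filtration levels; to conclude the filtration levels themselves agree you need to control homologous elements in both complexes, which is exactly what the complex-level isomorphism provides. In effect you have noticed that \( U_v(D) = U_d(D) \) when \( D \) is even (the bound formulae of \Cref{Sec:bounds} do collapse to the same expression since \( J(D) = n^o_\pm = 0 \) and \( T_{\mathscr{S}}(D) = T_O(D) \)), but equality of the upper bounds does not imply equality of the invariants unless \( \Delta_v(D) = \Delta_d(D) = 0 \), which is not automatic. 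The step you flag as "the main obstacle" is in fact the entire proof, and it is resolved not by grading bookkeeping but by the absence of \( \eta \) maps.
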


As a final aside, we show that the virtual Rasmussen invariant is additive with respect to connect sum. By an abuse of notation \( K_1 \# K_2 \) denotes any of the knots which can be obtained as a connect sum between \( K_1 \) and \( K_2 \).

\begin{theorem*}[\Cref{Thm:additive} of \Cref{Sec:cangen}]
	For virtual knots \( K_1 \) and \( K_2 \)
	\begin{equation}
	s ( K_1 \# K_2 ) = s ( K_1 ) + s ( K_2 ).
	\end{equation}
\end{theorem*}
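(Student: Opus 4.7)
The plan is to adapt the classical additivity argument for the Rasmussen invariant to the virtual Khovanov homology of Dye, Kaestner, and Kauffman which underlies \( s \). The key observation is that a connect sum can be realised as a local Frobenius multiplication on the oriented resolution, after which standard canonical-generator bookkeeping yields additivity.

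Concretely, I would fix diagrams \( D_i \) of \( K_i \) and form a connect sum diagram \( D = D_1 \# D_2 \) by cutting an arc on each \( D_i \) away from all classical and virtual crossings and joining them via two parallel strands. Then \( n_+ ( D ) = n_+ ( D_1 ) + n_+ ( D_2 ) \) and \( n_- ( D ) = n_- ( D_1 ) + n_- ( D_2 ) \), and the oriented resolution \( D_o \) of \( D \) is obtained from \( D_{1,o} \sqcup D_{2,o} \) by merging the two circles incident to the connect sum arcs into a single circle.

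Next, at the chain level, I would show that the virtual Khovanov--Lee complex of \( D \) decomposes as \( \vckh ( D_1 ) \otimes \vckh ( D_2 ) \) followed by a single Frobenius multiplication \( m \) at the connect sum site, and that under this map the canonical generator on the oriented resolution of \( D \) satisfies \( \sg_o ( D ) = m \bigl( \sg_o ( D_1 ) \otimes \sg_o ( D_2 ) \bigr) \). Since \( m \) lowers quantum grading by \( 1 \) and the writhe shifts are additive, a direct grading chase shows that the quantum gradings of the two canonical generators of \( \vkh ( K_1 \# K_2 ) \) (whose average defines \( s \)) add, yielding \( s ( K_1 \# K_2 ) = s ( K_1 ) + s ( K_2 ) \).

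The main obstacle is pinning down the chain-level merge map in the DKK construction. Because the DKK Frobenius algebra is twisted to accommodate the detour moves arising at virtual crossings, one must verify that at a connect sum site placed away from all crossings this twisting is inactive, so that the Frobenius multiplication on the pair of merged circles reduces to the classical one. Since the statement permits any choice of connect sum arcs, we are free to select the site so the merge is transparent to the DKK twisting; additivity then follows by the standard canonical-generator argument, and is in particular independent of which virtual connect sum representative one chooses.
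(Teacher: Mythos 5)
Your high-level plan --- set up chain-level canonical generators and compare them across a cobordism connecting \( K_1 \# K_2 \) and \( K_1 \sqcup K_2 \) --- is the same one the paper follows, and your concern about whether the DKK twisting (the source-sink / cross-cut structure) interferes at the connect sum site is the genuine technical content of \Cref{Sec:cangen} and \Cref{Lem:ras}. But the final step of your argument, the ``direct grading chase'' claiming equality, has a gap that cannot be papered over.

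The quantity \( s\bigl([\sg_o]\bigr) \) is a \emph{homology-level} filtration grading: it is the maximum of \( q(y) \) over all cycles \( y \) representing \( [\sg_o] \), not the \( q \)-grading of the particular chain \( \sg_o \). Tracking where \( m\bigl(\sg_o(D_1)\otimes\sg_o(D_2)\bigr) \) sits in the chain complex therefore yields only a one-sided inequality on \( s_{min}(K_1\#K_2) \), never an equality. Compounding this, the average of the filtration gradings of the two canonical generators is \emph{not} \( s \): by the corollary to \Cref{Lem:ras} (equation \eqref{smin}), both \( s(\sg_o) \) and \( s(\sg_{\overline{o}}) \) equal \( s_{min}(K) \), and the passage from \( s_{min} \) to \( s \) uses \( s = s_{min}+1 \), which depends on the mod-4 splitting in \Cref{Lem:ras}. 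So your proposed grading chase would, at best, establish something like \( s_{min}(K_1\#K_2) \ge s_{min}(K_1)+s_{min}(K_2) \) (or the reverse, depending on whether you use the merge or the split cobordism), and this alone is not additivity.

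The missing idea is duality via the mirror image. The paper's proof runs the comultiplication \( \Delta' : \vkh'(K_1\#K_2)\to\vkh'(K_1)\otimes\vkh'(K_2) \), observes it carries canonical generators to canonical generators with filtered degree \( -1 \), and obtains one inequality on \( s_{min} \); the opposite inequality then comes for free by applying the same estimate to \( \overline{K_1} \) and \( \overline{K_2} \) and using \( s_{min}(K) = -s_{max}(\overline{K}) \) (part of \Cref{Prop:Rasproperties}). Without this second leg the argument does not close. If you want to keep the multiplication map rather than the comultiplication, that is fine --- the two are interchangeable --- but you still need the mirror-image step, and you need to route the conclusion through \( s_{min} \) and \Cref{Lem:ras} rather than averaging chain-level \( q \)-gradings.
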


\subsubsection{Results of the computation of \( U_v \) and \( U_d \)}\label{Subsub:results}
\Cref{Sec:applications} contains two tables which give the results of the computation of the bounds \( U_v \) and \( U_d \), along with the results of the computation or estimation of the slice genus which follows (see \Cref{Subsec:estimating}). The first table, beginning on \cpageref{Tab:slicegenera}, contains the results for all virtual knots of \(4\) classical crossings or less, as given in Green's table \cite{Green}. The second table, beginning on \cpageref{Tab:slicegaps}, contains the results for \( 46 \) of the \( 248 \) virtual knots of \( 6 \) classical crossings or less whose slice status is not determined in \cite{BCGtable}.

Many of the calculations and estimations of the virtual and doubled Rasmussen invariants are made by identifying that the knot in question is a connect sum, and applying the additivity of both invariants under that operation.

\subsection{Virtual cobordism}\label{Sec:virtualcobordism}
In direct analogue to those of the classical case we make the following definitions (see \cite{Dye2014} and \cite{Kauffman1998c}). Two virtual knot diagrams \( K_1 \) and \( K_2 \) are \emph{cobordant} if one can be obtained from the other by a finite sequence of births and deaths of circles, oriented saddles, and virtual Reidemeister moves. Such a sequence describes a compact, oriented surface, \( S \), such that \( \partial S = K_1 \sqcup K_2 \). If \( g ( S ) = 0 \) we say that \( K_1 \) and \( K_2 \) are \emph{concordant}. If \( K_2 \) is the unknot, and \( K_1 \) is concordant to \( K_2 \) we say that \( K_1 \) is \textit{slice}. In general, we define the \emph{slice genus} of a virtual knot \( K \), denoted \( g^\ast ( K ) \), as
\begin{equation*}
g^\ast ( K ) = \min \lbrace g ( S ) ~|~ S ~\text{a compact oriented connected surface with}~ \partial S = K \rbrace
\end{equation*}
(here we have simply capped off the unknot in \( \partial S \) with a disc). It is natural to ask whether or not the slice genus of a classical knot may be lowered by treating it as a virtual knot. That is, given a classical knot, does the addition of virtual Redeimeister moves allow one to construct a surface bounding it of lower genus than its classical slice genus? This has been answered in the negative by Boden and Nagel \cite{Boden2016}, a concordance analogue to the result of Goussarov, Polyak, and Viro that classical links are left unaltered if one views them as virtual links \cite{Goussarov1998}.

Behind the scenes, the cobordism surface \( S \) is embedded in a \( 4 \)-manifold of the form \( M \times I \), where \( M \) is a compact, oriented \(3\)-manifold with \( \partial M = \Sigma_k \sqcup \Sigma_l \), where \( \Sigma_i \) denotes a closed oriented surface of genus \( i \). The \( 3 \)-manifold \( M \) is described in the standard way in terms of codimension \( 1 \) submanifolds and critical points: starting from \( \partial M = \Sigma_k \), codimension \( 1 \) submanifolds are \( \Sigma_k \) until we pass a critical point, after which they are \( \Sigma_{k \pm 1} \). Critical points of \( M \) correspond to handle stabilisation. A finite number of handle stabilisations are needed to reach \( \Sigma_l \).

As mentioned in the abstract the slice genus of a virtual knot may be defined in a more natural manner. Let \( K \) be a virtual knot and (by an abuse of notation) let \( K \hookrightarrow \Sigma_g \times I \) be representative of \( K \). Then
\begin{equation*}
	g^\ast ( K ) = \min \left\lbrace g ( S ) ~\left|~	
	\begin{matrix}
		S \hookrightarrow M \times I ~\text{an oriented connected surface with}~ \partial S = K \\
		M ~\text{an oriented \(3\)-manifold with}~ \partial M = \Sigma_g
	\end{matrix}
	\right. \right\rbrace .
\end{equation*}
That this second definition is equivalent to the first follows from the observation that given two representatives of \( K \) in \( \Sigma_g \times I \) and \( \Sigma_{g'} \times I \) with \( g \neq g' \), there exists a cylinder (embedded in a thickened oriented \(3\)-manifold) which cobounds them. Further, this definition highlights the higher-dimensional topology at play when one considers the slice genus of virtual knots. In constrast to the classical case, in which the slice genus of a knot depends only on how surfaces bounding that knot may be embedded into \(B^4\), the slice genus of a virtual knot depends on the surface \(S\) and on the \(3\)-manifold \( M \).

\subsection{The slice-Bennequin bounds}\label{Subsec:sbbounds}
The Rasmussen invariant of a classical knot extracts geometric information from Khovanov homology, yielding a lower bound on the slice genus \cite{Rasmussen2010}. Given a classical knot \( K \) it is, in principle, difficult to compute its Rasmussen invariant, denoted \( s ( K ) \), as it is equivalent to the maximal filtration grading of all elements homologous to a certain generator of the Lee homology of \( K \).

Kawamura \cite{Kawamura2015a} and Lobb \cite{Lobb2011} independently defined diagram-dependent upper bounds on \( s (K) \), denoted \( U ( D ) \) (for \( D \) a diagram of \( K \)), which are easily computable by hand, along with an error term, \( \Delta ( D ) \), the vanishing of which implies that \( s ( K ) = U ( D ) \), in fact. More precisely,
\begin{equation*}
U ( D ) - 2 \Delta ( D ) \leq s ( K ) \leq U ( D ).
\end{equation*}
The bounds \( U ( D ) \) are henceforth referred to as the \emph{strong slice-Bennequin bounds}; in \Cref{Sec:bounds} we construct analogous bounds on the virtual and doubled Rasmussen invariants.

\subsection{Estimating the slice genus}\label{Subsec:estimating}
This paper is concerned with the computation of the slice genus of virtual knots. These computations are achieved using the obstructions to sliceness offered by the two extensions of the Rasmussen invariant mentioned above. As stated, the virtual Rasmussen invariant, and one component of the doubled Rasmussen invariant are difficult to compute (this necessitates the construction of the bounds as mentioned in \Cref{Subsec:results}). The other component of the doubled Rasmussen invariant is, however, readily computable. Precisely, the quantity \( s_2 ( K ) \) can be computed from quickly from any diagram of \( K \), as it is equal to the odd writhe of \( K \). That is:

\begin{theorem*}[[Proposition \( 4.11 \) of \cite{Rushworth2017}]
	Let \( D \) be a diagram of a virtual knot \( K \). Let \( J ( D ) \) denote the sum of the signs of the odd crossings of \( D \). This is a knot invariant, known as the odd writhe of \( K \), and denoted \( J ( K ) \) \cite{Kauffman2004b}. Then \( s_2 ( K ) = J ( K ) \).
\end{theorem*}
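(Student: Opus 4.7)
The plan is to identify a canonical cycle in the doubled Lee-type complex of \( D \), compute its bi-filtration grading explicitly from diagrammatic data, and read off the second coordinate as a signed count over odd crossings. The key input is that, just as Lee's canonical generator for a classical diagram is built on the oriented resolution, doubled Khovanov homology admits analogous generators constructed from the four labels \( \vup, \vum, \vlp, \vlm \) assigned to the circles of the oriented smoothing of \( D \) (with the labels determined by an orientation of the resolution, mirroring the Lee construction). The invariant \( \mathbbm{s} ( K ) = ( s_1 ( K ), s_2 ( K ) ) \) is then obtained from the bi-grading of the homology class of this generator, and in particular \( s_2 \) tracks a secondary quantum shift recorded by the doubled theory.

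First I would recall/set up the bi-grading conventions on the doubled Khovanov chain complex and write down the canonical generator \( \sg_o \) attached to the oriented resolution of \( D \), exactly as in the classical Lee construction but in the doubled basis. Next I would compute the homological and bi-quantum grading of \( \sg_o \) directly as a sum of local contributions: each classical crossing contributes a shift whose form depends on its sign \emph{and} on whether its oriented resolution coincides with the resolution dictated by the alternately colourable smoothing. Using the definition of even and odd crossings given earlier in the excerpt, an even crossing has these two resolutions in agreement and therefore contributes only to the first coordinate of the grading, whereas an odd crossing contributes its sign to the second coordinate. Summing the contributions, the second coordinate of the grading of \( \sg_o \) is exactly \( \sum_{c~\text{odd}} \mathrm{sgn}(c) = J ( D ) \).

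It then remains to verify that the bi-grading of the homology class, and not merely of the chain-level representative, gives \( s_2 ( K ) \) — i.e.\ that \( \sg_o \) is non-zero in the appropriate limit of the doubled Lee homology and that no better representative exists in the second coordinate. For this I would invoke the structure theorem for doubled Lee homology (the virtual analogue of Lee's splitting), which identifies the surviving homology class with \( \sg_o \) and pins down its bi-grading. Invariance of \( J ( K ) \) under the virtual Reidemeister moves is already known (Kauffman), so combined with the invariance of \( s_2 \) the diagram-dependent equality \( s_2 ( K ) = J ( D ) \) promotes to the knot-invariant statement.

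The main obstacle I anticipate is the bookkeeping in the second step: correctly identifying which crossings produce a shift in the \( s_2 \)-direction versus the \( s_1 \)-direction. The clean dichotomy is provided precisely by the even/odd classification via the alternately colourable smoothing, and making that dichotomy match up with the bi-grading shifts prescribed by the doubled Khovanov differentials is the heart of the argument.
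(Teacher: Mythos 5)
Your overall plan — identify a canonical generator, compute its grading locally crossing by crossing, and observe that the even/odd dichotomy isolates a contribution equal to \( J(D) \) — is the right \emph{shape} of argument, and your final step (invariance of \( J \) plus invariance of \( s_2 \)) is fine. But two of your framing choices are actually wrong, not merely vague, and if taken literally the argument collapses.

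First, you place the canonical generator on the \emph{oriented} smoothing of \( D \). That is where it lives classically, but a central structural difference in doubled Khovanov homology — and the very reason the odd/even distinction exists — is that the oriented smoothing of a virtual knot diagram need not be alternately colourable. The alternately coloured generators \( \sg^{\text{u}}, \sg^{\text{l}} \) of \( \dkh' \) are supported on the \emph{alternately coloured} smoothing \( \mathscr{S}(D) \), which differs from the oriented smoothing at precisely the odd crossings. If you build a Lee-type labelling on the oriented smoothing you are not producing a cycle in the doubled Lee complex at all; and if you force the classical construction you would always land in homological degree zero, which would give \( s_2 \equiv 0 \).

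Second, \( s_2(K) \) is not a ``secondary quantum shift.'' By definition in this paper it is the single non-trivial \emph{homological} degree of \( \dkh'(K) \); the quantum filtration is what produces \( s_1 \). Once you make both corrections the argument becomes a one-line computation and needs no talk of bi-quantum gradings: the homological height of a smoothing is the number of \(1\)-resolutions minus \( n_- \). In \( \mathscr{S}(D) \), even crossings are oriented-resolved and odd crossings unoriented-resolved, so the \(1\)-resolutions are exactly the even negatives and the odd positives. Hence the height is
\begin{equation*}
  \bigl( n^e_- + n^o_+ \bigr) - n_- \;=\; \bigl( n^e_- + n^o_+ \bigr) - \bigl( n^e_- + n^o_- \bigr) \;=\; n^o_+ - n^o_- \;=\; J(D),
\end{equation*}
which is \( s_2(K) \) since the doubled Lee homology of a knot is concentrated in this degree. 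So: swap the oriented smoothing for the alternately coloured one, and replace ``second quantum coordinate'' by ``homological degree''; then your local-contribution idea goes through and is essentially the intended proof.
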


\begin{theorem*}[Theorem \( 5.8 \) of \cite{Rushworth2017}]
	Let \( K \) be a virtual knot such that \( s_2 ( K ) \neq 0 \). Then \( K \) is not slice.
\end{theorem*}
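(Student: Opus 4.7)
The plan is to prove the contrapositive: if $K$ is slice, then $s_2(K) = 0$. A slice virtual knot $K$ bounds an oriented disc inside some $M \times I$; puncturing this disc exhibits $K$ as concordant to the unknot via a genus zero cobordism $S$. The argument then reduces to tracking canonical generators through the chain map induced by $S$ on doubled Khovanov homology.

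First I would compute the doubled Rasmussen invariant of the unknot directly from the definition in \cite{Rushworth2017}; in the canonical basis $\vup, \vum, \vlp, \vlm$ of $\dkh$ of the unknot, one reads off $\mathbbm{s} = (0, 0)$. Next I would invoke (or verify) the filtered functoriality of doubled Khovanov homology with respect to virtual cobordisms: an oriented cobordism $S$ between virtual knots $K_1$ and $K_2$ induces a chain map $\Phi_S$ whose degrees with respect to both filtrations on $\dkh$ are controlled by $-\chi(S)$. Concatenating the elementary cobordism maps (Reidemeister invariance from \cite{Rushworth2017}, the saddle/birth/death maps, and the handle stabilisation map of the ambient 3-manifold) and applying this to the concordance $S$ from $K$ to the unknot, the total shift in both gradings is $-\chi(S) = 0$, and $\Phi_S$ carries canonical generators to canonical generators up to nonzero scalar. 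Comparing bigradings then forces $\mathbbm{s}(K) = (0, 0)$, and in particular $s_2(K) = 0$.

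The main obstacle is the second step: establishing the precise grading behaviour of the saddle, birth, and death maps with respect to both filtrations on doubled Khovanov homology, and verifying that they send the canonical generator of $K$ to a nonzero canonical generator of the unknot after the full cobordism. In the classical Lee theory this is a standard calculation; in the doubled setting the extra filtration used to define $s_2$ necessitates a separate check, and the presence of handle stabilisations of the ambient $3$-manifold must be reconciled with the chain-level picture. An alternative or sanity check is offered by the identification $s_2(K) = J(K)$: one may instead directly verify that the odd writhe is unchanged by each elementary cobordism move, which in combination with $J(\text{unknot}) = 0$ gives the desired conclusion.
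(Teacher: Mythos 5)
Your proposal follows the correct and essentially standard Rasmussen-style route: prove the contrapositive, appeal to functoriality of the doubled Lee theory under cobordisms, show the concordance map carries canonical generators to nonzero multiples of canonical generators, and compare gradings with the unknot. This is the approach taken in the cited paper, so the plan is sound.

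Two points worth tightening. First, $s_2$ is not the degree of a second filtration on $\dkh'$: by definition it is the single non-trivial \emph{homological} degree in which $\dkh'(K)$ lives. The reason the argument works is not that the homological shift of the concordance map equals $-\chi(S)=0$; rather, saddle, birth, and death maps preserve homological degree for structural reasons in any Khovanov-type cube of resolutions, and this is true for cobordisms of \emph{any} genus. Only the quantum filtration degree is governed by $\chi(S)$. Your conclusion is correct for a concordance since both shifts happen to vanish, but the stated reasoning would suggest a homological shift of $2g$ for a genus-$g$ cobordism, which is false — so the justification should be restated as the structural preservation of homological degree, combined with nonvanishing of the induced map. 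Second, the worry about reconciling handle stabilisations of the ambient $3$-manifold with the chain-level picture is not a genuine obstacle: at the diagrammatic level handle stabilisations are exactly the detour (virtual Reidemeister) moves, and these are already handled by the invariance isomorphisms of $\dkh'$, which preserve both gradings. The one step that genuinely requires verification, as you correctly flag, is that the full concordance map is nonzero on canonical generators. The suggested sanity check via $s_2(K)=J(K)$ and concordance invariance of the odd writhe is a valid and rather more elementary alternative, and is a good consistency check on the homological computation.
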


Whilst it is more difficult to compute, the other component of the doubled Rasmussen invariant also obstructs sliceness.

\begin{theorem*}[Corollary \(5.5\) of \cite{Rushworth2017}]
	Let \( K \) be a virtual knot with \( s_2 ( K ) = 0 \). If \( s_1 ( K ) \neq 0 \) then \( K \) is not slice.
\end{theorem*}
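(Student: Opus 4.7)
The plan is to proceed by contrapositive and derive the result from a slice genus bound for the \( s_1 \) component of the doubled Rasmussen invariant, in direct analogy with Rasmussen's classical bound \( |s(K)| \leq 2 g^\ast(K) \). Specifically, I would aim to establish that, under the hypothesis \( s_2(K) = 0 \), one has \( |s_1(K)| \leq 2 g^\ast(K) \). The statement then follows immediately: a slice knot has \( g^\ast(K) = 0 \), which forces \( s_1(K) = 0 \) and contradicts \( s_1(K) \neq 0 \).

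To establish such a bound, I would take an oriented cobordism \( S \) between \( K \) and the unknot \( U \) realising the slice genus, and decompose it into elementary pieces: births, deaths, oriented saddles, and virtual Reidemeister moves. Each elementary piece induces a map on doubled Khovanov homology with a controlled quantum filtration shift (birth/death shifting by \( \pm 1 \), saddles by \( -1 \), virtual Reidemeister moves by \( 0 \)). The canonical Lee-type generators whose filtration gradings define \( s_1 \) and \( s_2 \) can then be propagated through the composition; comparing the image filtration gradings with the known gradings of the canonical generators of \( U \), and summing the shifts, produces an inequality relating \( s_1(K) \) to \( g(S) \) of the required form. This mirrors the structure of the classical Rasmussen--Lee argument applied to the deformed theory underlying doubled Khovanov homology.

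The main obstacle is verifying that the canonical generators behave correctly under the saddle maps in the doubled setting: one must show that they survive as non-zero elements in the deformed theory and that their quantum filtration shifts match those of the classical Rasmussen--Lee calculation. This is precisely where the hypothesis \( s_2(K) = 0 \) is expected to enter, ensuring that the two filtration gradings of the canonical generators are compatibly aligned so that the entire cobordism-induced filtration shift can be read off on the \( s_1 \) component alone. Without this alignment the shift can leak into the \( s_2 \) grading, preventing a clean bound on \( s_1 \); with it in place, the translation of Rasmussen's argument is direct and yields \( s_1(K) = 0 \) for every slice \( K \) with \( s_2(K) = 0 \), as required.
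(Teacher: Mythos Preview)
The paper does not contain a proof of this statement: it is quoted from \cite{Rushworth2017} (as Corollary~5.5 there) and used as a black-box input to the slice-genus computations in \Cref{Sec:applications}. So there is no in-paper proof to compare against.

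That said, your outline is the correct one and is almost certainly what underlies the cited corollary: one proves a genus bound of the form \(|s_1(K)| \leq 2 g^\ast(K)\) (under \(s_2(K)=0\)) by running the Rasmussen--Lee cobordism argument through the doubled Lee complex, and then specialises to \(g^\ast(K)=0\). Your description of where the hypothesis \(s_2(K)=0\) enters is slightly imprecise, though. The point is not an alignment of ``two filtration gradings'' but rather that \(s_2(K)\) is the \emph{homological} degree in which the alternately coloured generators sit (see \Cref{Def:virtualRasmussen} and the discussion after \Cref{Thm:leerank}). Cobordism-induced maps shift homological degree in a way that nets to zero over a concordance, and the unknot has its generators in homological degree \(0\); this is exactly why \(s_2(K)\neq 0\) already obstructs sliceness (Theorem~5.8 of \cite{Rushworth2017}). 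Once \(s_2(K)=0\), the generators of \(K\) and of the unknot live in the same homological degree, the cobordism maps are nonzero on them, and the quantum-filtration bookkeeping goes through verbatim to give the bound on \(s_1\). Sharpening your third paragraph to say this would make the sketch complete.
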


The virtual Rasmussen invariant provides a lower bound on the slice genus of a virtual knot.

\begin{theorem*}[Theorem \( 5.6 \) of \cite{Dye2014}]
	Let \( K \) be a virtual knot. Then \( | s ( K ) | \leq 2 g^\ast ( K ) \).
\end{theorem*}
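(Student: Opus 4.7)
The plan is to mimic Rasmussen's original strategy, adapted to the Lee-type deformation of the virtual Khovanov homology used by Dye--Kaestner--Kauffman to define \( s ( K ) \). Recall that \( s ( K ) \) is extracted from a quantum filtration on a Lee-type homology, via the maximal filtration grading among representatives of a canonical generator class. The guiding principle is: every elementary piece of a virtual cobordism induces a filtered map on Lee homology whose filtration shift is controlled by the Euler characteristic of that piece, and these pieces assemble to give \( | s ( K_1 ) - s ( K_2 ) | \leq - \chi ( S ) \) for any connected cobordism \( S \) between \( K_1 \) and \( K_2 \). Specialising to a minimal-genus surface bounding \( K \), capping off on the unknot side, and using \( s ( \text{unknot} ) = 0 \), one obtains \( | s ( K ) | \leq 2 g^\ast ( K ) \).

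More concretely, I would carry out the following steps in order. First, decompose any virtual cobordism \( S \) between \( K_1 \) and \( K_2 \) into a finite sequence of births, deaths, oriented saddles, virtual Reidemeister moves, and handle stabilisations of the ambient thickened surface. Second, for each elementary piece, verify that it induces a well-defined map on the virtual Lee homology, and compute the filtration shift: births shift by \( +1 \), deaths by \( -1 \), saddles by \( -1 \), the virtual Reidemeister moves and handle stabilisations preserve the filtration (up to a fixed shift accounted for in the normalisation of \( s \)). Third, observe that a connected cobordism surface requires exactly \( 1 - \chi ( S ) \) elementary Morse critical points beyond a minimal collection, so the composite cobordism map shifts the quantum filtration by at most \( - \chi ( S ) \) in absolute value. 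Fourth, check that the composite cobordism map sends the canonical Lee generator of \( K_1 \) to a nonzero scalar multiple of the canonical Lee generator of \( K_2 \); this non-vanishing is what upgrades the filtration shift statement into an inequality on the \( s \)-invariants, yielding \( | s ( K_1 ) - s ( K_2 ) | \leq - \chi ( S ) \).

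To close out the proof, take \( S \) to be a genus \( g^\ast ( K ) \) surface bounding \( K \), and view it as a cobordism from the unknot to \( K \) (by puncturing and removing a disc). Then \( - \chi ( S ) = 2 g^\ast ( K ) \), and since \( s ( \text{unknot} ) = 0 \), the inequality from the previous step yields the desired bound. The sign ambiguity is handled symmetrically by considering \( S \) read in either direction.

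The main obstacle, I expect, is the fourth step, and specifically two features that are absent in the classical setting. One is that virtual cobordisms are embedded in \( M \times I \) for a \(3\)-manifold \( M \), so handle stabilisations of \( M \) appear as part of any cobordism decomposition; one must show that these stabilisations act trivially on the canonical Lee generator class and respect the filtration. The other is that in the Dye--Kaestner--Kauffman construction the Frobenius algebra carries twisting data arising from the alternately colourable smoothing, and checking that saddles respect this data, and act as expected on canonical generators, requires care with signs and orientations of the intermediate resolutions. Once these naturality issues are resolved, the assembly of the Euler characteristic count is essentially formal.
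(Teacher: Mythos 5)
This theorem is quoted, not proved, in the present paper --- it is cited as Theorem 5.6 of Dye--Kaestner--Kauffman \cite{Dye2014} and used as a black box, so there is no in-paper proof to compare against. What follows assesses your reconstruction on its own terms.

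Your strategy is the right one and, to my knowledge, matches the argument in \cite{Dye2014}: decompose the cobordism into elementary pieces, control the filtration shift of each, and show the composite sends a canonical Lee generator to a nonzero multiple of a canonical Lee generator, which upgrades the shift estimate to $|s(K_1) - s(K_2)| \le -\chi(S)$. One concrete slip in your bookkeeping: deaths (caps), like births (cups), shift the quantum filtration by $+1$, not $-1$. Each is a disc with $\chi = 1$, and the governing rule is that an elementary cobordism map has filtered degree equal to $\chi$ of that piece; for a decomposition into $b$ births, $d$ deaths, and $s$ saddles the total is $\chi(S) = b + d - s$. With your signs the total shift would be $b - d - s$, which is not $\chi(S)$, and the Euler-characteristic assembly in your third step would fail to close. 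The inequality you ultimately write, $|s(K_1) - s(K_2)| \le -\chi(S)$, is the correct one; it is only the intermediate accounting that needs fixing. You are also right that the two genuinely virtual subtleties are (i) showing ambient handle stabilisations act filtration-preservingly and carry the canonical class to the canonical class, and (ii) naturality of the saddle maps against the source-sink and cross-cut decoration --- these are indeed the points where the classical Rasmussen argument must be supplemented, and flagging them is the correct instinct.
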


The computations and estimations of the slice genus are made as follows. Let \( D \) be the diagram of a virtual knot \( K \) given in Green's table \cite{Green}, then:
\begin{enumerate}[(i)]
	\item Compute \( U_v ( D ) \), \( U_d ( D ) \), \( \Delta_v ( D ) \), \( \Delta_d ( D ) \), and \( s_2 ( K ) \) for \( D \), in order to estimate or compute \( s ( K ) \) and \( s_1 ( K ) \).
	\item Take the greatest of the upper bounds on \( g^\ast ( K ) \) provided by the estimations or computations of \( s ( K ) \), \( s_1 ( K ) \), and \( s_2 ( K ) \).
	\item Attempt to find a cobordism from \( D \) to the unknot of genus equal to the greatest upper bound on \( g^\ast ( K ) \), thus computing \( g^\ast ( K ) \).
	\item Failing that, find a cobordism of higher genus so that a region in which \( g^\ast ( K ) \) lies is identified.
\end{enumerate}

\subsection{Plan of the paper}
First, in \Cref{Sec:review}, we outline the issues faced when extending Khovanov homology to virtual links, and review two distinct ways of overcoming them i.e.\ two extensions of Khovanov homology to virtual links. Further, we review the extensions of the Rasmussen invariant produced from each of the homology theories. We also identify in \Cref{Subsec:evenknots} a class of virtual knots for which the two extensions of the Rasmussen invariant are equal.

Next, in \Cref{Sec:cangen}, we produce canonical chain-level generators of one of the relevant homology theories. This is done by simplifying the decorated diagrammatic generators defined in \cite{Dye2014}, so that elements of the algebraic chain complex may be read off from them.

These canonical generators are required in \Cref{Sec:bounds}, in which we construct the strong slice-Bennequin bounds on both the virtual and the doubled Rasmussen invariant. In this we follow much the same path as Lobb \cite{Lobb2011}; in fact, in the case of the virtual Rasmussen invariant, we recover formulae identical to his. In the case of the doubled Rasmussen invariant, however, the formulae arrived at are substantially different, a consequence of the structural differences between doubled Khovanov homology and its classical predecessor.

Finally, in \Cref{Sec:applications}, we use the tools we have developed to compute or estimate the slice genus of a large portion of the knots given in Green's table \cite{Green}.

\noindent \textbf{Acknowledgements.} We thank A Referee for very helpful comments on an earlier version of this paper, and Hans Boden, Micah Chrisman, and Robin Gaudreau for sharing and discussing their work.

\section{Review}\label{Sec:review}
We review the two homology theories used throughout this work. In an attempt to avoid confusion we shall refer to the theory due to Manuturov and reforumulated by Dye, Kaestner, and Kauffman as MDKK homology, and denote it by \( \vkh \). We denote the other theory in question, doubled Khovanov homology, by \( \dkh \). Classical Khovanov homology, where required, is denoted by \( \kh \). The perturbed versions of the theories are denoted by \( \vkh ' \), \( \dkh ' \), and \( \kh ' \).

The review of MDKK homology contained in \Cref{Sec:DKKreview} is substantially more detailed than the review of doubled Khovanov homology (contained in \Cref{Sec:doubledreview}). This is because the methods used in \Cref{Sec:bounds} require chain-level generators of the complexes \( \vkh ' \) and \( \dkh ' \). We are already in possession of such generators in case of \( \dkh ' \) but not \( \vkh ' \). (In \Cref{Sec:cangen} we construct these generators.)

Before outlining the homology theories we describe the complications one encounters when attempting to extend Khovanov homology to virtual links.

\subsection{Extending Khovanov homology}\label{Subsec:extending}
Manturov first defined Khovanov homology for virtual links \cite{Manturov2006}. His theory was reformulated by Dye, Kaestner, and Kauffman in order to define a virtual Rasmussen invariant \cite{Dye2014}. An alternative extension of Khovanov homology to virtual links is doubled Khovanov homology, which provides the doubled Rasmussen invariant \cite{Rushworth2017}. Here we briefly outline the problems encountered in attempting to extend Khovanov homology to virtual links, and the paths taken in \cite{Dye2014} and \cite{Rushworth2017} to overcome them.

The fundamental obstruction to transferring Khovanov homology to the virtual setting is the existence of the \textit{single-cycle smoothing} depicted in \Cref{Fig:121} (otherwise known as a \textit{one-to-one bifurcation}). If the module assigned to a circle within a smoothing is the same as that assigned by classical Khovanov homology the map associated to this smoothing, denoted \( \eta \), must be identically zero, in order to preserve the quantum grading. This, in turn, causes the face depicted in \Cref{Fig:problemface} to fail to commute. Notice that the differential along the top and right-hand edges is \( \eta \circ \eta = 0 \), but along the left-hand and bottom edges it is \( m \circ \Delta \neq 0 \) so that \( d^2 \neq 0 \).

\begin{figure}
	\centering
	\begin{subfigure}[b]{0.4\textwidth}
		\begin{centering}
			\begin{tikzpicture}[scale=0.8,
			roundnode/.style={}]
			
			\node[roundnode] (s0)at (-2,0)  {
				\includegraphics[scale=0.5]{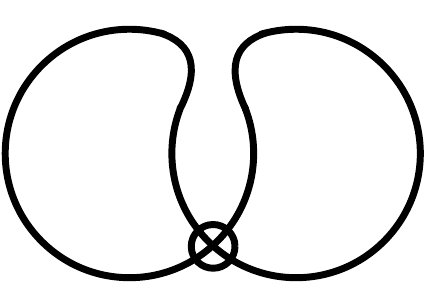}};
			
			\node[roundnode] (s1)at (2,0)  {
				\includegraphics[scale=0.5]{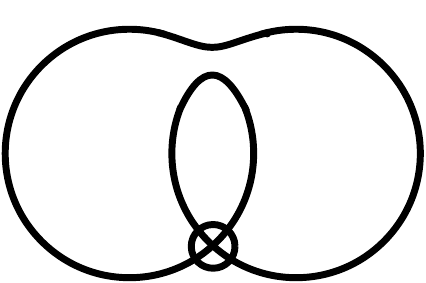}};
			
			\draw[->,thick] (s0)--(s1) node[above,pos=0.5,thick]{\( \eta \)} ;			
			\end{tikzpicture}	
		\end{centering}
		\caption{The single-cycle smoothing.}\label{Fig:121}
	\end{subfigure}
	~
	\begin{subfigure}[b]{0.4\textwidth}
		\begin{centering}
			\begin{tikzpicture}[
			roundnode/.style={}]
			
			\node[roundnode] (s0)at (-2,0)  {
				\includegraphics[scale=0.35]{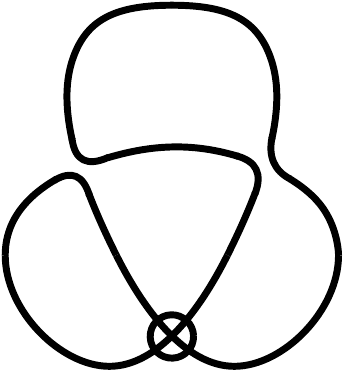}};
			
			\node[roundnode] (s1)at (0,1)  {
				\includegraphics[scale=0.35]{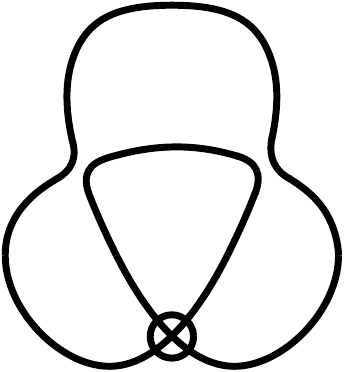}};
			
			\node[roundnode] (s2)at (0,-1)  {
				\includegraphics[scale=0.35]{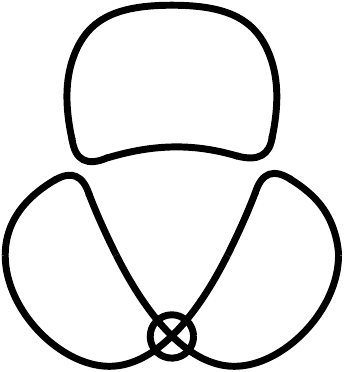}};
			
			\node[roundnode] (s3)at (2,0)  {
				\includegraphics[scale=0.35]{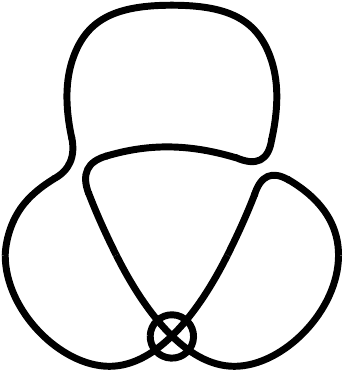}};
			
			\draw[->] (s0)--(s1) node[above,pos=0.5,thick]{\( \eta \)} ;
			
			\draw[->] (s0)--(s2) node[above,pos=0.5,thick]{\( \Delta \)} ;
			
			\draw[->] (s1)--(s3) node[above,pos=0.5,thick]{\( \eta \)} ;
			
			\draw[->] (s2)--(s3) node[above,pos=0.5,thick]{\( m \)} ;
			
			\end{tikzpicture}
		\end{centering}
		\caption{The problem face.}\label{Fig:problemface}
	\end{subfigure}
	\caption{}
\end{figure}

Thus classical Khovanov homology must be augmented in order to detect this face, if one wishes to assign \( \eta \) the zero map. This is the approach taken by Manutrov and subsequently Dye et al, and outlined in \Cref{Sec:DKKreview}. In \cite{Rushworth2017} another approach is taken: the module assigned to a circle within a smoothing is altered, allowing for \( \eta \) to be assigned a non-zero map while being grading preserving. The resulting theory is outlined in \Cref{Sec:doubledreview}.

\begin{remark}
	Tubbenhauer \cite{Tubbenhauer2014a} has constructed a virtual Khovanov homology theory in the manner of Bar-Natan \cite{Bar-natan2005} using non-orientable cobordisms, but there are compatibility issues with the theory presented in \cite{Dye2014}.
\end{remark}

\subsection{Review of MDKK homology}\label{Sec:DKKreview}
We review the construction of MDKK homology and the virtual Rasmussen invariant.

\subsubsection{The complex}

\begin{figure}
	\includegraphics[scale=1]{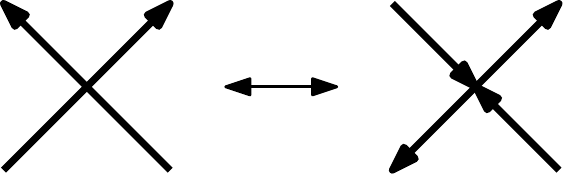}
	\caption{The source-sink decoration.}
	\label{Fig:sourcesink}
\end{figure}

Let \( \mathcal{A} = \mathcal{R}[X]/(X^2-t) \) for \( \mathcal{R} \) a commutative ring and \( t \in \mathcal{R} \). In order to detect the problem face a symmetry present in \( \mathcal{A} \) (which corresponds to the two possible orientations of \( S^1 \)) is exploited using the following automorphism:

\begin{definition}\label{barring} The \emph{barring operator} is the map
\begin{equation}
\overline{\phantom{X}} : \mathcal{A} \rightarrow \mathcal{A},~ X \mapsto -X.
\end{equation}
Applying the barring operator is referred to as \emph{conjugation}.\CloseDef
\end{definition}
Note that if \( \mathcal{R} = \mathbb{R} \) and \( t = -1 \) then \( \mathcal{A} = \mathbb{C} \) and the barring operator is just standard complex conjugation. How the barring operator is applied within the Khovanov complex is determined using an extra decoration on link diagrams, the source-sink decoration as depicted in \Cref{Fig:sourcesink}. A new diagram is formed by replacing the classical crossings with the source-sink decoration, which induces an orientation on the incident arcs of a crossing. Arcs of the diagram on which the induced orientations due to separate crossings disagree are marked by a \emph{cut locus}. We refer the reader to \cite{Dye2014}.

\subsubsection{The virtual Rasmussen invariant}
\label{Sec:lee}

There is a degeneration of Khovanov homology due to Lee \cite{Lee2005}. There is such a degeneration of MDKK homology also. Dye, Kaestner, and Kauffman use the methods of Bar-Natan and Morrison \cite{Bar-Natan2006} to show this. Specifically, they employ the Karoubi envelope of a category and the interpretation of virtual links as abstract links \cite{Carter2000, Kamada2000}, and define the virtual Rasmussen invariant.

As such diagrams are used extensively below, we describe the process given in \cite{Kamada2000} to obtain a (representative of an) abstract link from a (representative of a) virtual link (examples are given in  \Cref{Sec:cangen}). Let \( D \) be a diagram of a virtual link, as in \Cref{Fig:vknot}, then
\begin{enumerate}[(i)]
	\item About the classical crossings place a disc as shown in  \Cref{Fig:ASDCrossing}.
	\item About the virtual crossings place two discs as shown in \Cref{Fig:ASDVirtual}.
	\item Join up these discs with collars about the arcs of the diagram.
\end{enumerate}
The result is a knot diagram on a surface which deformation retracts onto the underlying curve of the diagram. We will denote abstract link diagrams by \( \left( F, D\right) \) for \( D \) a knot diagram and \( F \) a compact, oriented surface (which deformation retracts on to the underlying curve of \( D \)). We treat such diagrams up to stable equivalence, defined below.

\begin{figure}
	\includegraphics[scale=0.75]{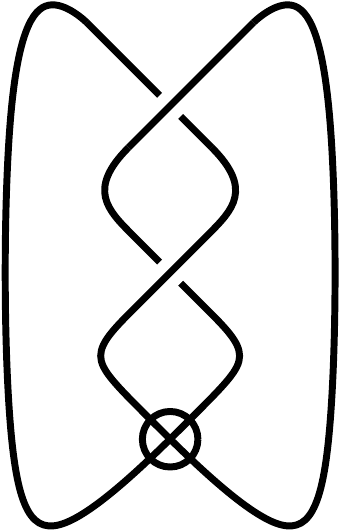}
	\caption{A two-crossing virtual knot diagram.}
	\label{Fig:vknot}
\end{figure}
\begin{figure}
	\includegraphics[scale=1]{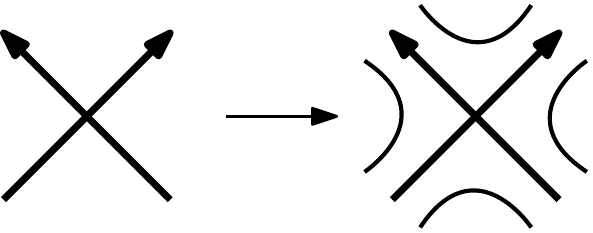}
	\caption{Component of the surface of an abstract link diagram about a classical crossing.}
	\label{Fig:ASDCrossing}
\end{figure}
\begin{figure}
	\includegraphics[scale=1]{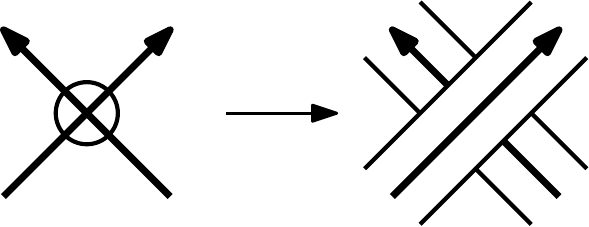}
	\caption{Component of the surface of an abstract link diagram about a virtual crossing.}
	\label{Fig:ASDVirtual}
\end{figure}
\begin{figure}
	\includegraphics[scale=1]{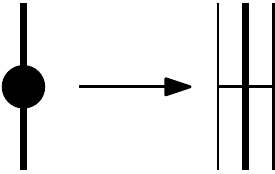}
	\caption{Cross cuts on an abstract link diagram inherited from cut loci.}
	\label{Fig:ALDCutLoci}
\end{figure}

\begin{definition}[Definition 3.2 of \cite{Carter2000}]
\label{Def:CKS1}
Let \( \left( F_1, D_1 \right) \) and \( \left( F_2, D_2 \right) \) be abstract link diagrams. We say that \( \left( F_1, D_1 \right) \) and \( \left( F_2, D_2 \right) \) are equivalent, denoted \( \left( F_1, D_1 \right) \leftrightsquigarrow \left( F_2, D_2 \right) \), if there exists a closed, connected, oriented surface \( F_3 \) and orientation-preserving embeddings \( f_1 : F_1 \rightarrow F_3 \), \( f_2 : F_2 \rightarrow F_3 \) such that \( f_1 ( D_1 ) \) and \( f_2 ( D_2 ) \) are related by Reidemeister moves on \( F_3 \). We say that two abstract link diagrams \( \left( F, D \right) \) and \( \left( F^{\prime}, D^{\prime} \right) \) are \emph{stably equivalent} if there is a chain of equivalences
\begin{equation*}
\left( F, D \right) = \left( F_0, D_0 \right) \leftrightsquigarrow \left( F_1, D_1 \right) \leftrightsquigarrow \dots \leftrightsquigarrow \left( F_n, D_n \right) = \left( F^{\prime}, D^{\prime} \right)
\end{equation*}
for \( n \in \mathbb{N} \).\CloseDef
\end{definition}

Stable equivalence classes of abstract link diagrams are in bijective correspondence to equivalence classes of virtual link diagrams \cite{Kamada2000}.

\begin{definition}
\label{Def:abstractsmoothing}
	A \emph{smoothing} of an abstract link diagram \( \left( F , D \right) \) is a diagram formed by smoothing the crossings of \( D \) into either their \( 0 \)- or \( 1 \)-resolution on \( F \). The result is a collection of disjoint copies of \( S^1 \) on the surface \( F \). A copy of \( S^1 \) is called a \emph{cycle}.\CloseDef
\end{definition}

The diagram-level canonical generators of the Lee complex given in \cite{Dye2014} are smoothings of abstract link diagrams with extra information added. This extra information keeps track of the source-sink structure of the virtual knot. The information is in the form of \textit{cross cuts} which are added in the following way: before beginning the procedure described above mark the virtual knot diagram with cut loci as inherited from the source-sink orientation and preserve them on the abstract link diagram. Replace each cut locus with a cross cut which bisects the surface as shown in \Cref{Fig:ALDCutLoci}. Henceforth by \textit{abstract link diagram} we mean an \textit{abstract link diagram with cross cuts}.

Using the source-sink decoration we add yet more information to abstract link diagrams in the form of a \textit{checkerboard colouring}:

\begin{definition}
\label{Def:checkerboard}
	From an abstract link diagram \( \left( F , D \right) \) form its associated \emph{checkerboard coloured} abstract link diagram from the surface and curve pair \( \left( F , S ( D ) \right) \) (where \( S ( D ) \) denotes the source-sink diagram formed by replacing each crossing by the source-sink decoration) by colouring the surface \( F \) using the recipe given in \Cref{Fig:checkerboard} and \Cref{Fig:checkerboardcutloci}.
	\begin{figure}
		\includegraphics[scale=1]{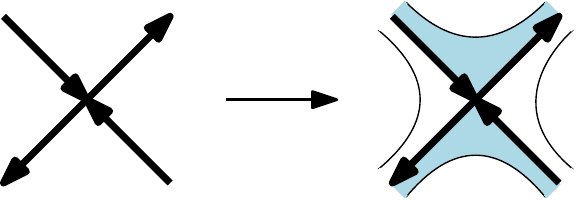}
		\caption{Checkerboard colouring at a crossing.}
		\label{Fig:checkerboard}
	\end{figure}
	\begin{figure}
		\includegraphics[scale=1]{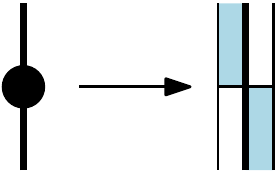}
		\caption{Checkerboard colouring at a cut locus.}
		\label{Fig:checkerboardcutloci}
	\end{figure}
	Notice that \Cref{Fig:checkerboard} allows us to induce a checkerboard colouring of smoothings of abstract link diagrams by simply joining the shaded or unshaded areas produced by smoothing the crossing.\CloseDef
\end{definition}

From checkerboard coloured smoothings of abstract link diagrams we are able to produce the tools used by Dye, Kaestner, and Kauffman to prove theorems analogous to those in \cite{Bar-Natan2006}. Henceforth we set \( \mathcal{R} = \mathbb{Q} \) and \( t = -1 \).

\begin{definition}
\label{Def:redgreen}
	 Let \( \lbrace r, g \rbrace \) be the basis for \( \mathcal{A} \) where
	 \begin{equation*}
	 \begin{aligned}
	 \text{``red"} &= r = \dfrac{1 + X}{2} \\
	 \text{``green"} &= g = \dfrac{1 - X}{2}.
	 \end{aligned}
	 \end{equation*}
	 On the level of diagrams, arcs of a smoothing are coloured red or green to denote which generator they are labelled with.\CloseDef
\end{definition}

The properties of \( r \) and \( g \) are listed in Lemma \( 4.1 \) of \cite{Dye2014}. The most important for our purposes is that \( r \) and \( g \) are conjugates with respect to the barring operator. That is
\begin{equation*}
\overline{r} = g ~ \text{and} ~ \overline{g} = r.
\end{equation*}

\begin{definition}[Analogue of Definition \(1.1\) of \cite{Bar-Natan2006}]
	\label{Def:alternatelycoloured}
	An \emph{alternately coloured smoothing of an abstract link diagram} is a smoothing for which the arcs have been coloured either red or green such that the arcs passing through each crossing neighbourhood are coloured different colours. At a cut locus the colouring of an arc switches.\CloseDef
\end{definition}

Using alternately coloured smoothings the following theorems are stated and proved:

\begin{theorem}[Theorem 4.2 of \cite{Dye2014}]
	\label{Thm:Dye1}
	Within the Karoubi envelope the Lee complex of a virtual link \( K \) is homotopy equivalent to a complex with one generator for each alternately coloured smoothing of \( K \) on an abstract link diagram with cross cuts and with vanishing differentials.
\end{theorem}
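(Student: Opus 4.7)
The plan is to follow the Karoubi envelope and delooping strategy of Bar-Natan and Morrison, transplanted to the abstract link diagram setting and made compatible with the source-sink decoration. Working in the Karoubi envelope of a suitable cobordism category of abstract link diagrams with cross cuts lets one formally split every smoothing into direct summands, which is what will eventually produce one generator per coloured smoothing.

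First, I would identify for every cycle in a smoothing a pair of commuting idempotent endomorphisms $p_r$ and $p_g$ satisfying $p_r + p_g = \mathrm{id}$ and $p_r p_g = 0$, realised as local dotted cobordisms in the style of Bar-Natan-Morrison; under the correspondence $r = (1+X)/2$, $g = (1-X)/2$ of \Cref{Def:redgreen} these should project onto the ``red'' and ``green'' summands of $\mathcal{A}$. Because $\mathcal{A}$ decomposes along these idempotents, the delooping $\text{circle} \cong (\text{circle},p_r) \oplus (\text{circle},p_g)$ in the Karoubi envelope rewrites every smoothing of \Cref{Def:abstractsmoothing} as a direct sum of coloured smoothings, giving a new complex whose generators are coloured (though not yet alternately coloured) smoothings.

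Next I would compute the induced saddle differentials between such coloured smoothings, locally at each classical crossing. The four colour combinations $(r,r)$, $(r,g)$, $(g,r)$, $(g,g)$ should behave much as in classical Lee theory (saddles between compatibly coloured arcs are isomorphisms, incompatibly coloured saddles vanish), while across each cut locus the barring operator of \Cref{barring} forces an $r \leftrightarrow g$ flip. Any coloured smoothing that fails to be alternately coloured in the sense of \Cref{Def:alternatelycoloured} therefore sits at the source or target of an isomorphism differential, and iterated Gaussian elimination through the cube of resolutions removes exactly the non-alternately coloured generators.

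The main obstacle, and the step I would expect to do most of the work, is verifying that the differentials of the reduced complex vanish. On an abstract link diagram any two alternately coloured smoothings that differ at a single crossing sit in a local picture where the alternating condition prescribes the colour pattern around that crossing, so the saddle map is forced to produce a generator with the ``wrong'' colouring somewhere and hence zero in the reduced complex; but because cut loci can be positioned arbitrarily along the arcs joining a crossing to the rest of the diagram, one has to check carefully that the conjugation conventions around every cut locus are compatible with the colour flips forced by each local saddle. Formalising this compatibility on abstract link diagrams rather than on an ambient thickened surface, and then invoking stable equivalence (\Cref{Def:CKS1}) to see that the conclusion is independent of chosen representative, is what turns the classical Bar-Natan-Morrison argument into its virtual analogue.
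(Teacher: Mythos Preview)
The present paper does not give its own proof of this theorem: it is quoted verbatim as Theorem~4.2 of \cite{Dye2014} in the review section (\Cref{Sec:lee}) and used as background. There is therefore no proof here to compare your proposal against. The paper does, however, say explicitly that Dye, Kaestner, and Kauffman ``use the methods of Bar-Natan and Morrison'' and ``employ the Karoubi envelope of a category and the interpretation of virtual links as abstract links,'' which is precisely the strategy you sketch: deloop each circle via the orthogonal idempotents $r = (1+X)/2$ and $g = (1-X)/2$ in the Karoubi envelope, observe that the Lee saddle is an isomorphism on same-coloured summands and zero on mixed ones, and Gaussian-eliminate until only alternately coloured smoothings survive. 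Your outline is consistent with that description and with the original argument in \cite{Dye2014}.

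One remark on your last paragraph. In the Bar-Natan--Morrison framework the vanishing of the residual differentials is not really a separate step requiring a global compatibility check across cut loci: the argument is local, crossing by crossing, and once every crossing has been processed the surviving summands are exactly those satisfying the alternating condition of \Cref{Def:alternatelycoloured} at every crossing neighbourhood, with no maps left between them. The cut loci enter only in that the barring operator swaps $r$ and $g$, so ``alternating'' has to be read relative to the cross-cut conventions; this is built into \Cref{Def:alternatelycoloured} rather than something to be verified afterwards. Your concern about stable equivalence is also not needed for the statement as phrased, since the theorem is about the Lee complex of a fixed diagram, and invariance is handled separately.
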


\begin{theorem}[Theorem 4.3 of \cite{Dye2014}]
	\label{Thm:Dye2}
	A virtual link \( K \) with \( | K | \) components has exactly \( 2^{|K|} \) alternately coloured smoothings on an abstract link diagram with cross cuts. These smoothings are in bijective correspondence with the \( 2^{|K|} \) orientations of \( K \).
\end{theorem}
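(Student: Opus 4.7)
The plan is to exhibit an explicit bijection $\Phi$ from orientations of $K$ to alternately coloured smoothings on an abstract link diagram with cross cuts. Given an orientation $\mathfrak{o}$ of $K$, I would first form the oriented smoothing $D_{\mathfrak{o}}$ by resolving each classical crossing of $D$ in the way that respects $\mathfrak{o}$ (i.e.\ the resolution whose emergent arcs are both oriented consistently with $\mathfrak{o}$). This produces a disjoint collection of cycles on the abstract surface $F$, inheriting the cross cuts from the cut loci of $D$.

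Next, I would colour the arcs of $D_{\mathfrak{o}}$ using $\mathfrak{o}$ together with the source-sink decoration: fix a convention (say, colour an arc red at points where $\mathfrak{o}$ agrees with the source-sink induced arrow, and green otherwise). The two local facts to verify are: (a) at each classical crossing, the two arcs through the crossing neighbourhood receive opposite colours; and (b) the colour switches across each cross cut. Claim (b) is immediate because the source-sink direction reverses by definition at a cut locus while $\mathfrak{o}$ does not, so the agreement toggles. Claim (a) follows from a crossing-by-crossing check: in the oriented resolution the two emergent arcs connect a source strand to a sink strand in such a way that their source-sink agreements with $\mathfrak{o}$ are opposite, so they receive opposite colours.

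For the inverse, given an alternately coloured smoothing I would recover an orientation by reading each arc's colour as a direction (red meaning agreement with, green disagreement from, the source-sink arrow) and patching across cross cuts using the colour-switching rule. The alternately coloured condition at each crossing then forces the underlying smoothing to be the oriented resolution for the orientation just built, so the inverse map is well defined. Counting proceeds per component: each of the $|K|$ components admits two orientation choices, and swapping red and green on the arcs of a single component produces the alternately coloured smoothing of the orientation reversed on that component, giving $2^{|K|}$ alternately coloured smoothings in total, in bijective correspondence with the $2^{|K|}$ orientations of $K$.

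The hard part will be the careful local bookkeeping at the interaction between the source-sink decoration (which is prescribed on $D$ but is not, in general, induced from a global orientation of $K$) and the colouring of arcs (which is globally coherent by the alternating rule). Cross cuts exist precisely to absorb the obstruction to globalising source-sink, so the verifications of (a), (b), and their converses must be made case by case at each crossing sign and each cut locus; once this local consistency is established, the global count and the bijection follow formally.
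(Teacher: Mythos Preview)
The paper does not itself prove this theorem; it is quoted from \cite{Dye2014}. What the paper does do (at the start of \Cref{Sec:cangen}) is spell out the forward direction of the bijection for the purpose of building chain-level generators: form the oriented smoothing on the checkerboard coloured abstract link diagram, put the clockwise boundary orientation on the shaded regions, and colour an arc red or green according to whether that boundary orientation agrees or disagrees with the given orientation \(o\). Your rule --- colour by agreement of \(o\) with the source-sink arrow --- is the same recipe in different clothing, since the checkerboard shading is manufactured from the source-sink decoration (\Cref{Def:checkerboard}) and the clockwise boundary orientation of the shaded regions recovers the source-sink direction along each arc. So your approach and the paper's description of the bijection coincide; your proposal simply goes further in sketching the inverse and the local verifications that the paper leaves to \cite{Dye2014}.

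One small correction to your final paragraph: the sentence ``swapping red and green on the arcs of a single component produces the alternately coloured smoothing of the orientation reversed on that component'' is only literally correct when that component has no classical crossings with the other components. At a mixed crossing, reversing the orientation on just one of the two strands changes which resolution is the oriented one, so the underlying smoothing itself changes, not merely the colours. This does not damage the argument --- the count \(2^{|K|}\) follows directly from the bijection with orientations --- but the picture you describe is special to split components (and in particular to knots).
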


In \Cref{Sec:cangen} we describe the bijective correspondence of \Cref{Thm:Dye2}, but we conclude this section by stating the definition of the virtual Rasmussen invariant and its properties.

\begin{definition}
	\label{Def:virtualRasmussen}
	Let \( K \) be a virtual knot diagram, \( \vckh ' ( K )\) and \( \vkh ' ( K ) \) the associated Lee complex and Lee homology, respectively. Let \( s \) be the grading on \( \vkh ' ( K ) \) induced by \( j \) on \( \vckh ' ( K ) \). Define
	\begin{equation*}
	\begin{aligned}
	s_{min} ( K ) &= \min \lbrace s ( x ) | x \in \vkh ' ( K ),~ x \neq 0 \rbrace \\
	s_{max} ( K ) &= \max \lbrace s ( x ) | x \in \vkh ' ( K ),~ x \neq 0 \rbrace.
	\end{aligned}
	\end{equation*}
	The virtual Rasmussen invariant of \( K \) is
	\begin{equation*}
	s ( K ) = \frac{1}{2} \left( s_{max} + s_{min} \right).
	\end{equation*}\CloseDef
\end{definition}

\begin{proposition}[Parts of Proposition \( 6.5 \) and Theorem \( 5.6 \) of \cite{Dye2014}]
	\label{Prop:Rasproperties}
		The virtual Rasmussen invariant satisfies the following
		\begin{enumerate}
			\item \( s ( K ) = s_{max} - 1 = s_{min} + 1 \).
			\item \( s ( \overline{K} ) = - s ( K ) \), for \( \overline{K} \) the mirror image of \( K \): the diagram formed by switching all positive classical crossings to negative classical crossings and vice versa.
			\item \( | s ( K ) | \leq 2 g^\ast ( K ) \), where \( g^\ast ( K ) \) denotes the slice genus of \( K \).
		\end{enumerate}
	
\end{proposition}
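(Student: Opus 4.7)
\medskip

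\noindent\textbf{Proof plan.} The plan is to mimic Rasmussen's proof of the classical analogue \cite{Rasmussen2010}, using the diagrammatic description of the Lee-type generators supplied by \Cref{Thm:Dye1,Thm:Dye2} to supply the combinatorial input.

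For part $(1)$, since \(K\) is a virtual knot we have \(|K|=1\), and so \Cref{Thm:Dye2} produces exactly two alternately coloured smoothings on the abstract link diagram of \(K\), corresponding to the two orientations of \(K\). By \Cref{Thm:Dye1} these yield two homology classes \(\sg_+, \sg_- \in \vkh'(K)\) whose images generate \(\vkh'(K)\). First I would compute the quantum (\(j\)-)grading of each of \(\sg_+\) and \(\sg_-\) on the oriented resolution of \(D\): each is a tensor product of \(r\)'s and \(g\)'s over the Seifert circles, and a direct computation (identical in form to Lee's classical computation) gives \(j(\sg_+) = j(\sg_-)\) on that resolution. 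The induced filtration grading \(s\) picks up a shift from the cube of resolutions, and a careful bookkeeping shows that \(s(\sg_+) - s(\sg_-) = \pm 2\) precisely because \(\sg_+\) and \(\sg_-\) differ by a global application of the barring operator \(\overline{\phantom{X}}\), which interchanges \(r\leftrightarrow g\). From this, \(s_{\max} - s_{\min} = 2\) and the definition of \(s(K)\) gives \(s(K) = s_{\max}-1 = s_{\min}+1\).

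For part $(2)$, I would establish a chain-level isomorphism \(\vckh'(\overline{K}) \cong \vckh'(K)^\vee\) that reverses the homological and quantum gradings, as in the classical case. The mirror operation switches \(0\)- and \(1\)-resolutions at every classical crossing and reverses the sign of the writhe correction, so that the cube for \(\overline{K}\) is obtained from that for \(K\) by dualising. This duality sends the two Lee generators of \(K\) to the two Lee generators of \(\overline{K}\) with quantum gradings negated, yielding \(s_{\max}(\overline{K}) = -s_{\min}(K)\) and vice versa; averaging gives \(s(\overline{K}) = -s(K)\).

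For part $(3)$, the strategy is the standard cobordism argument. Given a connected oriented cobordism \(S \hookrightarrow M \times I\) of genus \(g\) from \(K\) to the unknot \(U\), I would decompose it (via the first definition of slice genus from \Cref{Sec:virtualcobordism}) into a sequence of births, deaths, oriented saddles, and virtual Reidemeister moves. Each Reidemeister move induces a quantum-degree \(0\) isomorphism on \(\vkh'\); each birth/death/saddle induces a map of quantum filtration degree \(-1\); and handle stabilisations of the ambient \(3\)-manifold do not alter the Lee homology (this is the content of the stabilisation invariance baked into the construction of \cite{Dye2014}). Tracking the Lee generator \(\sg_+\) along the cobordism, and using the non-triviality of the induced map on \(\vkh'(U)\), I would conclude \(|s(K) - s(U)| \le -\chi(S) = 2g\), and since \(s(U) = 0\) by direct computation this yields \(|s(K)| \le 2g^\ast(K)\).

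The main obstacle will be part $(3)$: concordance-invariance requires the Lee-type argument to survive the new ingredients of virtual cobordism, namely the single-cycle smoothings of \Cref{Fig:121} that may appear in the trace of a saddle on an abstract link diagram, and the handle stabilisations of the ambient \(3\)-manifold. Verifying that the map induced by each elementary cobordism piece lands in the correct filtration level, and that the Karoubi-envelope identifications of \Cref{Thm:Dye1} are natural under these moves, is the technical heart of the argument. Part $(1)$ is combinatorial once the generators are in hand, and part $(2)$ reduces to a grading chase.
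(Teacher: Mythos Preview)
The paper does not supply its own proof of this proposition: it is stated as a citation of results from \cite{Dye2014} (Proposition~6.5 and Theorem~5.6 there), and no argument is given in the present paper. So there is no in-paper proof to compare against; your plan is effectively an attempt to reconstruct the argument of \cite{Dye2014} along Rasmussen's classical lines, which is the right idea in outline.

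That said, your sketch of part~(1) contains a genuine error. You assert that ``a careful bookkeeping shows that \(s(\sg_+) - s(\sg_-) = \pm 2\)''. This is false: both canonical generators \(\sg_o\) and \(\sg_{\overline{o}}\) live in the \emph{same} state (the oriented resolution) and are related by a global application of the barring operator, which does not change the \(j\)-grading; hence \(s(\sg_o) = s(\sg_{\overline{o}})\), and in fact both equal \(s_{\min}(K)\) (this is exactly \Cref{Lem:ras} and \Cref{smin} later in the paper). The gap of \(2\) between \(s_{\max}\) and \(s_{\min}\) does \emph{not} arise from comparing \(\sg_o\) with \(\sg_{\overline{o}}\), but from comparing the linear combinations \(\sg_o + \sg_{\overline{o}}\) and \(\sg_o - \sg_{\overline{o}}\): these lie in the two distinct \(\bmod\,4\) summands of \(\vkh'(K)\), which forces their induced filtration gradings to differ. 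One then needs a separate argument (as in Rasmussen, using a cobordism to the unknot or the short exact sequence coming from merging with an unknot) to pin the difference down to exactly~\(2\). Your parts~(2) and~(3) are broadly on the right track, and you correctly flag the genuinely delicate point in~(3): checking that the elementary-cobordism maps, including those arising from single-cycle smoothings, respect the filtration as required.
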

Notice that the virtual Rasmussen invariant lacks the out-of-the-box additivity of its classical counterpart (a consequence of the ill-defined nature of the connect sum operation on virtual knots). In \Cref{Sec:additive} we show, however, that the virtual \( s \) invariant is indeed additive.

\subsection{Doubled Khovanov homology}\label{Sec:doubledreview}
We review doubled Khovanov homology and the doubled Rasmussen invariant.

\subsubsection{Construction}
Doubled Khovanov homology provides an alternative extension of Khovanov homology to virtual links \cite{Rushworth2017}. The problem face is dealt with by ``doubling up'' the module assigned to a smoothing; this allows the map assigned to the single-cycle smoothing to be non-zero.

\begin{figure}
	\begin{tikzpicture}[scale=0.8]
	
	\node[] (s0)at (-5,0)  {
		\(\begin{matrix}
		0 \\
		v_+ \\
		0 \\
		v_-
		\end{matrix}\)};
	
	\node[] (s1)at (-3,0)  {
		\(\begin{matrix}
		v_+ \\
		0 \\
		v_- \\
		0
		\end{matrix}\)};
	
	\draw[->,thick] (s0)--(s1) node[above,pos=0.5]{\( \eta \)} ;
	
	\node[] (s2)at (3,0)  {
		\(\begin{matrix}
		0 \\
		\vup \\
		\vlp \\
		\vum \\
		\vlm
		\end{matrix}\)};
	
	\node[] (s3)at (5,0)  {
		\(\begin{matrix}
		\vup \\
		\vlp \\
		\vum \\
		\vlm \\
		0
		\end{matrix}\)};
	
	\draw[->,thick] (s2)--(s3) node[above,pos=0.5]{\( \eta \)} ;
	
	\end{tikzpicture}
	\caption{On the left, the complex of to the single-cycle smoothing if one assigns \( \mathcal{A} \) to a cycle. On the right, the complex of the single-cycle smoothing if one assigns \( \mathcal{A} \oplus \mathcal{A} \lbrace -1 \rbrace \) to a cycle. The generators are arranged vertically by quantum grading.}
	\label{Fig:doublingup}
\end{figure}

A schematic picture of this ``doubling up'' process is given in \Cref{Fig:doublingup}; the left hand complex depicts the situation when the module \( \mathcal{A} \) is assigned to a cycle within a smoothing. One sees immediately that the \( \eta \) map must be zero if it is to be degree-preserving. This is path followed by Manturov and Dye et al, and outlined in the previous section. The right hand complex, however, depicts the situation arrived at if one assigns the module \( \mathcal{A} \oplus \mathcal{A} \lbrace -1 \rbrace \) to a cyle, where \( \mathcal{A} = \langle \vup, \vum \rangle \) and \( \mathcal{A} \lbrace -1 \rbrace = \langle \vlp, \vlm \rangle \) (the superscripts are u for ``upper'' and l for ``lower''). This allows for \( \eta \) to be non-zero and degree preserving.

Given a virtual link diagram, \( D \), the complex \( \cdkh ( D ) \) is formed in the usual way: form the cube of resolutions of \( D \), then assign modules to the vertices and maps to the edges. The module assigned to a smoothing of \( j \) cycles is \( \mathcal{A}^{\otimes j} \oplus  \mathcal{A}^{\otimes j} \lbrace -1 \rbrace \). The maps constituting the differential are as follows. The \(m\) and \( \Delta \) maps are effectively unchanged:
	\begin{equation*}
		\label{Eq:diffcomp}
		\begin{aligned}
			m( v^{\text{\emph{u/l}}}_+ \otimes v^{\text{\emph{u/l}}}_+ ) & = v^{\text{\emph{u/l}}}_+ \qquad &\Delta ( v^{\text{\emph{u/l}}}_+ ) & = v^{\text{\emph{u/l}}}_+ \otimes v^{\text{\emph{u/l}}}_-  + v^{\text{\emph{u/l}}}_- \otimes v^{\text{\emph{u/l}}}_+  \\
			m( v^{\text{\emph{u/l}}}_+ \otimes v^{\text{\emph{u/l}}}_- ) &= m( v^{\text{\emph{u/l}}}_- \otimes v^{\text{\emph{u/l}}}_+ ) = v^{\text{\emph{u/l}}}_- \qquad &\Delta  ( v^{\text{\emph{u/l}}}_- ) & = v^{\text{\emph{u/l}}}_- \otimes v^{\text{\emph{u/l}}}_- \\
			m(v^{\text{\emph{u/l}}}_- \otimes v^{\text{\emph{u/l}}}_- ) & = 0 & & 
		\end{aligned}
	\end{equation*}
(notice that they do not map between the upper and lower summands). The \( \eta \) map associated to the single cycle smoothing as in \Cref{Fig:121} is given by
	\begin{equation*}
		\label{Eq:etamap}
		\begin{aligned}
			\eta ( v^{\text{\emph{u}}}_+ ) & = v^{\text{\emph{l}}}_+ \qquad & \eta ( v^{\text{\emph{l}}}_+ ) & = 2 v^{\text{\emph{u}}}_- \\
			\eta ( v^{\text{\emph{u}}}_- ) & = v^{\text{\emph{l}}}_- \qquad & \eta ( v^{\text{\emph{l}}}_- ) & = 0.
		\end{aligned}
	\end{equation*}
We denote by \( \dkh ( L ) \) the homology of the complex \( \cdkh ( D ) \), where \( L \) is the link represented by \( D \). We refer the reader to \cite{Rushworth2017}.

\subsubsection{The doubled Rasmussen invariant}
As in classical Khovanov and MDKK theories there is a perturbation of doubled Khovanov homology produced by adding a term of degree \(+4\) to the differential. As in the other cases, this perturbation allows the definition of a concordance invariant. In this section we give the essentials we require for \Cref{Subsec:doubledbounds}, for full details we refer the reader to \cite[Section \(4\)]{Rushworth2017}.

Given a virtual link diagram, \( D \), let \( \cdkh ' ( D ) \) denote the complex with the chain spaces of \( \cdkh ( D ) \) but with altered differential. The homology of \( \cdkh ' ( D ) \) is an invariant of the link represented by \( D \), and is denoted \( \dkh ' ( L ) \) (where \( L \) is the link represented by \( D \)). The complex \( \cdkh ' ( D ) \) is refered to as the doubled Lee complex, and the homology as the doubled Lee homology.

The rank of doubled Lee homology of a link depends on the number of alternately coloured smoothings the link possesses - here we mean the usual notion of alternately coloured smoothing, rather than the augmented notion of alternately coloured smoothings on abstract link diagrams used in \Cref{Sec:DKKreview}. Unlike classical links, virtual links may posesses no alternately coloured smoothings. (In fact, one of the purposes of the extra decoration applied to diagrams in the construction of MDKK homology is to ensure that the oriented smoothing of the augmented diagrams is always alternately colourable.)

\begin{theorem}[Theorem \(3.5\) of \cite{Rushworth2017}]
	\label{Thm:leerank}
	Given a virtual link \( L \)
	\begin{equation*}
	\text{rank} \left( \dkh' ( L ) \right) = 2 \left| \left\{ \text{alternately coloured smoothings of}~ L \right\} \right |.
	\end{equation*}
\end{theorem}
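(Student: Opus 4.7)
The plan is to mimic Lee's classical proof that \(\text{rank}(\kh'(L)) = 2^{|L|}\), using the doubling of the cycle-module to supply the extra factor of \(2\). Lee's argument proceeds by passing from the basis \(\{v_+, v_-\}\) of the deformed Frobenius algebra \(\Q[X]/(X^2-1)\) to the orthogonal idempotents \(r = (1+X)/2\) and \(g = (1-X)/2\); in the new basis both \(m'\) and \(\Delta'\) are diagonal, and the canonical Lee generators are tensor products of \(r\)'s and \(g\)'s placed on the cycles of a smoothing according to a consistent alternate colouring.

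First I would carry out the analogous change of basis on each summand of the doubled module, setting
\[
r^{\text{u/l}} = \tfrac{1}{2}(\vup + \vum)^{\text{u/l}}, \qquad g^{\text{u/l}} = \tfrac{1}{2}(\vup - \vum)^{\text{u/l}},
\]
and work out the action of the perturbed maps \(m'\), \(\Delta'\), and \(\eta'\) in the new basis. Because \(m\) and \(\Delta\) respect the upper/lower decomposition, they restrict on each summand to the usual Lee formulas — \(m'(r \otimes r) = r\), \(m'(g \otimes g) = g\), \(m'(r \otimes g) = 0\), together with a diagonal \(\Delta'\) — so orthogonality of \(r^{\text{u/l}}\) and \(g^{\text{u/l}}\) kills every \(m'\)- and \(\Delta'\)-contribution whenever the arcs meeting at a classical crossing carry opposite colours. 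The essential new computation is the action of \(\eta'\) on \(\{r^{\text{u/l}}, g^{\text{u/l}}\}\); using the formulas \(\eta(\vup) = \vlp\), \(\eta(\vlp) = 2\vum\), \(\eta(\vum) = \vlm\), \(\eta(\vlm) = 0\) (and the perturbation thereof), one checks that \(\eta'\) sends an idempotent in the upper summand to the matching-colour idempotent in the lower summand and vice-versa up to a scalar.

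With these formulas in hand, for each alternately coloured smoothing \(S\) of a diagram \(D\) of \(L\) I would define two canonical cycles \(\sg^{\text{u}}_S,\, \sg^{\text{l}}_S \in \cdkh'(D)\) by labelling each arc of \(S\) with the generator \(r^{\text{u/l}}\) or \(g^{\text{u/l}}\) of the appropriate colour, placed at the oriented vertex of the cube of resolutions. The alternating property and the computations above imply that these elements are cycles, and a direct argument analogous to the classical case shows they are linearly independent in \(\dkh'(L)\); this gives the lower bound of \(2\,|\{\text{alternately coloured smoothings of } L\}|\) on the rank. For the matching upper bound I would apply Gaussian elimination to \(\cdkh'(D)\) in the basis \(\{r^{\text{u/l}}, g^{\text{u/l}}\}\), iteratively cancelling pairs of basis elements connected by non-zero matrix entries of \(d\); a smoothing that fails the alternate-colouring condition has, at some crossing, two arcs of matching colour for which one of \(m'\), \(\Delta'\), or \(\eta'\) is non-zero, so every non-canonical basis element is cancelled in pairs, leaving precisely the \(2\,|\{\text{alt.\ coloured smoothings}\}|\) canonical generators.

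The principal obstacle is the \(\eta'\) map, which has no classical analogue and is the only part of the differential mixing the upper and lower summands. One must show simultaneously that \(\eta'\) annihilates each canonical generator \(\sg^{\text{u/l}}_S\) (so these really are cycles), and that it provides enough boundary relations at one-to-one bifurcations to kill the non-canonical basis elements coming from smoothings which are not alternately colourable. The factor of \(2\) in the statement arises directly from the doubling of the cycle-module: every alternately coloured smoothing contributes one canonical generator in the upper summand and one in the lower, and although \(\eta'\) connects the two summands it does not identify the corresponding homology classes.
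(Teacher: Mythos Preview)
This theorem is not proved in the present paper: it is quoted verbatim from \cite{Rushworth2017} and used as input. There is therefore no proof here against which to compare yours.

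That said, your outline is the natural adaptation of Lee's argument to the doubled setting and is, in broad strokes, how the result is established in the cited reference: pass to the idempotent basis, check that the perturbed maps diagonalise, exhibit two canonical cycles per alternately coloured smoothing, and cancel everything else by Gaussian elimination. Two points deserve tightening. First, a genuine slip: you place the canonical generators ``at the oriented vertex of the cube of resolutions'', but in the doubled theory the alternately coloured smoothing is \emph{not} in general the oriented smoothing --- odd crossings are resolved the other way (cf.\ \Cref{Prop:unoriented} and the discussion of \(s_2\)). The canonical generators live at homological height \(s_2(L)\), not at height \(0\). Second, you are right that \(\eta'\) is the crux, but your treatment of it is too breezy. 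The unperturbed \(\eta\) is \emph{not} diagonal in the idempotent basis on the lower summand: from the formulae in the paper one finds \(\eta(r^{\text{l}}) = \eta(g^{\text{l}}) = v^{\text{u}}_-\). It is only after adding the degree-\(+4\) perturbation term to \(\eta\) that the map becomes colour-preserving (sending \(r^{\text{l}} \mapsto 2r^{\text{u}}\), \(g^{\text{l}} \mapsto -2g^{\text{u}}\), say), and you should write this out rather than assert it. Once that computation is in hand, your observation that at an alternately coloured smoothing the two arcs at every crossing lie on distinct cycles --- so the outgoing components of the differential are all \(m'\), never \(\eta'\) --- is exactly what makes the canonical elements cycles, and the Gaussian elimination goes through.
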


Further, given a diagram \( D \) of a virtual link \( L \), each alternately coloured smoothing, \( \mathscr{S} \), (if any exist) defines two generators of \( \dkh ' ( L ) \), denoted \( \sg^\text{u} \) and \( \sg^\text{l} \) and known as an \emph{alternately coloured generators}.

A virtual knot has two alternately coloured smoothings \cite[Theorem \(3.12\)]{Rushworth2017} so that its doubled Lee homology is of rank \( 4 \). The four generators of the homology lie in a single homological degree, and the quantum grading of any one of them determines that of the others \cite[Lemma \(4.2\)]{Rushworth2017}. Thus, for a virtual knot, \( K \), the information contained in \( \dkh ' ( K ) \) is equivalent to a pair of integers.

\begin{definition}[Definition \( 4.5 \) of \cite{Rushworth2017}]
	For a virtual knot \( K \) the \emph{doubled Rasmussen invariant} is denoted \( \mathbbm{s} ( K ) = ( s_1 ( K ), s_2 ( K ) ) \in \mathbb{Z} \times \mathbb{Z} \), where \( s_1 ( K ) \) is equivalent to the highest non-trivial quantum degree of \( \dkh ' ( K ) \), and \( s_2 ( K ) \) is the single non-trivial homological degree of \( \dkh ' ( K ) \).
\end{definition}

The component \( s_2 ( K ) \) is easy to compute from any diagram, \( D \), of \( K \): it is the height of the alternately coloured smoothings of \( D \). It is also equal to the odd writhe of \( K \) (see \cite[Proposition \( 4.11 \)]{Rushworth2017}).

\subsection{Even knots}\label{Subsec:evenknots}
To conclude this section we give a class of virtual knots for which the two extensions of the Rasmussen invariant are equal.

Recall the definition of an even virtual knot given in \Cref{Subsec:results}; here prove a fact about the cube of resolutions associated to even virtual knot diagrams.

\begin{proposition}
	\label{Prop:noeta}
	Let \( D \) be an even virtual knot diagram. Then \( vCKh ( D ) \) and \( CDKh ( D ) \) contain no \( \eta \) maps.
\end{proposition}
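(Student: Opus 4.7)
The aim is to show that the cube of resolutions of $D$ contains no single-cycle (one-to-one) bifurcations, since $\eta$ maps are assigned in $vCKh(D)$ and $CDKh(D)$ only to edges of this type. First I will analyse the oriented smoothing $S_o$ of $D$. Because $D$ is even, $S_o$ coincides with the alternately colourable smoothing, so the cycles of $S_o$ admit a 2-colouring (red/green) with the property that at every classical crossing of $D$ the two semicircles of $S_o$ lie on cycles of different colours. In particular, every crossing is visited by two \emph{distinct} cycles in $S_o$, so every edge of the cube incident to $S_o$ is a merge bifurcation, and so the claim holds at $S_o$ itself.

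To propagate the claim to the rest of the cube, I fix each arc of $D$ with the colour it inherits from $S_o$. At a crossing where a general smoothing $S$ agrees with $S_o$ the two semicircles are monochromatic, while at a crossing where $S$ disagrees with $S_o$ the semicircles join arcs of different colours. Tracking the direction of a cycle of $S$ relative to the arc orientations of $D$, a monochromatic semicircle preserves direction while a colour-mixing semicircle reverses it. With this framework I reduce the classification of the bifurcation at a crossing $c$ visited twice by a single cycle $C$ of $S$ to the parity of direction reversals along each of the two arcs into which the two visits of $c$ divide $C$: an odd parity places the cyclic order of the four arcs at $c$ in the \emph{unlinked} equivalence class (yielding a split), while an even parity places it in the \emph{linked} class (yielding the one-to-one bifurcation).

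The remaining step, and the technical heart of the proof, is to show that in an even diagram the arcs of $C$ between the two visits of every such $c$ always contain an odd number of colour-mixing semicircles. For this I appeal to Proposition~4.11 of \cite{Rushworth2017}, which identifies the alternately-colourable definition of evenness with the Gauss-code parity definition: every crossing of $D$ has an even number of chords interlacing it. Translating interlacings of chords into interlacings of visits along cycles in the cube, the evenness of $c$ forces the number of flipped crossings interlaced with $c$ along $C$ to have the parity required to land in the unlinked class, ruling out the one-to-one bifurcation. The main obstacle will be making this translation precise, as it requires carefully relating the chord-diagram combinatorics behind the Gauss-code notion of parity to the cycle-theoretic bookkeeping of direction reversals in an arbitrary smoothing of the cube.
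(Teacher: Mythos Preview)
Your proposal has the right object in hand but does not use it in the way that actually finishes the argument, and the ``main obstacle'' you defer is not a technicality but the entire content of the proof.

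The two-colouring you place on the arcs of $D$ is precisely the data of a global source-sink orientation on $D$ (this is exactly what the equivalence of the two definitions of evenness says). The paper's observation is that the source-sink decoration at a crossing is compatible with \emph{both} resolutions of that crossing: whichever way you smooth, each of the two local strands joins one ``in'' arc to one ``out'' arc and is therefore consistently oriented. Since the source-sink orientation is global (no cut loci, because $D$ is even), every cycle in every smoothing of $D$ inherits a coherent orientation. Once that is in place, the single-cycle bifurcation is excluded by a purely local check at the crossing: if a single oriented cycle passes through both strands of a resolved crossing, the in/out pattern forces the re-resolution to produce two oriented cycles (a split), never one. No parity count, no reference back to the Gauss code, is needed.

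By contrast, you track the \emph{knot} orientation along cycles of an arbitrary smoothing $S$, count direction reversals, and then propose to match this count to interlacing numbers of $c$ in the Gauss diagram. But the number of flipped crossings met along an arc of a cycle $C\subset S$ between two visits of $c$ depends on $S$ and on $C$, not just on $c$; the evenness of $c$ in the Gauss code is a statement about the knot's single cycle, not about $C$. Making that translation precise would require exactly the global compatibility that the source-sink orientation already gives you for free. So the step you flag as the remaining obstacle is not a routine bookkeeping exercise; if you try to carry it out you will find yourself reconstructing the source-sink argument. I would recommend replacing the direction-reversal framework entirely: observe that your arc-colouring orients every cycle of every smoothing, and then rule out the one-to-one bifurcation locally as above.
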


\begin{proof}
	As \( D \) is even it possesses a global source-sink orientation i.e.\ applying the source-sink decoration does not yield any cut loci. (In fact, possessing a global source-sink structure is equivalent to being even, but here we only need one direction.) To see this orient \( D \) with either of it's orientations (the usual notion of orientation, not source sink), and consider leaving a classical crossing of \( D \) and returning to the arc proscribed by the usual orientation. One sees from \Cref{Fig:sourcesink} that passing through a classical crossing reverses the source-sink orientation. As all classical crossings of \( D \) are even, one passes through an even number of crossings between leaving and returning at the proscribed arc. Thus the source-sink orientation has been reversed an even number of times, yielding no overall change. This argument can be applied at every crossing to show that \( D \) has a global source-sink orientation.
	
	Next, notice that every smoothing of \( D \) inherits an orientation from the global source-sink orientation of \( D \): looking again at \Cref{Fig:sourcesink} one sees that both resolutions of the classical crossing inherit an orientation from the source-sink decoration. That the orientation inherited is consistent between distinct classical crossings of \( D \) follows from that fact that \( D \) has no cut loci.
	
	Finally, we notice that if every smoothing of \( D \) inherits a coherent orientation from the global source-sink orientation of \( D \) then every cycle within a smoothing must look as in the left or center of \Cref{Fig:smoothingorientation}, as the configuration on the right is prohibited for reasons of (source-sink) orientation. But we see that the configurations on the left and center correspond to either a merge or a split, while the configuration on the right corresponds to the single-cycle smoothing. Thus no single-cycle smoothings can occur in the cube of resolutions of \( D \) and we arrive at the desired result.
\end{proof}

\begin{figure}
	\includegraphics[scale=0.75]{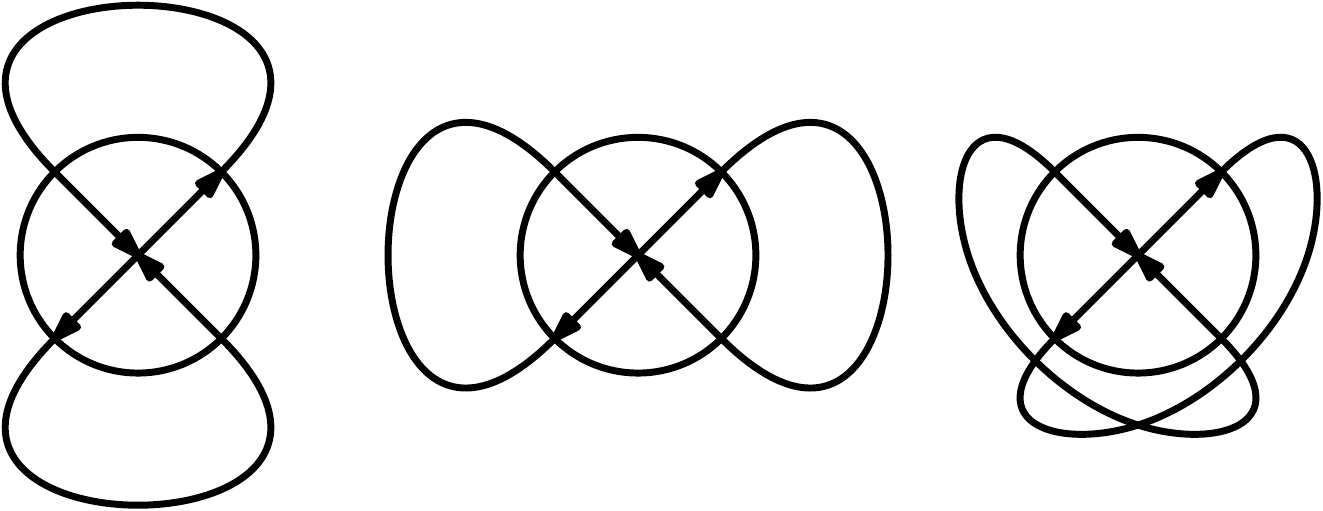}
	\caption{Configurations of cycles within a smoothing of a diagram possessing a global source-sink orientation. Two possible configurations are at the left and center, while an impossible configuration is at the right.}
	\label{Fig:smoothingorientation}
\end{figure}

\begin{corollary}
	\label{Cor:evenknots}
	Let \( K \) be an even virtual knot. Then \( DKh ( K ) = vKh ( K ) \oplus vKh ( K ) \lbrace -1 \rbrace \) so that \( s ( K ) = s_1 ( K ) \).
\end{corollary}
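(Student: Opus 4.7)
The plan is to leverage \Cref{Prop:noeta} to decompose the doubled Khovanov complex of an even diagram as a direct sum of two shifted copies of the MDKK complex at the chain level, then transport this decomposition through the Lee perturbations to obtain the equality of $s$-invariants.

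Let $D$ be an even diagram of $K$. First I would compare chain-level data: at a smoothing with $j$ cycles, $\vckh$ assigns $\mathcal{A}^{\otimes j}$, whereas $\cdkh$ assigns $\mathcal{A}^{\otimes j} \oplus \mathcal{A}^{\otimes j}\{-1\}$. Inspecting the edge maps shows that $m$ and $\Delta$ preserve this upper/lower splitting and act identically to their MDKK counterparts on each summand, while $\eta$ is the only edge map that mixes the two summands. By \Cref{Prop:noeta}, no $\eta$ maps appear in either complex; moreover, the argument establishing that proposition supplies the stronger fact that $D$ has a global source-sink orientation with no cut loci, so the barring operator never twists the MDKK differential. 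Together these facts force the differential on $\cdkh(D)$ to be block-diagonal with respect to the upper/lower splitting, with each block coinciding with the MDKK differential on $\vckh(D)$. This yields a chain-level isomorphism $\cdkh(D) \cong \vckh(D) \oplus \vckh(D)\{-1\}$, and passing to homology gives the claimed decomposition $\dkh(K) \cong \vkh(K) \oplus \vkh(K)\{-1\}$.

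The same argument carries over verbatim to the Lee perturbations, since the perturbing terms on both sides are built from the $m$ and $\Delta$ maps and introduce no new $\eta$-type contributions, so that $\cdkh'(D) \cong \vckh'(D) \oplus \vckh'(D)\{-1\}$. To deduce $s(K) = s_1(K)$, I would then track the quantum gradings of the canonical Lee generators through this splitting: by \Cref{Prop:Rasproperties} the two MDKK generators sit at quantum gradings $s(K) + 1$ and $s(K) - 1$, so the four doubled generators lie at $s(K) + 1,\ s(K) - 1$ (from the upper summand) and $s(K),\ s(K) - 2$ (from the lower summand). The highest non-trivial quantum grading of $\dkh'(K)$ is therefore $s(K) + 1$, which by the convention defining $s_1$ identifies it with $s(K)$.

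The main obstacle is the first reduction: one needs to confirm that the absence of $\eta$ maps together with the absence of cut loci really does identify the upper summand of $\cdkh(D)$ with $\vckh(D)$ itself, rather than with some twisted or re-shifted variant. This boils down to a careful comparison of the sign and grading conventions used in \cite{Dye2014} and \cite{Rushworth2017} at the vertices and edges of the cube of resolutions, but since evenness eliminates precisely the features that distinguish these theories on the upper summand, the identification should be routine.
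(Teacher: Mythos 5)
Your proof takes essentially the same route as the paper's: invoke \Cref{Prop:noeta} to kill the $\eta$ maps, observe that $m$ and $\Delta$ respect the upper/lower splitting, and conclude the chain-level decomposition $\cdkh(D) \cong \vckh(D) \oplus \vckh(D)\{-1\}$. You go further than the paper in two ways that are worth noting: you explicitly flag that the absence of cut loci (established inside the proof of \Cref{Prop:noeta}) is needed to ensure the barring operator never enters the $\vckh$ differential, so that the upper summand really is $\vckh(D)$ on the nose; and you spell out the quantum-grading bookkeeping for the four doubled Lee generators that the paper leaves implicit in deducing $s(K) = s_1(K)$. Both additions are correct and fill small gaps the paper's terse proof elides.
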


\begin{proof}
	Let \( D \) be an even diagram of \( K \). Then both \( vCKh ( D ) \) and \( CDKh ( D ) \) contain no \( \eta \) maps by \Cref{Prop:noeta}. As \( m \) and \( \Delta \) do not map between the shifted and unshifted summands of \( CDKh ( D ) \), the complex splits as the direct sum \( CDKh ( D ) = vCKh ( D ) \oplus vCKh ( D ) \lbrace -1 \rbrace \).
\end{proof}

\section{Chain-level generators of \( vKh ' \)}\label{Sec:cangen}
\begin{figure}
	\includegraphics[scale=0.4]{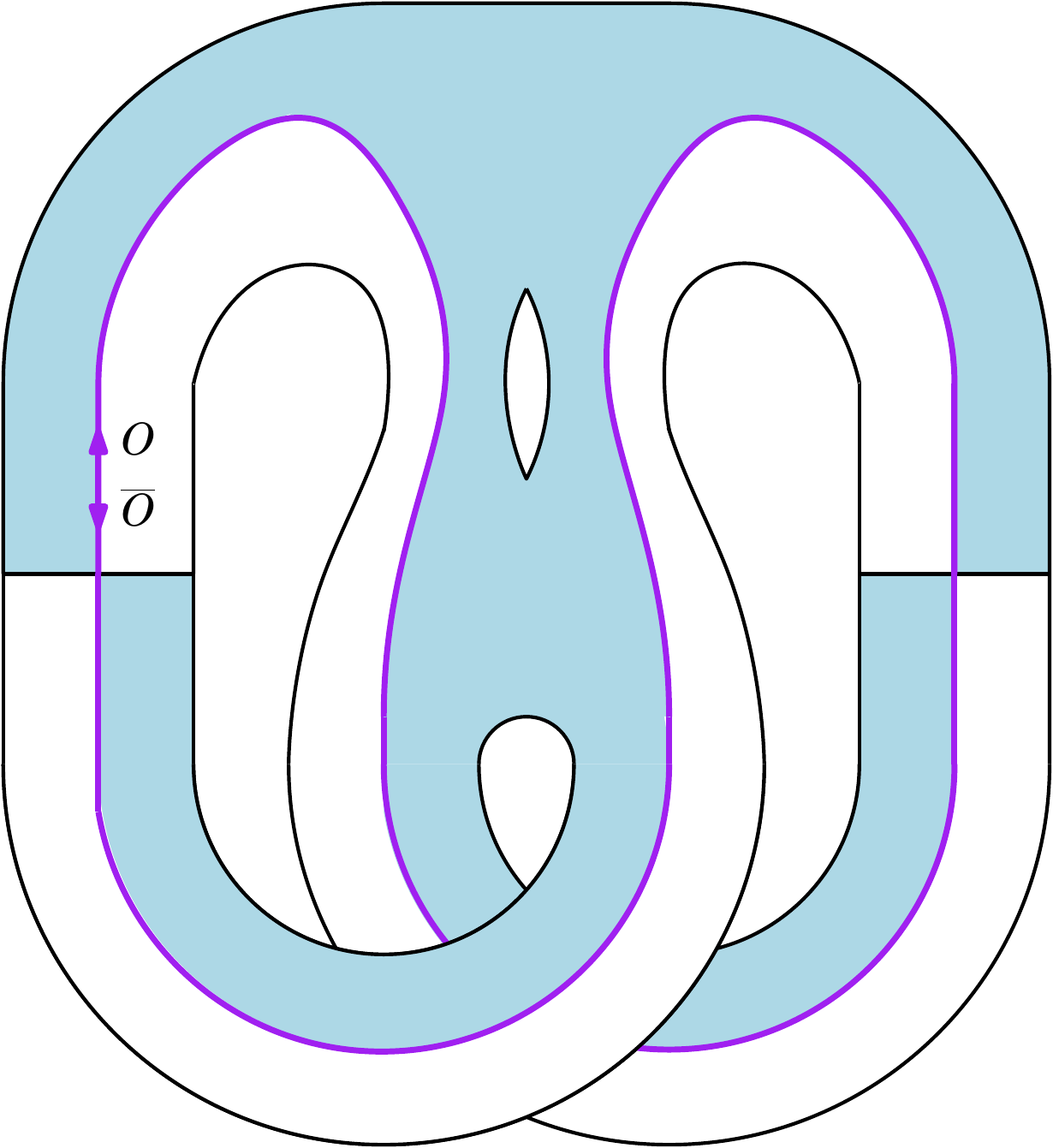}
	\caption{A checkboard coloured abstract link diagram corresponding to the virtual knot diagram of \Cref{Fig:vknot}, with orientations \( o \) and \( \overline{o} \).}
	\label{Fig:cb_K}
\end{figure}
\begin{figure}
	\includegraphics[scale=0.4]{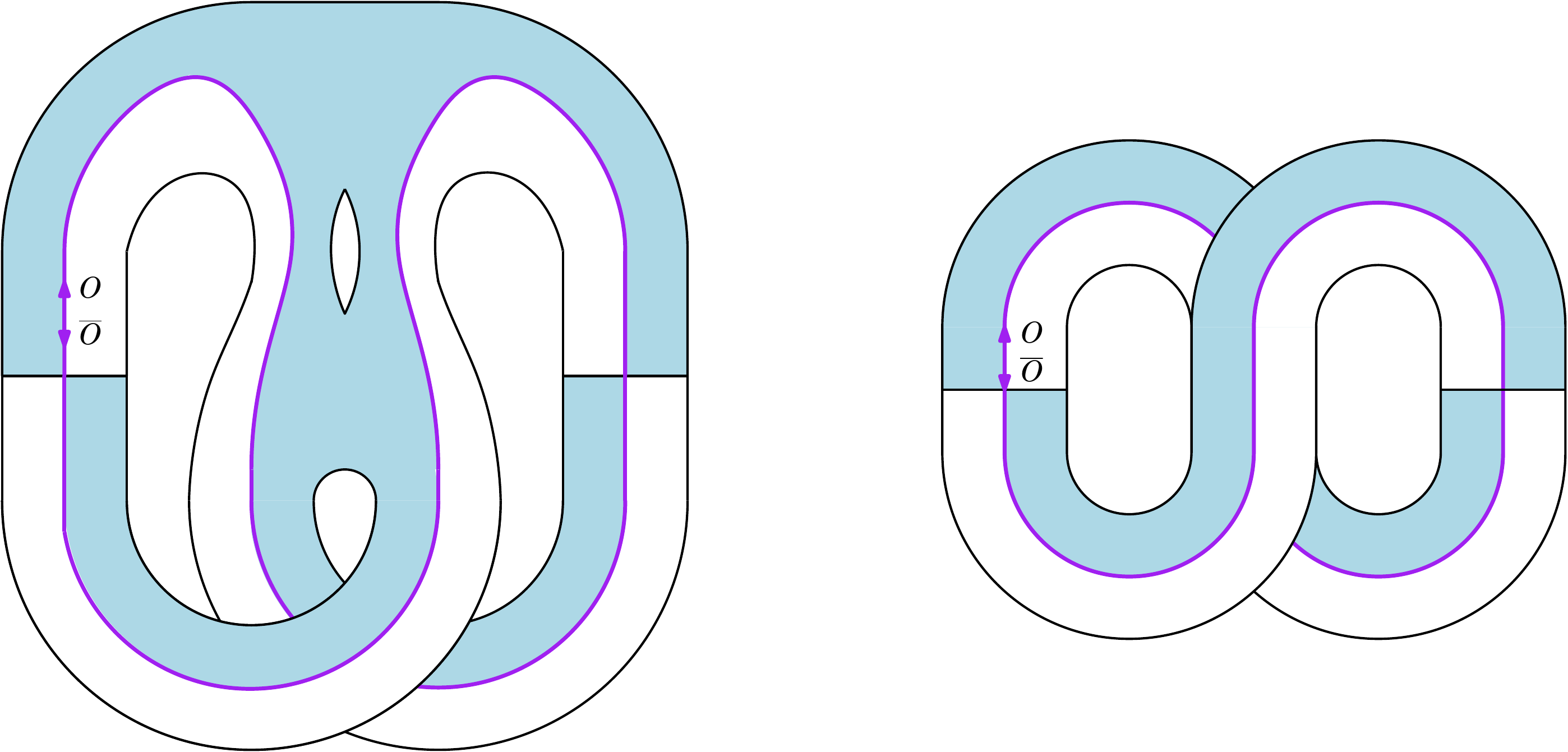}
	\caption{Two representatives of the stable equivalence class of smoothings of the checkboard coloured abstract link diagram depicted in \Cref{Fig:cb_K}, with orientations  \( o \) and \( \overline{o} \).}
	\label{Fig:cb_smooth}
\end{figure}
\begin{figure}
	\centering
	\begin{subfigure}[b]{0.4\textwidth}
		\begin{center}
			\includegraphics[scale=0.35]{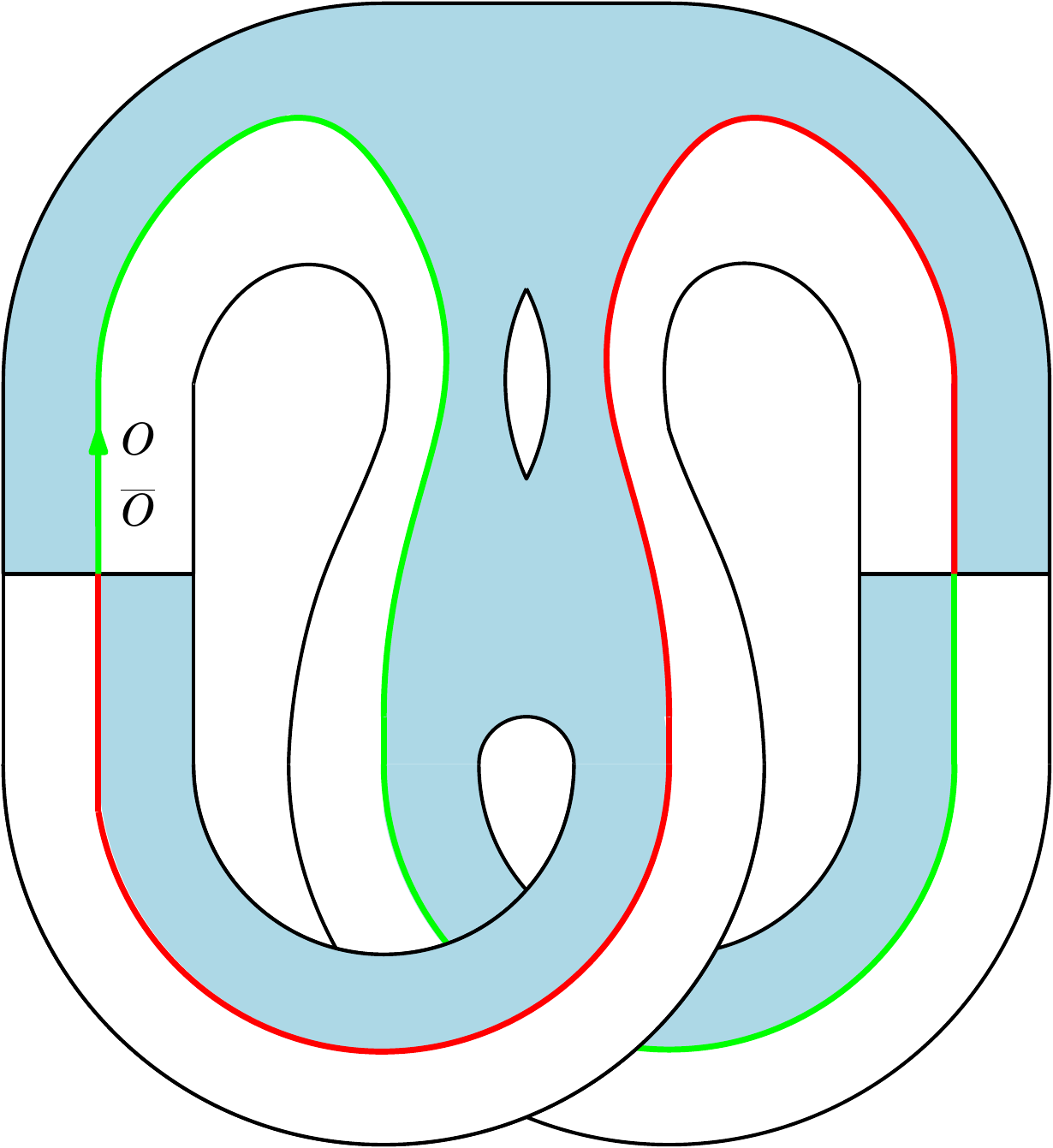}
			\caption{The alternately coloured smoothing associated to orientation \( o \).}
			\label{Fig:ac_smoothing_o}
		\end{center}
	\end{subfigure}
	~
	\begin{subfigure}[b]{0.4\textwidth}
		\begin{center}
			\includegraphics[scale=0.35]{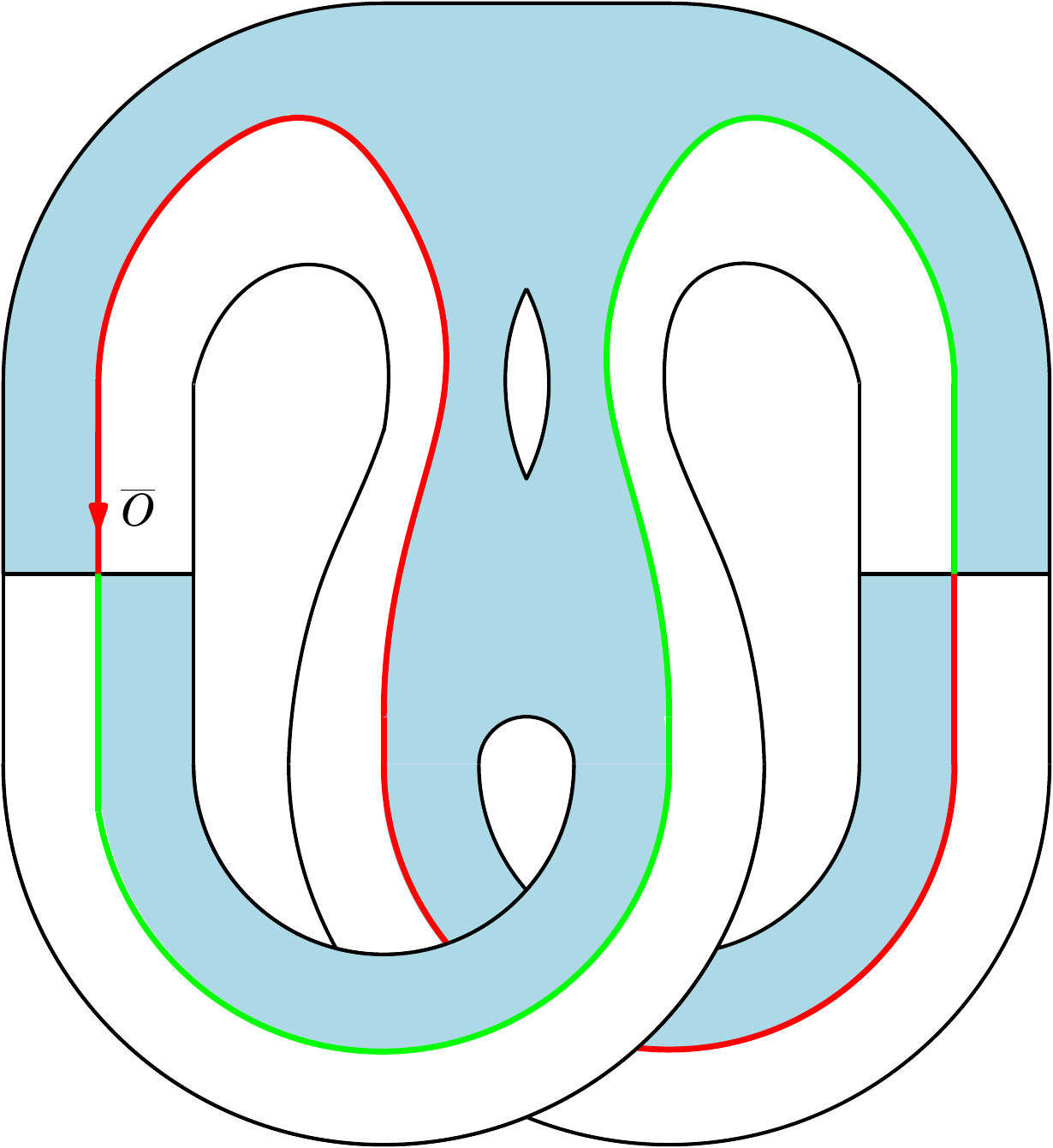}
			\caption{The alternately coloured smoothing associated to orientation \( \overline{o} \).}
			\label{Fig:ac_smoothing_obar}
		\end{center}
	\end{subfigure}
	\caption{The alternately coloured smoothings on abstract link diagrams corresponding to the two possible orientations of the virtual knot diagram.}\label{Fig:alternatesmoothings}
\end{figure}

\begin{figure}
	\centering
	\begin{subfigure}[b]{0.4\textwidth}
		\begin{center}
			\includegraphics[scale=0.4]{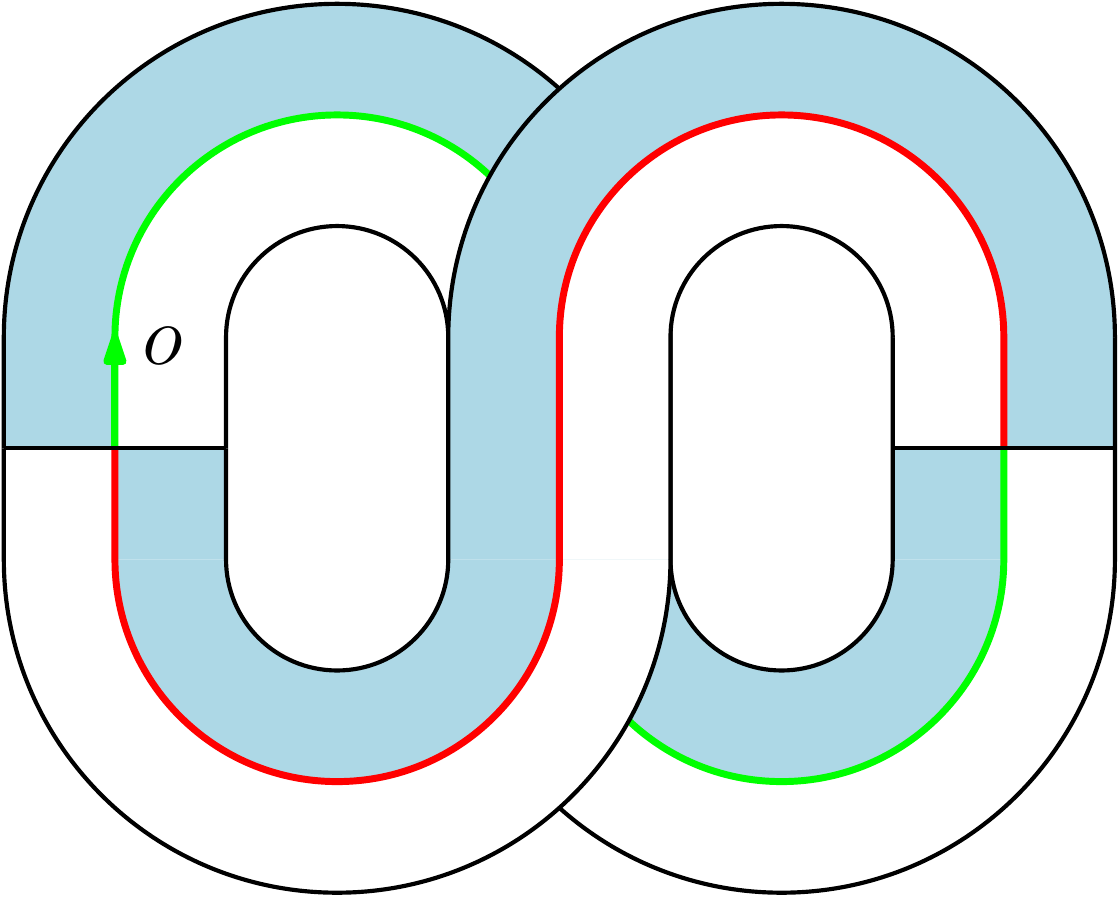}
			\caption{A smoothing stably equivalent to that of \Cref{Fig:ac_smoothing_o}.}
			\label{Fig:stable_smoothing_o}
		\end{center}
	\end{subfigure}
	~
	\begin{subfigure}[b]{0.4\textwidth}
		\begin{center}
			\includegraphics[scale=0.4]{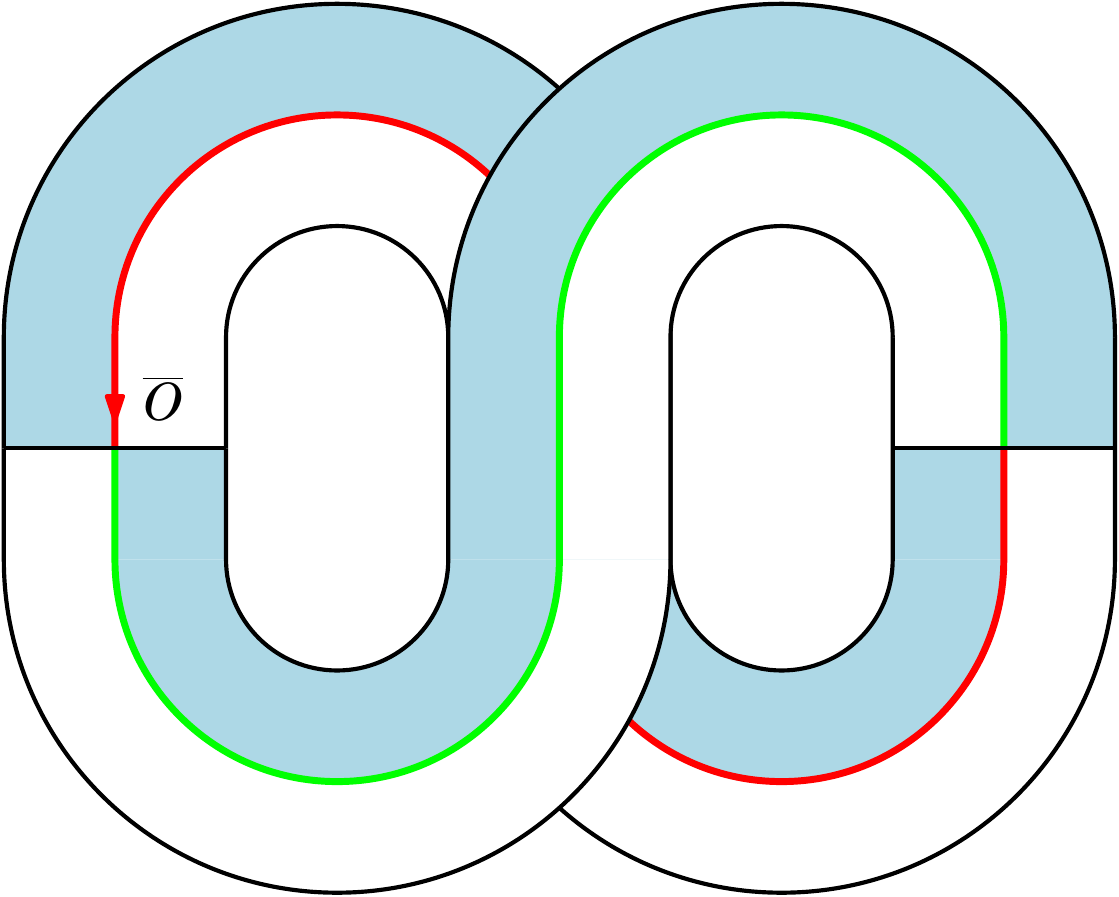}
			\caption{A smoothing stably equivalent to that of \Cref{Fig:ac_smoothing_obar}.}
			\label{Fig:stable_smoothing_obar}
		\end{center}
	\end{subfigure}
	\caption{Alternately coloured smoothings stably equivalent to those of \Cref{Fig:alternatesmoothings}.}\label{Fig:stablesmoothings}
\end{figure}

In \cite{Dye2014} canonical generators are produced at a diagrammatic level i.e.\ they are alternately coloured smoothings of (checkerboard-coloured) abstract link diagrams. These generators are sufficient to prove Theorems \ref{Thm:Dye1} and \ref{Thm:Dye2}. Below, we give a method to produce the corresponding chain-level generators of \( \vkh' ( K )\). Before doing so, however, it is instructive to recall the bijection of \Cref{Thm:Dye2} between orientations of a virtual link and alternately coloured smoothings of the associated abstract link diagram as given in \cite{Dye2014}. We use \Cref{Fig:vknot} as an example.
\begin{enumerate}[(i)]
	\item Given a virtual link diagram \( D \) construct the checkerboard coloured abstract link diagram as in \Cref{Def:checkerboard}. Note that for a virtual knot the checkerboard colouring is independent of the orientation, a consequence of the invariance of the source-sink decoration under \( 180^{\circ} \) rotations. See \Cref{Fig:cb_K}.
	\item For a given orientation \( o \) of \( D \) form the corresponding oriented smoothing on the checkerboard coloured abstract link diagram as in \Cref{Def:abstractsmoothing}. See \Cref{Fig:cb_smooth}.
	\item\label{part3} Place a clockwise orientation on the shaded regions of the oriented smoothing, which in turn induces a new orientation on the arcs of the smoothing. On each arc compare this orientation to that induced by \( o \). If these two orientations agree colour the arc red, if they disagree colour the arc green (as in \Cref{Def:redgreen}). See \Cref{Fig:alternatesmoothings}.
\end{enumerate}
At this stage we have produced alternately coloured smoothings on abstract link diagrams as in \Cref{Def:alternatelycoloured}. We need a way of reading off from these diagrams elements of \( {\vckh '}_0 ( K ) \) (as the oriented resolution is always at height \( 0 \)), which will be the chain-level canonical generators of \( \vkh ' ( K ) \). We are unable to do so at this point as the cycles of the alternately coloured smoothings possess more than one colour. We now describe a process by which single coloured smoothings can be produced, and hence chain-level generators of \( \vkh ' ( K ) \).

Firstly, we utilise the stable equivalence relation given in \Cref{Def:CKS1} to work with alternately coloured smoothings of abstract link diagrams for which the surface deformation retracts onto the curve of the smoothing, for example the abstract link diagrams given in \Cref{Fig:stablesmoothings}. We can always do this as the curve of the smoothing is simply a disjoint union of copies of \( S^1 \). Note that the resulting smoothing (of a checkerboard coloured abstract link diagram) may not be connected.

Next, we interpret the cross cuts as half-twists with the parity of the twist ignored. That is
\[
	\raisebox{-22.5pt}{\includegraphics[scale=1]{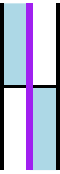}} \quad = \raisebox{-26.5pt}{\includegraphics[scale=1]{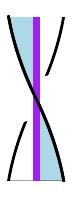}} ~\text{or equivalently}~ \raisebox{-26.5pt}{\includegraphics[scale=1]{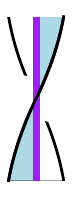}}.
\]
The author learnt of this interpretation in the talks of Dye and of Kaestner during Special Session 35, ``Low Dimensional Topology and Its Relationships with Physics'', of the 2015 AMS/EMS/SPM Joint Meeting.

Replacing cross cuts with appropriate half-twists we are able to view the surface of the smoothing (of a checkerboard coloured abstract link diagram) as a two-sided surface such that the curve of the smoothing appears on both sides. That cross cuts always come in pairs ensures that the surface has two sides. Importantly, on each side of the surface the curve of the smoothing is coloured exactly one colour. This is because passing a cross cut causes the arc to change to change colour (c.f.\ \Cref{Def:alternatelycoloured}), and to pass a cut locus is to pass onto the other side of the surface. (From this one can see that passing a cut locus, or equivalently moving on to the other side of the surface, is replicated in  \( \mathcal{A} \) by applying the barring operator.)

In summary, we view alternately coloured smoothings (of checkerboard coloured abstract link diagrams) such as those in \Cref{Fig:stablesmoothings} as two sided surfaces such that the curve of the smoothing is coloured exactly one colour on each side. At this point it is clear that in order to read off generators of \( {\vckh '}_0 ( K ) \) from such alternately coloured smoothings we must make a choice of side (or sides, if the surface of the smoothing is disconnected) of the surface to read. Further, we must also ensure that this choice is the same for both the alternately coloured smoothings associated to \( o \) and \( \overline{o} \). We must have this as they are both coloured versions of the same smoothing of an abstract link diagram (the oriented smoothing) c.f. the left hand smoothing of \Cref{Fig:cb_smooth} with \Cref{Fig:alternatesmoothings}. In effect we are making the choice on this uncoloured smoothing, which the alternately coloured smoothings then inherit.

With all this in mind, let us make a choice: given a virtual knot diagram \( K \) with orientations \( o \) and \( \overline{o} \), let \( A \) denote the oriented smoothing of the checkerboard coloured abstract link diagram associated to \( K \). On \( A \) cancel an arbitrary pair of adjacent cross cuts against one another so that the strand they bound is removed. An example is given in \Cref{Fig:cancellation}. This cancellation of cross cuts is simply `flipping' the segment of the surface they bound so that the other side of the surface is shown. Continue cancelling available arbitrary pairs of cross cuts until all have been removed. In our interpretation, that the smoothing has no cross cuts means that we are looking at exactly one side of surface. Now return to part (iii) of the process given on page \pageref{part3}, and colour the cycles of the oriented smoothings associated to \( o \) and \( \overline{o} \) as dictated there. Denote by \( A_o \) and \( A_{\overline{o}} \) the resulting alternately coloured abstract link diagrams associated to \( o \) and \( \overline{o} \), respectively. That the cycles of \( A_o \) and \( A_{\overline{o}} \) are coloured with opposite colours follows from the fact that their orientations are opposite but the checkerboard colouring of \( A_o \) and \( A_{\overline{o}} \) is the same.

Examples of such single coloured smoothings are given in \Cref{Fig:cancelledo} and \Cref{Fig:cancelledobar}. In this case a choice of top and bottom is equivalent to picking either the two smoothings on the left of the Figures, or the two on the right.

\begin{figure}
	\includegraphics[scale=1]{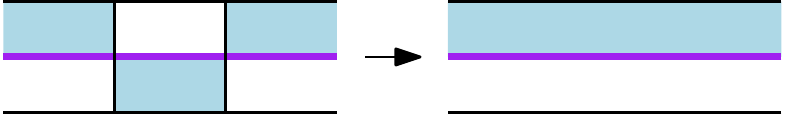}
	\caption{Removing a strand by cancelling cross-cuts.}
	\label{Fig:cancellation}
\end{figure}

\begin{figure}
	\includegraphics[scale=0.4]{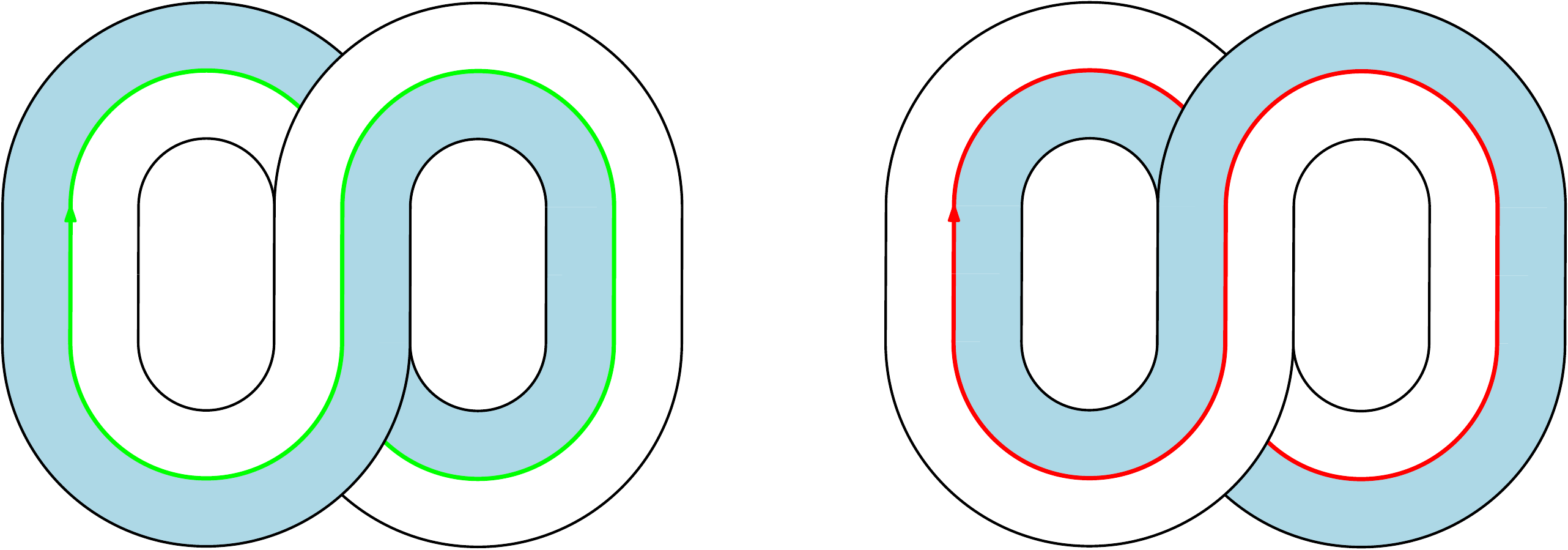}
	\caption{The possible ways to cancel the alternately coloured smoothing corresponding to orientation \( o \) of \( K \).}
	\label{Fig:cancelledo}
\end{figure}
\begin{figure}
	\includegraphics[scale=0.4]{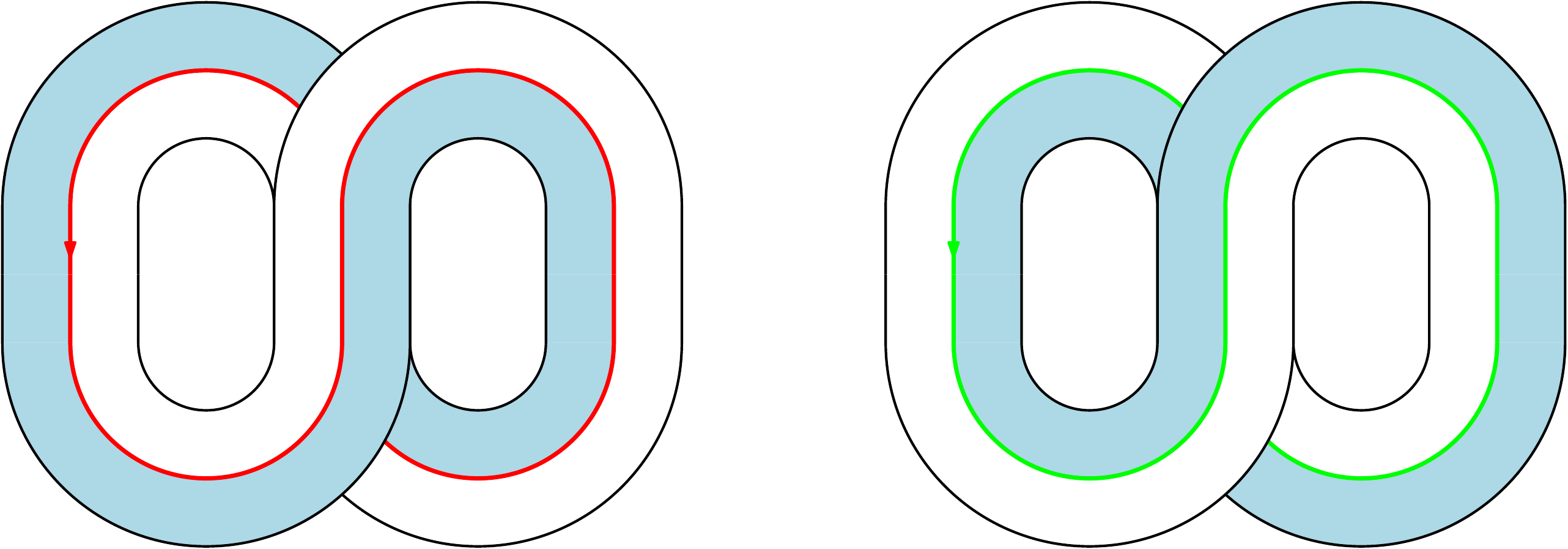}
	\caption{The possible ways to cancel the alternately coloured smoothing corresponding to orientation \( \overline{o} \) of \( K \).}
	\label{Fig:cancelledobar}
\end{figure}

After all that we are left with smoothings of abstract link diagrams the cycles of which are coloured with exactly one colour, either red or green. We form the canonical generators of \( \vkh ' ( K ) \), denoted \( \sg_o \) for \( o \) an orientation of \( K \), by taking the appropriate tensor product of \( r \) and \( g \) as dictated by the colours of the cycles. In this way we obtain two distinct algebraic generators.

We conclude by remarking that the \( s \) invariant is independent of this choice of which side of the surface to read. Making another choice results in an application of the barring operator to one or more tensor factors of \( \sg_o \) and \( \sg_{\overline{o}} \), because if a cycle is coloured green on one side of the surface it must be coloured red on the other. But conjugation does not interact with the filtration, that is
\begin{equation*}
j(\overline{r}) = j (g)~\text{and}~j(\overline{g}) = j (r).
\end{equation*}

To conclude this section we prove a Lemma analogous to Lemma \( 3.5 \) of Rasmussen \cite{Rasmussen2010} which we will use in both the following sections.

\begin{lemma}
	\label{Lem:ras}
	Let \( n \) be the number of components of \( K \).
	There is a direct sum decomposition \( \vkh'(K) \cong \vkh'_o(K) \oplus
	\vkh'_e(K) \), where \( \vkh'_o (K) \) is generated by all states with
	\( q \)-grading conguent to \( 2+n \mod{4} \), and \( \vkh'_e (K) \) is generated
	by all states with \( q \)-grading congruent to \( n \mod{4} \).
	If \( o \) is an orientation on \( K \), then \( \sg_o + \sg_{\overline{o}} \) is contained in one of the two
	summands, and \( \sg_o - \sg_{\overline{o}} \) is contained in the other.
\end{lemma}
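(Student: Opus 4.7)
The proof adapts Rasmussen's argument for the classical case. The perturbed Lee differential on \( \vckh'(K) \) splits as \( d + \Phi \), where \( d \) is the original MDKK Khovanov differential (preserving the \( q \)-filtration) and \( \Phi \) is the degree \( +4 \) Lee-type perturbation. Both components preserve \( q \)-grading modulo \( 4 \), so the \( \mathbb{Z}/4 \)-grading descends to \( \vkh'(K) \) and yields an \emph{a priori} four-summand decomposition.

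The next step is to show that only the summands in \( q \)-gradings \( \equiv n \) and \( \equiv n+2 \pmod{4} \) are non-zero. By \Cref{Thm:Dye1,Thm:Dye2}, \( \vkh'(K) \) is spanned by the canonical generators \( \sg_o \), all of which live at the oriented resolution. There a basis state with \( |\epsilon| \) factors of \( X \) (and the remainder \( 1 \)'s) has \( q \)-grading \( s_D + n_+ - n_- - 2|\epsilon| \), where \( s_D \) is the number of cycles in the oriented resolution of \( D \). The standard Seifert-algorithm parity identity \( s_D + n_+ - n_- \equiv n \pmod{2} \)---which carries over to the virtual setting by working on the abstract link diagram, where the oriented resolution is a disjoint union of circles---forces \( q \equiv n \pmod{2} \) for every basis state supporting some \( \sg_o \). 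Consequently \( \vkh'(K) \) is concentrated in the two summands \( \vkh'_e(K) \) (the \( q \equiv n \pmod{4} \) piece) and \( \vkh'_o(K) \) (the \( q \equiv n+2 \pmod{4} \) piece).

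Finally, to place \( \sg_o \pm \sg_{\overline{o}} \) we use the diagrammatic description of the canonical generators from \Cref{Sec:cangen}. Reversing the orientation from \( o \) to \( \overline{o} \) reverses the induced orientation on every arc of the oriented smoothing, so each cycle labelled \( r \) is relabelled \( g \) and vice versa. Since \( \overline{r} = g \) and \( \overline{g} = r \), this is precisely the action of the barring operator tensor-factor-wise. Expanding \( \sg_o = \sum_\epsilon \alpha_\epsilon X^\epsilon \) in the \( \{1, X\} \)-basis and applying \( \overline{X} = -X \) gives \( \sg_{\overline{o}} = \sum_\epsilon (-1)^{|\epsilon|} \alpha_\epsilon X^\epsilon \), so
\begin{equation*}
\sg_o + \sg_{\overline{o}} \;=\; 2 \!\!\sum_{|\epsilon|~\text{even}} \!\!\alpha_\epsilon X^\epsilon, \qquad \sg_o - \sg_{\overline{o}} \;=\; 2 \!\!\sum_{|\epsilon|~\text{odd}} \!\!\alpha_\epsilon X^\epsilon.
\end{equation*}
Because \( q \) drops by \( 2 \) each time \( |\epsilon| \) grows by \( 1 \), the two combinations have \( q \)-gradings differing by \( 2 \pmod{4} \), landing in opposite summands of the decomposition.

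The main obstacle I anticipate is the parity identity \( s_D + n_+ - n_- \equiv n \pmod{2} \) in the virtual setting: one must verify that the classical Seifert-circle parity argument survives the presence of virtual crossings. Passing to the abstract link diagram makes this essentially a statement about Euler characteristic of the associated immersed Seifert surface on that surface, and the argument transfers with only cosmetic changes; after that, everything reduces to the algebraic bookkeeping above.
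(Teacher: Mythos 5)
Your proof is correct and follows essentially the same route as the paper: the paper also establishes the second statement by identifying \( \sg_{\overline{o}} \) with the image of \( \sg_o \) under the barring operator applied tensor-factor-wise, packaging the \( (-1)^{|\epsilon|} \) computation you do explicitly in the \( \{1,X\} \)-basis via an auxiliary grading on \( \mathcal{A} \) (with \( X \) in degree \( 2 \) and \( 1 \) in degree \( 4 \)) and a map \( \iota \) acting by \( \pm 1 \) on the two summands. The paper dispatches the first statement simply as ``follows exactly as in the classical case'', so your fuller treatment---including the Seifert-circle parity check you flag as the potential obstacle, which does carry over because the canonical generators live at the oriented resolution of the abstract link diagram with cut loci---is a reasonable and correct elaboration of what the paper leaves implicit.
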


\begin{proof}
	The first statement follows exactly as in the classical case. Regarding the second statement, following \cite{Rasmussen2010} let \( \iota : \vckh ' ( K ) \rightarrow \vckh ' ( K ) \) be the map which acts by the identity on \( \vckh'_e(K) \) and multiplication by \( -1 \) on \( \vckh'_o(K) \). We claim that \( \iota ( \sg_o ) = \pm \sg_{\overline{o}} \). To show this we define a new grading on \( \mathcal{A} \) with respect to which \( X \) has grading \( 2 \) and \( 1 \) has grading \( 4 \). We have that \( \overline{X} = - X \) and \( \overline{1} = 1 \) so that \( \overline{r} = g \) and \( \overline{g} = r \), and the map
	\begin{equation*}
	\overline{\phantom{X}}^{\otimes n} : \mathcal{A}^{\otimes n} \rightarrow \mathcal{A}^{\otimes n}
	\end{equation*}
	(which applies the barring operator to all tensor factors) acts as the identity on elements with new grading congruent to \( 0 \mod{4} \) and multiplication by \( - 1 \) on elements with new grading congruent to \( 2 \mod{4} \). The new grading differs from the \( q \)-grading by an overall shift so that
	\begin{equation*}
	\iota ( \sg_{o} ) = \pm \overline{\sg_{o}}^{\otimes n} = \pm \sg_{\overline{o}}
	\end{equation*}
	as in the classical case.
\end{proof}

A direct corollary of \Cref{Lem:ras} is that \( \sg_{o} \) is not of top filtered degree, that is:
\begin{equation}
\label{smin}
s ( \sg_{o} ) = s ( \sg_{\overline{o}} ) = s_{min} ( K ).
\end{equation}

\subsection{Additivity of the virtual Rasmussen invariant}
\label{Sec:additive}
We can use the chain-level generators of \( \vkh ' ( K ) \) to show that the virtual Rasmussen invariant is additive with respect to connect sum, confirming that the virtual invariant behaves in the same way as its classical counterpart in this respect.

The connect sum operation on virtual knots is ill-defined. That is, the result of the operation depends on both the diagrams used and the site at which the sum is conducted. As a result there exist multiple inequivalent virtual knots which can be obtained as connect sums of a fixed pair of virtual knots. By an abuse of notation we shall denote by \( K_1 \# K_2 \) any of the knots obtained as a connect sum of virtual knots \( K_1 \) and \( K_2 \).

\begin{theorem}
	\label{Thm:additive}
	For virtual knots \( K_1 \) and \( K_2 \)
	\begin{equation}
	s ( K_1 \# K_2 ) = s ( K_1 ) + s ( K_2 ).
	\end{equation}
\end{theorem}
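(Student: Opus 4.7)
The plan is to construct chain-level canonical generators for a connect sum diagram directly, using the machinery of \Cref{Sec:cangen}, and compare them with the generators of the summands. Fix diagrams \(D_1, D_2\) of \(K_1, K_2\) and let \(D = D_1 \# D_2\) be any connect sum diagram, performed at a chosen arc on each summand. First I would observe that the oriented resolution of \(D\) is obtained from the disjoint union of the oriented resolutions of \(D_1\) and \(D_2\) by a single merge at the connect sum site, so that the number of cycles satisfies \(k(D) = k(D_1) + k(D_2) - 1\); meanwhile the numbers of positive and negative crossings are manifestly additive.

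Next I would run the procedure of \Cref{Sec:cangen} for \(D\) and show it decomposes as the concatenation of the same procedure for \(D_1\) and \(D_2\). The checkerboard colouring of the abstract link diagram for \(D\) restricts, away from the connect sum site, to the checkerboard colourings for \(D_1\) and \(D_2\); the list of cross cuts of \(D\) is likewise the union of those of \(D_1\) and of \(D_2\), possibly with one or two additional cross cuts introduced at the connect sum site depending on whether the source-sink orientations of the glued arcs agree. The key local claim is that the two cycles amalgamated at the merge must receive the same red/green label in any alternately coloured smoothing of \(D\), so after cancelling cross cuts as in \Cref{Sec:cangen} one obtains
\[
\sg_o^{D} \;=\; m\bigl( \sg_{o_1}^{D_1} \otimes \sg_{o_2}^{D_2} \bigr),
\]
where \(m\) is applied to the two tensor factors indexed by the merging cycles. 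Since \(r\cdot r = r\) and \(g\cdot g = g\), this simply identifies the two relevant tensor factors, without cancelling any generators.

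With this identification in hand, I would compute the filtration grading. Both \(r\) and \(g\) have minimum filtration grading \(-1\) in \(\mathcal A\), so \(j(\sg_o) = n_+(D) - 2 n_-(D) - k(D)\). The additivity of \(n_\pm\) together with \(k(D) = k(D_1) + k(D_2) - 1\) immediately yields
\[
j(\sg_o^{D}) \;=\; j(\sg_{o_1}^{D_1}) + j(\sg_{o_2}^{D_2}) + 1.
\]
Equation \eqref{smin} states \(s_{min}(K) = j(\sg_o)\), and \Cref{Prop:Rasproperties}(1) gives \(s(K) = s_{min}(K) + 1\). Combining these yields
\[
s(K_1 \# K_2) = j(\sg_o^{D}) + 1 = \bigl(j(\sg_{o_1}^{D_1}) + 1\bigr) + \bigl(j(\sg_{o_2}^{D_2}) + 1\bigr) = s(K_1) + s(K_2).
\]

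The main obstacle is the second step: verifying that the procedure assigning a single colour to each cycle (which involves a stable equivalence, an arbitrary pairing of cross cuts for cancellation, and a choice of which side of the surface to read) is compatible with the decomposition of \(D\) into \(D_1\) and \(D_2\) across the connect sum site. The worry is that new cut loci born at that site could force cross cut cancellations that pair a cross cut of \(D_1\) against one of \(D_2\), breaking the factorisation of \(\sg_o^{D}\). I would overcome this by showing that one may always choose the cancellation so that the cross cuts on either side of the connect sum site are paired only among themselves, and that any two such choices yield generators with the same filtration grading (by the same conjugation argument that closes \Cref{Sec:cangen}). Once this is secured, the arithmetic above is routine and the result is independent of the specific connect sum produced, as required by the ill-definedness of \(\#\) on virtual knots.
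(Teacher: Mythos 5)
The decisive gap is in the final grading step. You read equation \eqref{smin} as saying ``\(s_{min}(K) = j(\sg_o)\)'', but the paper's \(s\) is the grading on homology \emph{induced} by the filtration \(j\) on the chain complex: \(s(\sg_o)\) means the filtered degree of the homology class \([\sg_o]\), i.e.\ the maximum of \(j(y)\) over all cycles \(y\) with \([y]=[\sg_o]\). This is in general strictly larger than the chain-level \(j(\sg_o)\) of the particular representative you construct. In fact \(j(\sg_o) \leq s([\sg_o]) = s_{min}(K)\) is precisely the easy direction of the slice-Bennequin inequality; if it were an equality for every diagram then \(U_v(D)\) would always compute \(s(K)\) on the nose and the error term \(\Delta_v\) developed in \Cref{Sec:bounds} would be superfluous.

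Because of this, your chain-level identity \(j(\sg_o^D) = j(\sg_{o_1}^{D_1}) + j(\sg_{o_2}^{D_2}) + 1\) (granting the factorisation of the canonical generators, which is correct in spirit) does not yield \(s_{min}(K_1\#K_2) = s_{min}(K_1) + s_{min}(K_2) + 1\); it only gives a lower bound on \(s_{min}(K_1\#K_2)\) that need not match the target. To control the filtered degree on homology one needs filtered chain maps between the complexes, and this is exactly how the paper proceeds: the map \(\Delta' : \vkh'(K_1\#K_2) \to \vkh'(K_1)\otimes\vkh'(K_2)\) is of filtered degree \(-1\) and carries \([\sg_o]\) to a product \([\sg^1\otimes\sg^2]\), giving one inequality between the \(s_{min}\)'s, and the reverse inequality follows from the mirror-image relation \(s_{min}(K) = -s_{max}(\overline{K})\). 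Your factorisation of the chain-level generator is the reason \(\Delta'\) sends a canonical generator to a canonical generator, so it is a genuine ingredient of the argument — but it determines the \(j\)-grading of a cycle, not the filtered degree of its homology class, and the remaining two-sided inequality argument cannot be bypassed.
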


\begin{proof}
	With the chain-level generators in place, along with \Cref{Lem:ras}, the proof follows much the same path as that in \cite{Rasmussen2010}. For all connect sums \( K_1 \# K_2 \) there exists the map
	\begin{equation*}
	\vkh ' ( K_1 \# K_2 ) \overset{\Delta '}{\longrightarrow} \vkh ' ( K_1 \sqcup K_2 ) \cong \vkh ' ( K_1 ) \otimes \vkh ' ( K_2 ).
	\end{equation*}
	It sends a canonical generator \( \sg_o \) of \( \vkh ' ( K_1 \# K_2 ) \) to a canonical generator of \( \vkh ' ( K_1 ) \otimes \vkh ' ( K_2 ) \) of the form \( \sg^1 \otimes \sg^2 \) where \( \sg^i \) is a generator of \( \vkh ' ( K_i ) \) for \( i = 1, 2 \). As in the classical case, the map is of filtered degree \( -1 \) and we obtain
	\begin{equation}
	\label{add1}
	\begin{aligned}
	s ( \sg_o ) - 1 &\leq s ( \sg^1 \otimes \sg^2 ) = s ( \sg^1 ) + s ( \sg^2 ) \\
	s_{min} ( K_1 \# K_2 ) &\leq s_{min} ( K_1 ) + s_{min} ( K_2 ), ~\text{by \Cref{smin}}.
	\end{aligned}
	\end{equation}
	From this point the proof proceeds as in that of the analogous statement in \cite{Rasmussen2010}: utilising the fact that \( s_{min} ( K ) = - s_{max} (\overline{K} ) \) we are able to obtain from \Cref{add1} that
	\begin{equation*}
	\begin{aligned}
	s_{min} ( K_1 \# K_2 ) &= s_{min} ( K_1 ) + s_{min} ( K_2 ) + 1 \\
	s_{max} ( K_1 \# K_2 ) &= s_{max} ( K_1 ) + s_{max} ( K_2 ) - 1 
	\end{aligned}
	\end{equation*}
	as required.
\end{proof}
In light of \Cref{Thm:additive} we see that the Rasmussen invariant cannot distinguish between connect sums of a fixed pair of virtual knots. In general it is not known, for \( K_1 \) and \( K_2 \) both (possibly inequivalent) connect sums of a fixed pair of virtual knots, if \( K_1 \) is concordant to \( K_2 \). It is known, however, that neither the Jones polynomial \cite{Manturov2013} nor the Rasmussen invariant can distinguish them. This leads one to posit whether Khovanov homology can; in the case of connect sums of trivial diagrams it is shown in \cite{Rushworth2017} that doubled Khovanov homology cannot.

\section{Computable bounds}\label{Sec:bounds}
In this section we extend the strong slice-Bennequin bounds to the virtual and doubled Rasmussen invariants. The bounds are constructed, and cases in which they vanish partly or wholly are described.

\subsection{The virtual Rasmussen invariant}\label{Sec:dkkformulation}
\subsubsection{Formulation}
\begin{definition}
	\label{Seifertgraph}
	Given a virtual link diagram \( D \) denote by \( O ( D ) \) the oriented smoothing of \( D \). Denote by \( T_O ( D ) \) the signed graph with a vertex for each cycle of \( O ( D ) \) and an edge for each classical crossing of \( D \), decorated with the sign of the crossing. The edge associated to a crossing is between the vertex or vertices associated to the cycles involved in the smoothing site of that crossing. The subgraph of \( T_O ( D ) \) formed by removing all the edges labelled with \( + \) (respectively \( - \)) is denoted \( T^{-}_O ( D ) \) (respectively \( T^{+}_O ( D )\)).\CloseDef
\end{definition}

The graph \( T_O ( D ) \) is often called the \emph{Seifert graph} of \( D \), but in order to avoid confusion with a graph defined in \Cref{Subsec:doubledbounds} we shall not use that term.

\begin{definition}
	\label{Def:u}
	Given a virtual knot diagram \( D \) the quantities \( U_v ( D ),~ \Delta_v ( D ) \in \mathbb{Z} \) are given by
	\begin{equation*}
	\begin{aligned}
	U_v ( D ) &= \# ~\text{vertices}~ ( T_O ( D ) ) - 2 \# ~\text{components}~ ( T^{-}_O ( D ) ) + wr ( D ) + 1 \\
	\Delta_v ( D ) &= \# ~\text{vertices}~ ( T_O ( D ) ) - \# ~\text{components}~ ( T^{+}_O ( D ) ) - \# ~\text{components}~ ( T^{-}_O ( D ) ) + 1.
	\end{aligned}
	\end{equation*}
	The quantities \( U_v ( D ) \) and \( \Delta_v ( D ) \) are dependent on the diagram \( D \) and are not invariants of the virtual knot.\CloseDef
\end{definition}

\begin{theorem}[Analogue of Theorem \( 1.2 \) of Lobb \cite{Lobb2011}]
	\label{Thm:u}
	For \( D \) a diagram of a virtual knot \( K \)
	\begin{equation*}
	s ( K ) \leq U_v ( D ).
	\end{equation*}
	Notice that the left hand side is a knot invariant whereas the right is diagram-dependent.
\end{theorem}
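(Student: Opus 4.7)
My plan is to follow the strategy of Lobb \cite{Lobb2011} and Kawamura \cite{Kawamura2015a} from the classical setting, using the chain-level canonical generators of $\vkh'(K)$ developed in \Cref{Sec:cangen}. By Proposition \ref{Prop:Rasproperties}(1) the theorem is equivalent to $s_{max}(K) \leq U_v(D) + 1$, and by \Cref{Lem:ras} together with \Cref{smin} the Lee class achieving $s_{max}(K)$ is one of $[\sg_o \pm \sg_{\bar o}]$; denote it $[\tau]$. The problem therefore reduces to producing an explicit chain representative of $[\tau]$ whose $q$-filtration realises the desired bound, which may equivalently be phrased as a lower bound on the mirror diagram via Proposition \ref{Prop:Rasproperties}(2) and established by the same chain-level argument applied to $\overline{D}$.

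Next I would construct such a representative at a non-oriented resolution $r$ of the cube. Concretely, choose a spanning forest $\mathcal{F}$ of $T^-_O(D)$ with $\#V(T_O(D)) - \#\pi_0(T^-_O(D))$ edges, and let $r$ be obtained from the oriented resolution by switching the smoothing at each crossing corresponding to an edge of $\mathcal{F}$. Each forest edge joins distinct vertices of $T^-_O(D)$, so each switch merges two oriented cycles into one, and $r$ has exactly $\#\pi_0(T^-_O(D))$ cycles. Using the chain-level description of \Cref{Sec:cangen}, propagate the alternating red/green colouring of the oriented resolution through the forest surgeries to obtain an explicit chain $\gamma \in \vckh'(D)$ supported on $r$. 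A direct calculation of the $q$-filtration of $\gamma$, using the height shift $|\mathcal{F}|$, the cycle-merge count $|\mathcal{F}|$, the writhe contribution, and the MDKK conventions, yields precisely $U_v(D) + 1$.

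The last step is to verify that $\gamma$ is a cycle in $\vckh'(D)$ and represents $[\tau]$. The cycle condition amounts to controlling the outgoing differentials from $r$; the representativity is established by exhibiting an explicit bounding chain supported on the intermediate resolutions along the forest, so that the Lee differential telescopes through the sequence of merges to identify $\gamma$ with the canonical generator.

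The principal obstacle is this last step. At resolutions reached along the spanning forest surgeries the MDKK differential may contain $\eta$-maps, which are absent at the oriented resolution thanks to source-sink considerations (compare \Cref{Prop:noeta}) but need not be absent at $r$ or at the intermediate resolutions. Verifying that such $\eta$-contributions either vanish or are absorbed by compensating Lee-perturbation boundaries requires careful bookkeeping of how the source-sink decoration and cross cuts behave as they propagate through the forest. This is the technical point at which the virtual argument departs most substantially from Lobb's original, and I expect it to be the hardest part of the proof.
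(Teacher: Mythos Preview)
There is a direction error at the heart of your plan. Exhibiting a single chain $\gamma$ representing $[\tau]$ can only yield the \emph{lower} bound $s_{\max}(K)\ge j(\gamma)$, because the induced grading on homology is the supremum of $j$ over all representatives; the theorem asks for the \emph{upper} bound $s_{\max}(K)\le U_v(D)+1$. You note that one may pass to the mirror to swap the direction, but the detailed construction you then give is for $D$, not $\overline D$: a spanning forest of $T^-_O(D)$ picks out negative crossings of $D$, whereas on $\overline D$ the relevant graph is $T^-_O(\overline D)\cong T^+_O(D)$, and the numerics produced would involve $\#\pi_0(T^+_O(D))$ rather than $\#\pi_0(T^-_O(D))$. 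A related symptom of the confusion is that switching negative crossings of $D$ from their oriented ($1$-) resolution to the $0$-resolution \emph{lowers} homological degree, so a chain ``supported on $r$'' lives in degree $-|\mathcal F|$ and cannot literally represent $[\tau]$, which lives in degree $0$.

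The paper instead follows Lobb's classical route directly, and this sidesteps your principal obstacle entirely. One resolves all \emph{positive} classical crossings of $D$ to their oriented resolution, obtaining a disjoint union $K^- = K^-_1 \sqcup \cdots \sqcup K^-_l$ of all-negative virtual link diagrams, with $l$ equal to the number of components of $T^-_O(D)$; projection to this face of the cube is a filtered chain map carrying $\sg_o$ to $\sg_1\otimes\cdots\otimes\sg_l$, where $\sg_r$ is the canonical generator of $\vkh'(K^-_r)$ for the induced orientation (the paper observes that the abstract-link-diagram construction of \Cref{Sec:cangen} makes this identification automatic, replacing the classical nesting argument). For each $K^-_r$ the oriented resolution sits at the all-$1$ vertex, the \emph{top} of its cube, so the filtration of $[\sg_r]$ is trivially bounded by the maximal $q$-grading present in that single chain group, and \Cref{Lem:ras} shows $[\sg_r]$ does not attain that top grading, contributing a further $-2$. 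Summing over $r$ and absorbing the filtration shift from the resolved positive crossings yields $s_{\min}(K)\le U_v(D)-1$. Because each piece is bounded at the top of an all-negative cube, no non-oriented resolutions are ever visited and no $\eta$-maps enter the argument.
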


To prove this we require \Cref{Lem:ras}, as we have canonical generators in terms of \( r \) and \( g \) instead of \( a = 2r \) and \( b = -2g \) and the proof given in \cite{Rasmussen2010} relies on the sign of \( a \) and \( b \).

\begin{proof}
	(of \Cref{Thm:u})
	The proof is practically identical to that of the classical case given in \cite{Lobb2011}. Form the diagram \( K^{-} \) from \( K \) by smoothing all the positive classical crossings of \( K \) to their oriented resolution, and suppose that \( K^{-} \) is the disjoint union of \( l \) virtual link diagrams. Label these diagrams \( K^{-}_{1}, K^{-}_{2}, \dots, K^{-}_{l} \). Then the canonical generator \( \sg_o \) splits as a tensor product of canonical generators of \( \vkh ' ( K^{-}_{r} ) \) as
	\begin{equation*}
	\sg_o = \sg_1 \otimes \sg_2 \otimes \dots \otimes \sg_l.
	\end{equation*}
	Classically, \( \sg_r \) can either be \( \sg_{o^{\prime}} \) or \( \sg_{\overline{o^{\prime}}} \) where \( o^{\prime} \) denotes the induced orientation on \( K^{-}_{r} \), as we are possibly altering the number of cycles separating others from infinity. In the virtual case, however, \( \sg_r = \sg_{o^{\prime}} \) by construction as we use abstract link diagrams to produce the canonical generators rather than the method due to Lee.
	
	Where the proof given in \cite{Lobb2011} invokes Theorem \( 3.5 \) of \cite{Rasmussen2010} we invoke \Cref{Lem:ras} as given above.
\end{proof}

\begin{theorem}[Analogue of Theorem \( 1.10 \) of Lobb \cite{Lobb2011}]
	\label{Thm:delta}
	If \( \Delta_v ( D ) = 0 \) then \( s ( K ) = U_v ( D ) \), where \( K \) is the virtual knot represented by \( D \). In fact
	\begin{equation*}
	U_v ( D ) - 2 \Delta_v ( D ) \leq s ( K ) \leq U_v ( D ).
	\end{equation*}
\end{theorem}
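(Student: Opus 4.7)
The plan is to mirror Lobb's strategy for the classical case, combining the upper bound of \Cref{Thm:u} with the same bound applied to the mirror diagram \(\overline{D}\). The first sentence (\(\Delta_v ( D ) = 0 \Rightarrow s(K) = U_v(D)\)) is an immediate consequence of the two-sided inequality, so the main task is to establish the lower bound \(s(K) \geq U_v(D) - 2\Delta_v(D)\).

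First I would apply \Cref{Thm:u} to \(\overline{D}\), which represents the mirror knot \(\overline{K}\), to obtain \(s(\overline{K}) \leq U_v(\overline{D})\). Combining with the mirror identity \(s(\overline{K}) = -s(K)\) from \Cref{Prop:Rasproperties}(2) yields
\begin{equation*}
	s(K) \geq -U_v(\overline{D}).
\end{equation*}
Consequently, the desired lower bound will follow once the combinatorial identity
\begin{equation*}
	U_v(D) + U_v(\overline{D}) = 2\Delta_v(D)
\end{equation*}
is established.

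To verify this identity I would argue as follows. Mirroring \(D\) switches the signs of all classical crossings while leaving the orientation of the underlying knot unchanged; in particular the oriented resolutions are preserved cycle-by-cycle, so \(O(\overline{D}) = O(D)\) and \(T_O(\overline{D})\) coincides with \(T_O(D)\) as an unsigned graph but with every edge-sign flipped. Hence \(T^+_O(\overline{D}) = T^-_O(D)\), \(T^-_O(\overline{D}) = T^+_O(D)\), and \(wr(\overline{D}) = -wr(D)\). Substituting these equalities into the definition of \(U_v\) in \Cref{Def:u} and summing \(U_v(D) + U_v(\overline{D})\), the writhe contributions cancel, the vertex-count term doubles, and the two component-count terms recombine to give precisely \(2\Delta_v(D)\).

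I do not anticipate any substantive obstacle: the upper bound of \Cref{Thm:u}, the chain-level canonical generators constructed in \Cref{Sec:cangen}, and the mirror behaviour of the virtual Rasmussen invariant are already in hand, so the argument reduces to a short bookkeeping exercise. The only subtlety worth checking is that mirroring really does act on \(T_O(D)\) in the advertised way, which is immediate from the fact that the oriented resolution of a classical crossing depends only on the orientation of the incident strands and not on the sign of the crossing itself.
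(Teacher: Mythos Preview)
Your proposal is correct and is precisely the argument the paper has in mind: it states only that the proof is identical to Lobb's classical one, ``owing to the identical behaviour of the virtual and classical Rasmussen invariants with respect to the mirror image,'' and your write-up spells out exactly that mirror-plus-bookkeeping argument. The combinatorial identity \(U_v(D)+U_v(\overline{D})=2\Delta_v(D)\) and the observation that mirroring flips edge-signs on \(T_O(D)\) while fixing the underlying graph are both correct.
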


The proof of \Cref{Thm:delta} is identical to that of the classical case, owing to the identical behaviour of the virtual and classical Rasmussen invariants with respect to the mirror image.

\subsubsection{The case \( \Delta_v ( D ) = 0 \)}\label{Sec:deltazero}

Cromwell defined \emph{homogeneous knots} \cite{Cromwell1989}. Here we recap his definition, which works equally well for virtual knots.

\begin{definition}
	\label{Def:cutvertex}
	A \emph{cut vertex} of a graph \( G \) is a vertex such that the graph obtained by removing the vertex along with its boundary edges has more connected components than \( G \).\CloseDef
\end{definition}

\begin{definition}
	\label{Def:block}
	A \emph{block} of a graph \( G \) is a maximal connected subgraph of \( G \) containing no cut vertices.\CloseDef
\end{definition}

\begin{definition}
	\label{Def:homogeneousgraph}
	A signed graph \( G \) is \emph{homogeneous} if every block \( B \) of \( G \) is such that all edges contained in \( B \) are decorated with the same sign.\CloseDef
\end{definition}

\begin{definition}
	\label{Def:homogeneous}
	A virtual link diagram \( K \) is \emph{homogeneous} if \( T_O ( K ) \) is homogeneous. A virtual link is homogeneous if there exists a diagram of it which is homogeneous.\CloseDef
\end{definition}

Positive and negative virtual knots are homogeneous trivially (as \( T_O ( D ) \) possesses only one kind of decoration). In the classical case alternating knots are also homogeneous \cite{Kauffman1983}. In the virtual case, however, this no longer holds. For example, the virtual knot diagram given in \Cref{Fig:3.7diagram} is alternating but not homogeneous.

\begin{figure}
	\centering
	\begin{subfigure}[b]{0.4\textwidth}
		\begin{center}
			\includegraphics[scale=0.75]{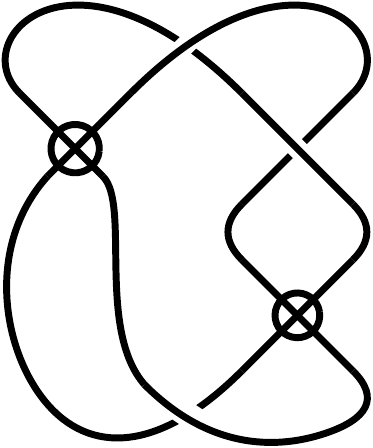}
			\caption{An alternating virtual knot diagram which is not homogeneous. It is virtual knot \( 3.7 \) in Green's table \cite{Green}.}
			\label{Fig:3.7diagram}
		\end{center}
	\end{subfigure}
	~
	\begin{subfigure}[b]{0.4\textwidth}
		\begin{center}
			\raisebox{35pt}{\includegraphics[scale=1]{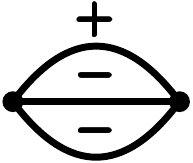}}
			\caption{The graph \( T_O ( D ) \) of virtual knot \( 3.7 \).}
			\label{Fig:3.7graph}
		\end{center}
	\end{subfigure}
	\caption{}\label{Fig:3.7}
\end{figure}

\begin{figure}
	\centering
	\begin{subfigure}[b]{0.4\textwidth}
		\begin{center}
			\includegraphics[scale=0.75]{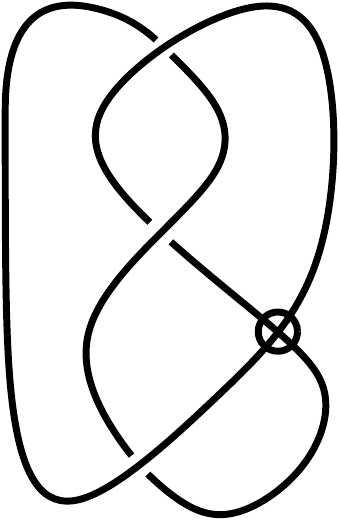}
			\caption{A diagram of virtual knot \(3.2\).}
			\label{Fig:3.2diagram}
		\end{center}
	\end{subfigure}
	~
	\begin{subfigure}[b]{0.4\textwidth}
		\begin{center}
			\raisebox{35pt}{\includegraphics[scale=1]{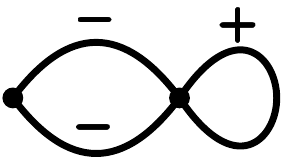}}
			\caption{The graph \( T_O ( D ) \) of virtual knot \( 3.2\).}
			\label{Fig:3.72graph}
		\end{center}
	\end{subfigure}
	\caption{}\label{Fig:3.2}
\end{figure}

Abe showed that for a classical knot diagram \( D \) \( \Delta_v ( D ) = 0 \) if and only if \( D \) is homogeneous \cite{Abe2011}. However, Abe's proof relies on \( T_O ( D ) \) containing no loops (an edge which begins and ends at the same vertex); classically, this is always the case as the oriented resolution is the alternately coloured resolution, so that \( T_O ( D ) \) is bipartite. Virtually, however, there are knots whose oriented resolution is not the alternately coloured resolution; this is explained fully below. An example is given in \Cref{Fig:3.2}. For now, it suffices to recall that the quantity \( \Delta_v \) can be expressed as the first Betti number of the graph, \( G_{O} \), defined as follows.

\begin{definition}\label{Def:gdelta}
	Let \( T_O ( D ) \) be associated to the virtual knot diagram \( D \). Form the graph \( G_{O} \) in the following way:
	\begin{enumerate}[(i)]
		\item For each connected component of \( T^+_O ( D ) \) place a vertex, and a vertex for each connected component of \( T^-_O ( D ) \).
		\item Each vertex of \( T_O ( D ) \) lies in exactly one connected component of \( T^+_O ( D ) \), and exactly one connected component of \( T^-_O ( D ) \). For each vertex of \( T_O ( D ) \) place an edge linking the vertices of \( G_\Delta \) corresponding to the connected components in which it lies.\CloseDef
	\end{enumerate}
\end{definition}

\begin{proposition}
	\label{Prop:removingselfedges}
	Let \( T_O ( D ) \) be associated to the virtual knot diagram \( D \), and \( \widetilde{T}_O ( D ) \) be a graph obtained from \( T_O ( D ) \) by adding or removing a loop (of arbitrary sign). Further, let \( \widetilde{G}_O \) be the graph formed from \( \widetilde{T}_O ( D ) \) following the method given in \Cref{Def:gdelta}, where \( \widetilde{T}^+_O ( D ) \) and \( \widetilde{T}^-_O ( D ) \) are formed in the obvious way. Then \( G_O = \widetilde{G}_O \).
\end{proposition}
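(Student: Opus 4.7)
The plan is essentially to observe that a loop (an edge from a vertex to itself) contributes nothing to the connectivity structure of a graph: for any graph $H$ and loop $\ell$ at a vertex $v$, the graph $H$ and the graph $H \setminus \{\ell\}$ have the same set of connected components, and each vertex belongs to the same component in both. This is the only graph-theoretic input needed. Everything else is just unwinding the definitions of $T^\pm_O(D)$ and of $G_O$.

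First, I would note that $\widetilde{T}_O(D)$ and $T_O(D)$ have exactly the same vertex set, since adding or deleting a loop does not introduce or destroy any vertex. Write $\ell$ for the loop in question and let $\epsilon \in \{+,-\}$ be its sign. By the definition of $T^\pm_O(D)$, exactly one of $\widetilde{T}^+_O(D)$ and $\widetilde{T}^-_O(D)$ differs from its counterpart in $T_O(D)$, and only by the presence or absence of the single edge $\ell$; the other subgraph coincides on the nose. Applying the loop observation above, the connected components (as vertex sets) of $\widetilde{T}^+_O(D)$ agree with those of $T^+_O(D)$, and likewise for the $-$ subgraphs. In particular each vertex of $T_O(D)$ lies in the same $T^+$-component and the same $T^-$-component whether one works in $T_O(D)$ or in $\widetilde{T}_O(D)$.

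Finally I would apply \Cref{Def:gdelta} to conclude. The vertex set of $G_O$ is indexed by the connected components of $T^+_O(D)$ together with those of $T^-_O(D)$; by the previous paragraph this set is identical to the corresponding indexing set for $\widetilde{G}_O$. The edges of $G_O$ are indexed by the vertices of $T_O(D)$, with each such vertex determining an edge between the $T^+$-component and the $T^-$-component containing it; the same description with the same data produces the edges of $\widetilde{G}_O$. Hence $G_O = \widetilde{G}_O$ as signed graphs. There is no real obstacle here; the only point to state carefully is that a loop, even one whose removal could a priori disconnect something, cannot in fact do so precisely because both its endpoints coincide.
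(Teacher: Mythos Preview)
Your proof is correct and follows essentially the same approach as the paper's: both argue that a loop does not change the connected components of the signed subgraphs, hence neither the vertex set nor the edge set of $G_O$ changes. Your write-up is in fact more careful than the paper's about why the edge sets (as multisets indexed by vertices of $T_O(D)$) coincide; the only quibble is that $G_O$ is not a signed graph per \Cref{Def:gdelta}, so drop the phrase ``as signed graphs''.
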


\begin{proof}
	It is clear that
		\begin{equation*}
			\# \text{components} ( T^{+/-}_O ( D ) ) = \# \text{components} ( \widetilde{T}^{+/-}_O ( D ) )
		\end{equation*}
	(we have only added or removed a loop) so that
		\begin{equation*}
			\# \text{vertices} ( G_O ) = \# \text{vertices} ( \widetilde{G}_O ).
		\end{equation*}
	Further, as loops do not connect distinct vertices, two vertices are linked in \( G_O \) if and only if they are linked in \( \widetilde{G}_O \).
\end{proof}

In light of \Cref{Prop:removingselfedges} it is clear that we need only consider homogeneity of \( T_O ( D ) \) up to the addition and removal of loops.

\begin{definition}
	\label{Def:lhom}
	Let \( G \) be a signed graph and let \( \overline{G} \) be the graph formed by removing all loops of \( G \). Then \( G \) is \emph{l-homogenous} if \( \overline{G} \) is homogenous. A virtual knot diagram is l-homogenous if \( T_O ( D ) \) is, and a virtual knot is l-homogenous if it has an l-homogenous diagram.\CloseDef
\end{definition}

\begin{theorem}[Analogue of Theorem \( 3.4 \) of Abe \cite{Abe2011}]
	\label{Thm:deltazero}
	A virtual knot diagram \( D \) is l-homogeneous if and only if \( \Delta_v ( D ) = 0 \). Hence, for an l-homogeneous diagram \( D \) of a virtual knot \( K \)
	\begin{equation*}
		U ( D ) = s ( K ).
	\end{equation*}
\end{theorem}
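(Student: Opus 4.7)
The plan is to adapt Abe's argument from \cite{Abe2011} for the classical case to the virtual setting, where the new wrinkle is the possible presence of loops in $T_O(D)$. The starting point, already prepared in the paper, is that $\Delta_v(D)$ coincides with the first Betti number of the auxiliary graph $G_O$ from \Cref{Def:gdelta}, combined with \Cref{Prop:removingselfedges}, which tells us that $G_O$ is insensitive to the addition or removal of loops in $T_O(D)$. These two facts together explain why the natural hypothesis is l-homogeneity rather than homogeneity.

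First I would verify the identification $\Delta_v(D) = b_1(G_O)$ directly: by \Cref{Def:gdelta}, $G_O$ has $\#\text{components}(T^+_O(D)) + \#\text{components}(T^-_O(D))$ vertices and $\#\text{vertices}(T_O(D))$ edges, and its connected components are in bijection with those of $T_O(D)$. Assuming $D$ is a connected diagram of a knot, so that $T_O(D)$ is connected, the standard Euler formula $b_1 = |E| - |V| + c$ matches the definition in \Cref{Def:u} exactly.

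With this identification in hand and \Cref{Prop:removingselfedges} allowing us to pass to the loop-free graph $\overline{T_O(D)}$, the theorem reduces to a purely graph-theoretic claim: a loop-free signed graph $T$ is homogeneous if and only if the associated graph built as in \Cref{Def:gdelta} is a forest. The forward implication is by induction on the block decomposition of $T$: a monochromatic block contributes only edges of $G_O$ incident to the single vertex of $G_O$ corresponding to the opposite-colour component that meets it, and so creates no cycles. The backward implication is the main content: given an inhomogeneous block $B$ of $T$ containing edges of both signs, I would use the 2-edge-connectedness of $B$ to build a closed walk in $B$ that visits distinct monochromatic components of alternating colour, and argue that this walk descends to a nontrivial cycle in $G_O$.

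The hard part will be this backward direction, and specifically ensuring that the cycle produced in $G_O$ is genuinely nontrivial rather than a back-and-forth along a single edge: since $G_O$ collapses whole monochromatic components into single vertices, a careless closed walk in $B$ can easily descend to a nullhomotopic loop. This is precisely where the virtual setting departs from the classical argument, in which the bipartite nature of $T_O(D)$ restricts how edges of opposite signs can be interleaved within a block. Once this obstacle is resolved, the final clause $U_v(D) = s(K)$ is immediate from \Cref{Thm:delta}.
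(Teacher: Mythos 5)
Your reduction strategy is exactly the paper's: identify \( \Delta_v(D) \) with \( b_1(G_O) \), invoke \Cref{Prop:removingselfedges} to strip loops from \( T_O(D) \) without changing \( G_O \), and then handle the loopless case. Where you diverge is at the loopless case itself: the paper simply cites Abe, asserting that ``Abe's original proof yields the following statement: if \( D \) is such that \( T_O(D) \) is loopless, then \( D \) is homogeneous if and only if \( \Delta_v(D)=0 \)'', and stops there. You instead attempt to reprove the underlying graph-theoretic claim (loopless signed \( T \) homogeneous \( \iff \) \( G_O \) a forest) from scratch, and you explicitly flag the backward direction — producing a genuine cycle in \( G_O \) from an inhomogeneous block — as unresolved. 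That is a real gap: as written, your argument does not prove the theorem.

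Two further remarks. First, your worry that the classical argument might secretly use the bipartiteness of \( T_O(D) \) (rather than just looplessness) is a reasonable thing to check, and the paper does not address it; but note that \( G_O \) itself is always bipartite by construction (edges run only between \( T^+ \)-component vertices and \( T^- \)-component vertices), and the block-decomposition reasoning Abe uses is about 2-connectivity, not about parity of cycles in \( T_O(D) \), so the concern dissolves on closer inspection — but you would need to actually inspect Abe's proof or finish your own to say so. Second, your description of the forward direction is slightly off: a monochromatic \( + \)-block \( B \) contributes one \( G_O \)-edge per vertex of \( B \), all sharing the \( G_O \)-vertex of the \( T^+ \)-component containing \( B \), but the other endpoints (the \( T^- \)-components of the various vertices of \( B \)) are generally distinct, not ``the single vertex of \( G_O \) corresponding to the opposite-colour component that meets it''. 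The forward direction then follows by a counting argument over the block tree rather than the local picture you sketch. Either complete the graph theory or, more economically, do what the paper does and quote Abe's loopless statement directly — the loop-removal step you have already set up is the only genuinely new content needed here.
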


\begin{proof}
	Abe's original proof yields the following statement: if \( D \) is such that \( T_O ( D ) \) is loopless, then \( D \) is homogenous if and only if \( \Delta_v ( D ) = 0 \). By \Cref{Prop:removingselfedges} we may remove any loops from \( T_O ( D ) \), leaving the associated \( G_{O} \) unchanged. Recalling that \( \Delta_v ( D ) = b_1 ( G_O ) \), the first Betti number of \( G_O \), we obtain the desired result.
\end{proof}

\subsection{The doubled Rasmussen invariant}
\label{Subsec:doubledbounds}
\subsubsection{Formulation}
\label{Subsec:doubledform}
In formulating the bounds on the doubled Rasmussen invariant we follow much the same path as in \Cref{Sec:dkkformulation}. The formulae arrived at in this section exhibit important differences between those of \Cref{Sec:dkkformulation}, however, owing to the structural differences between MDKK homology and doubled Lee homology.

We begin by making some preliminary definitions.

\begin{definition}
	Let \( D \) be a diagram of a virtual knot and \( G ( D ) \) its Gauss diagram. A classical crossing of \( D \), associated to the chord labelled \(c\) in \( G ( D ) \), is known as \emph{odd} if the number of chord endpoints appearing between the two endpoints of \( c \) is odd. Otherwise it is known as \emph{even}. The \emph{odd writhe} of \( D \) is defined
	\begin{equation*}
	J ( D ) = \sum_{\text{odd crossings of}~D} \text{sign of the crossing}.
	\end{equation*}\CloseDef
\end{definition}

\begin{theorem}
	Let \( D \) be a virtual knot diagram of \(K\). The odd writhe is an invariant of \( K \) and we define
	\begin{equation*}
	J ( K ) \coloneqq J ( D ).
	\end{equation*}
\end{theorem}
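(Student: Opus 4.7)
The plan is to verify that $J(D)$ is unchanged under each of the generalized Reidemeister moves (classical R1, R2, R3; virtual vR1, vR2, vR3; and the mixed move), following the original argument of Kauffman \cite{Kauffman2004b}. First I would observe that the virtual and mixed moves leave the Gauss diagram of $D$ untouched: they reorganise only virtual crossings, which contribute no chords, so the multiset of chords, their signs, and the cyclic order of their endpoints are all preserved; hence $J(D)$ is trivially invariant under these four moves.

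For the classical moves I would proceed case by case. For R1, the added chord has both of its endpoints adjacent on the Gauss circle, so the new crossing has zero (an even number) of endpoints between its own endpoints and is even, contributing nothing to $J$; every other chord sees the two new neighbouring endpoints inserted together on one side, so its parity is preserved. For R2, the two added chords $c_1, c_2$ carry opposite signs, and their four endpoints appear on the Gauss circle as two consecutive pairs. A short analysis on the relative orientation of the two strands produces either the cyclic pattern $c_1 c_2 c_2 c_1$ (both crossings even, contributing $0$) or $c_1 c_2 c_1 c_2$ (both odd with opposite signs, so their contributions to $J$ cancel); any external chord has its parity preserved because each new pair of endpoints lies wholly on one side of its own endpoints.

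The main obstacle is the R3 check, since it modifies the Gauss code in three places simultaneously. Here I would show that the three involved chords $c_{ab}, c_{ac}, c_{bc}$ retain both their signs (a standard property of the classical R3) and their parities, by noting that on each of the three strands the R3 move swaps an adjacent pair of endpoints, displacing each involved chord past exactly two other endpoints in total (one on each of its two incident strands); the total change in the count of endpoints between a given chord's endpoints is therefore even, so parity is preserved. Chords external to the R3 tangle are unaffected since none of their endpoints move. Combining all seven move checks, $J(D)$ depends only on the virtual isotopy class of $D$, which justifies setting $J(K) \coloneqq J(D)$.
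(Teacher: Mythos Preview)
Your argument is essentially correct and follows the standard approach of Kauffman \cite{Kauffman2004b}: you verify invariance of the Gauss-diagram quantity $J(D)$ under each of the generalised Reidemeister moves, using that the purely virtual and mixed moves leave the Gauss diagram untouched, and handling the classical moves by tracking how the insertion, deletion, or transposition of chord endpoints affects parities and signs. The R1 and R2 checks are clean; for R3 your reasoning that each involved chord has one endpoint transposed on each of its two incident strands, giving a net parity change of $\pm 1 \pm 1 \equiv 0 \pmod 2$, is the right observation (and external chords are unaffected since adjacent transposed endpoints lie wholly on one side).

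The paper itself, however, does not supply a proof of this statement at all: it is stated as a known result, with the invariance attributed to Kauffman's original paper on the odd writhe. So there is nothing to compare in terms of approach---your proposal simply fills in what the paper leaves as a citation. One minor point of presentation: in your R2 analysis the phrase ``two consecutive pairs'' could be read as all four endpoints being consecutive; it would be clearer to say that the four new endpoints occur as two \emph{separated} pairs, each pair consecutive on the Gauss circle, which is what your subsequent case analysis actually uses.
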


\begin{definition}
	\label{Def:acres}
	Let \( D \) be a virtual knot diagram. The \emph{alternately coloured resolution} of a classical crossing of \( D \) is the resolution it is smoothed into in the alternately colourable smoothing of \( D \).\CloseDef
\end{definition}

\begin{proposition}[Proposition \(4.11\) of \cite{Rushworth2017}]\label{Prop:unoriented}
	A classical crossing of a virtual knot diagram is odd if and only if it's alternately coloured resolution is the unoriented resolution.
\end{proposition}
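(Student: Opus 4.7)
The plan is to track how the red/green colouring in an alternately coloured smoothing changes as one traverses the underlying \(S^{1}\) of \(D\), and to relate this to parity in the Gauss diagram \(G(D)\). I would fix a classical crossing \(c\) and label the four local arc-endings at \(c\) as \(\alpha^{\pm}\) (on the two sides of the over-endpoint in \(G(D)\)) and \(\beta^{\pm}\) (similarly at the under-endpoint). The first step is an essentially local observation: both possible resolutions at \(c\) partition these four arc-endings into two pairs that become a single coloured arc each, and in both cases \(\alpha^{-}\) and \(\alpha^{+}\) end up in different pairs, and likewise \(\beta^{-}\), \(\beta^{+}\). Hence in any alternately colourable smoothing, the colour of an arc-segment flips as one passes through any chord endpoint of \(G(D)\), independent of which resolution was chosen at the corresponding crossing. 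Virtual crossings do not appear as chords and do not alter the colouring.

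Next, letting \(n_{1}\) denote the number of chord endpoints of \(G(D)\) lying strictly between the two endpoints of \(c\) along one of the two arcs of \(S^{1}\setminus\{\text{endpoints of }c\}\), we have that \(c\) is odd exactly when \(n_{1}\) is odd. Starting on \(\alpha^{+}\) at the over-endpoint and following this arc until arrival at \(\beta^{-}\) at the under-endpoint, the colour flips \(n_{1}\) times by the observation of the previous step. So \(\alpha^{+}\) and \(\beta^{-}\) receive the same colour if and only if \(c\) is even.

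Then I would use the alternately colourable condition at \(c\) itself: it pairs \(\alpha^{+}\) with a partner of the same colour, and since \(\beta^{+}\) and \(\beta^{-}\) have opposite colours, exactly one of the two resolutions is compatible. Thus if \(c\) is even the chosen smoothing realises the pairing \(\{\alpha^{+},\beta^{-}\}\), \(\{\alpha^{-},\beta^{+}\}\), and if \(c\) is odd it realises \(\{\alpha^{+},\beta^{+}\}\), \(\{\alpha^{-},\beta^{-}\}\). The final step is a short local check at \(c\) identifying the first pairing as the oriented resolution (the incoming over-strand continues onto the outgoing under-strand with orientations matching) and the second as the unoriented resolution, carried out separately for positive and negative crossings. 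This would yield the proposition.

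The main care needed is in this last step: one must match the two pairings to oriented versus unoriented resolutions cleanly for both signs of crossing. Everything else is a clean parity count, with the auxiliary observation that any alternately colourable smoothing forces a colour flip at every chord endpoint of \(G(D)\) doing most of the work.
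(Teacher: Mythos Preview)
The paper does not actually prove this proposition: it is quoted verbatim from \cite{Rushworth2017} and no argument is supplied here. So there is no ``paper's own proof'' to compare against in this document.

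That said, your proposed argument is correct and is essentially the standard parity proof of this fact. The key observation --- that in \emph{any} alternately colourable smoothing the two local arc-ends on either side of a chord endpoint in \(G(D)\) always land on differently coloured arcs, regardless of which resolution was taken at that crossing --- is exactly right: both resolutions of a crossing pair each over-arc-end with an under-arc-end, so traversing the Gauss circle past any chord endpoint forces a colour flip. The parity count from \(\alpha^{+}\) to \(\beta^{-}\) then goes through cleanly, since the two arcs of \(S^{1}\setminus\{\text{endpoints of }c\}\) carry chord-endpoint counts of the same parity (their sum is \(2n-2\), even).

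One small simplification for your final step: the identification of the pairing \(\{\alpha^{+},\beta^{-}\},\{\alpha^{-},\beta^{+}\}\) with the oriented resolution does not actually depend on the sign of the crossing. With the convention \(\alpha^{-}=\) incoming over, \(\alpha^{+}=\) outgoing over, \(\beta^{-}=\) incoming under, \(\beta^{+}=\) outgoing under, the oriented (Seifert) resolution always joins an incoming strand to an outgoing strand, i.e.\ realises exactly this pairing, for both positive and negative crossings. The sign only governs whether this is the \(0\)- or \(1\)-resolution. So you need not split into cases there.
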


In the construction of MDKK homology source-sink decorations are used to ensure that the oriented resolution of a virtual knot is, in fact, alternately colourable; doubled Khovanov homology does not do so. In the definition below, therefore, we consider the graph associated to the alternately coloured smoothing of a virtual knot.

\begin{definition}
	\label{Seifertgraph2}
	Given a virtual link diagram \( D \) denote by \( \mathscr{S} ( D ) \) the alternately coloured smoothing of \( D \). Denote by \( T_{\mathscr{S}} ( D ) \) the graph with a vertex for each cycle of \( \mathscr{S} ( D ) \) and an edge for each classical crossing of \( D \), decorated with the sign and parity of the crossing: every edge is decorated with an element of \( \lbrace (e,+),(e,-),(o,+),(o,-) \rbrace \), where \( (e,+) \) denotes an even positive crossing, \( (o,+) \) an odd positive crossing, and so on. The edge associated to a crossing is between the vertex or vertices associated to the cycles involved in the smoothing site of that crossing. The subgraph of \( T_{\mathscr{S}} ( D ) \) formed by removing all the edges labelled with either \( (e,+) \) or \( (o,-) \) is denoted \( T^{\Leftcircle}_{\mathscr{S}} ( D ) \). The subgraph of \( T_{\mathscr{S}} ( D ) \) formed by removing all the edges labelled with either \( (e,-) \) or \( (o,+) \) is denoted \( T^{\Rightcircle}_{\mathscr{S}} ( D ) \).\CloseDef
\end{definition}

\begin{definition}
	\label{Def:doubledu}
	Let \( D \) be a virtual knot diagram with \( n^o_+ \) (\( n^o_- \)) odd positive (negative) classical crossings. Define the quantities
	\begin{equation}
		\begin{aligned}
			U_d ( D ) &= \# \text{vertices} ( T_{\mathscr{S}} ( D ) ) - 2 \# \text{comp} ( T^{\Leftcircle}_{\mathscr{S}} ( D ) ) + wr ( D ) + J ( D ) + n^o_+ +1 \\
			\Delta_d ( D ) &= 2 ( \# \text{vertices} ( T_{\mathscr{S}} ( D ) ) - \# \text{comp} ( T^{\Leftcircle}_{\mathscr{S}} ( D ) ) - \# \text{comp} ( T^{\Rightcircle}_{\mathscr{S}} ( D ) ) + 1 ) \\ 
			& \quad + n^o_+ + n^o_-
		\end{aligned}
	\end{equation}
	where \( \# \text{comp} \) denotes the number of components of a graph.\CloseDef
\end{definition}

In direct analogy to \Cref{Thm:u} we have the following.

\begin{theorem}
	\label{Thm:doubledbounds}
	Let \( D \) be a diagram of a virtual knot \( K \). Then
	\begin{equation}
		U_d ( D ) - \Delta_d ( D ) \leq s_1 ( K ) \leq U_d ( D ).
	\end{equation} 
\end{theorem}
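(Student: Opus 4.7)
The plan is to adapt the approach used for \Cref{Thm:u,Thm:delta}, with modifications to handle the structural differences between \( \vkh ' \) and \( \dkh ' \). First I would produce chain-level canonical generators of \( \dkh ' ( K ) \) analogous to those of \Cref{Sec:cangen}, but anchored at the alternately coloured smoothing \( \mathscr{S} ( D ) \) rather than at the oriented smoothing, since by \Cref{Thm:leerank} and the discussion in \Cref{Sec:doubledreview} the generators of \( \dkh ' ( K ) \) are in bijection with orientations of alternately coloured smoothings, each yielding an upper generator \( \sg^{\text{u}}_o \) and a lower generator \( \sg^{\text{l}}_o \). With this in hand, I would compute the quantum grading \( s_1 ( \sg^{\text{u}}_o ) \) in closed form: the contribution \( \# \text{vertices} ( T_{\mathscr{S}} ( D ) ) \) comes from the number of tensor factors of the chain space, the \( wr ( D ) \) term from the usual global shift \( [-n_-]\{n_+ - 2n_-\} \), and the extra summands \( J ( D ) + n^o_+ \) should emerge from the homological shift to height \( J ( D ) \) (by \Cref{Prop:unoriented}) together with the quantum shifts accrued when crossing \( \eta \) edges to reach \( \mathscr{S} ( D ) \) from the oriented smoothing.

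For the upper bound \( s_1 ( K ) \leq U_d ( D ) \), once the formula for \( s_1 ( \sg^{\text{u}}_o ) \) has been obtained at the chain level, the bound follows from the fact that \( s_1 ( K ) \) is the top non-trivial quantum degree of \( \dkh ' ( K ) \) and that \( \sg^{\text{u}}_o \) represents a non-zero class. To emulate Lobb's argument I would reduce to the diagram \( D^- \) obtained by smoothing every positive classical crossing to its alternately coloured resolution; the canonical generator then splits as a tensor product indexed by the components of \( T^{\Leftcircle}_{\mathscr{S}} ( D ) \), accounting for the coefficient \( -2 \# \text{comp} ( T^{\Leftcircle}_{\mathscr{S}} ( D ) ) \) in \( U_d ( D ) \). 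A doubled analogue of \Cref{Lem:ras} will be required to guarantee that the relevant symmetric combination of \( \sg^{\text{u}}_o, \sg^{\text{u}}_{\overline{o}}, \sg^{\text{l}}_o, \sg^{\text{l}}_{\overline{o}} \) survives to a non-zero homology class after reducing to \( D^- \).

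For the lower bound \( s_1 ( K ) \geq U_d ( D ) - \Delta_d ( D ) \), I would exploit the behaviour of \( s_1 \) under mirror image, giving \( - s_1 ( K ) \leq U_d ( \overline{D} ) \), and then verify the combinatorial identity
\begin{equation*}
U_d ( D ) + U_d ( \overline{D} ) = \Delta_d ( D ).
\end{equation*}
This reduces to checking that \( wr ( \overline{D} ) = - wr ( D ) \), \( J ( \overline{D} ) = - J ( D ) \), that the subgraphs \( T^{\Leftcircle}_{\mathscr{S}} \) and \( T^{\Rightcircle}_{\mathscr{S}} \) swap under mirroring (so that the vertex and component counts pair up correctly), and that \( (n^o_+, n^o_-) \) swap; the extra \( n^o_+ + n^o_- \) appearing in \( \Delta_d ( D ) \) arises precisely from these swaps together with the doubling of the combinatorial term.

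The hard part will be correctly bookkeeping the quantum shifts across the \( \eta \) edges required to pass from the oriented resolution to the alternately coloured smoothing. Unlike the merge and split maps, \( \eta \) moves between the upper and lower summands of \( \cdkh ( D ) \) and contributes non-trivially to the quantum filtration; identifying which summand \( \sg^{\text{u}}_o \) actually lands in after traversing the sequence of odd crossings, and confirming that this element is a cycle for the perturbed doubled Lee differential rather than only for the unperturbed one, will require the most care. A secondary point of friction is ensuring that the doubled analogue of \Cref{Lem:ras} still delivers the necessary non-vanishing on the reduced diagram \( D^- \), since the presence of \( \eta \) maps in \( \cdkh ' ( D^- ) \) means that the induction on crossings does not carry over verbatim from the proof of \Cref{Thm:u}.
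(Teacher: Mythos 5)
Your high-level strategy is the right one, but there is a concrete error in the key combinatorial step. To emulate Lobb you propose to form \( D^- \) by smoothing \emph{every positive} classical crossing to its alternately coloured resolution. That is the correct prescription only when the oriented and alternately coloured resolutions coincide (i.e.\ for even knots). The correct reduction is to smooth exactly those crossings whose alternately coloured resolution is the \( 0 \)-resolution, because those are the outgoing edges of the cube at the alternately coloured smoothing and hence do not contribute to the incoming differential one needs for bounding the filtration from above. By \Cref{Prop:unoriented}, these are the \emph{even positive and odd negative} crossings, not all positive ones. If you smoothed all positive crossings, the resulting disjoint union would correspond to the subgraph of \( T_{\mathscr{S}} ( D ) \) obtained by deleting the \( (e,+) \) and \( (o,+) \) edges, which is neither \( T^{\Leftcircle}_{\mathscr{S}} ( D ) \) nor \( T^{\Rightcircle}_{\mathscr{S}} ( D ) \); the component count in your claimed formula \( -2\#\text{comp}(T^{\Leftcircle}_{\mathscr{S}}(D)) \) would therefore not match, and the bookkeeping of the remaining crossings' parities and signs would not reproduce the \( J(D) + n^o_+ \) terms in \( U_d ( D ) \). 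Once the correct set of crossings is smoothed, the residual components \( \widetilde{D}_i \) each satisfy \( J ( \widetilde{D}_i ) = n^i_+ \), which is what drives the quantum grading computation.

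Two secondary remarks. You should not need a new doubled analogue of \Cref{Lem:ras}: the statement that the alternately coloured generators of \( \dkh ' ( K ) \) are cycles and determine the top quantum degree is already established in \cite{Rushworth2017} (Lemma \(4.2\) there), so the concern about whether \( \sg^{\text{u}}_o \) is a cycle for the perturbed differential is a non-issue. Likewise, you do not need to track quantum shifts ``across the \( \eta \) edges from the oriented resolution to the alternately coloured one''; the generator \( \sg^{\text{u}} \) is anchored at the alternately coloured smoothing from the outset, and the crucial observation in the paper's argument is that although \( \cdkh ' ( D ) \) does \emph{not} split as a tensor product over the \( \widetilde{D}_i \) (precisely because of \( \eta \)), the generator \( \sg^{\text{u}} \) itself does, which is all that is needed. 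Your mirror-image identity \( U_d ( D ) + U_d ( \overline{D} ) = \Delta_d ( D ) \) for the lower bound is the right shape and matches the paper's reduction to the \( \Delta_v \) case.
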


\begin{proof}
	We shall go through the proof of \Cref{Thm:doubledbounds} in more detail than that of it's counterpart \Cref{Thm:u}, owing to the aforementioned differences between the theories \( vKh ' \) and \( DKh ' \). The gist of the proof is unchanged, however: as computation of \( s_1 ( K ) \) only requires knowledge of the partial chain complex
		\begin{center}
			\begin{tikzpicture}[roundnode/.style={}]
		
				\node[roundnode] (s0)at (-5,0)  {\({\cdkh_{s_2(K)-1} ( D ) '}\)};
		
				\node[roundnode] (s1)at (0,0)  {\({\cdkh_{s_2(K)} ( D ) '}\)};
		
				\draw[->,thick] (s0)--(s1) node[above,pos=0.5]{\( d_{s_2(K)-1} \)};
				
			\end{tikzpicture}
		\end{center}
	we ignore (by resolving them) classical crossings whose alternately coloured resolution is the \(0\)-resolution; such crossings are associated to outgoing maps from the alternately coloured resolution of \( D \) and do not contribute to \( d_{s_2(K)-1} \). This comes at the price, of course: we lose a large amount of the information contained in \( {\cdkh '}_{s_2(K)} ( D ) \). Nevertheless, the trade is a worthwhile one, as we are able to use what's left to obtain bounds on \( s_1 ( K ) \).
	
	Let \( D \) be a diagram of a virtual knot \( K \), with \( n_+ \) (\(n_-\)) positive (negative) classical crossings. Further, let \( n_+ = n^e_+ + n^o_+ \) and \( n_- = n^e_- + n^o_- \), where a superscript \(e\) (\(o\)) denotes the number of even (odd) crossings. Form a virtual link diagram, \( \widetilde{D} \), by resolving all even positive crossings and all odd negative crossings of \( D \) into their alternately coloured resolutions. (One readily observes that such crossings are those with alternately coloured resolution the \(0\)-resolution, as mentioned above.) We can write
		\begin{equation*}
			\widetilde{D} = \widetilde{D}_1 \sqcup \widetilde{D}_2 \sqcup \ldots \sqcup \widetilde{D}_r
		\end{equation*}
	where \( \widetilde{D}_i \) is a virtual link diagram with \( n^i_+ \) positive and \( n^i_- \) negative classical crossings (the parity of positive (negative) crossings is necessarily odd (even), of course). Further, for \( \mathscr{S} \) the alternately colourable smoothing of \( D \), we have
		\begin{equation*}
			\mathscr{S} = \mathscr{S}_1 \sqcup \mathscr{S}_2 \sqcup \ldots \sqcup \mathscr{S}_r
		\end{equation*}
	where \( \mathscr{S}_i \) is the unique alternately colourable smoothing of \( \widetilde{D}_i \) formed by resolving all crossings into the resolution they are resolved into in \( \mathscr{S} \).
	
	Notice that while \( \cdkh ' ( D ) \) does not split as a tensor product of the \( \cdkh ' ( \widetilde{D}_i ) \)'s, the alternately coloured generators of \( \dkh ' ( K ) \) do. That is, if \( \sg^u \) is associated to \( \mathscr{S} \), then
		\begin{equation}
		\label{Eq:acgensplit}
			\sg^{\text{u}} = \sg^{\text{u}}_1 \otimes \sg^{\text{u}}_2 \otimes \cdots \otimes \sg^{\text{u}}_r
		\end{equation}
	where \( \sg^{\text{u}}_i \) is the alternately coloured generator defined by \( \mathscr{S}_i \).
	
	We have \( J ( \widetilde{D}_i ) = n^i_+ \) (as all negative crossings of \( \widetilde{D}_i \) are even), so that the highest non-trivial quantum grading of \( {\cdkh '}_{n^i_+} ( \widetilde{D}_i ) \) containing \( \left[ \sg^{\text{u}}_i \right] \) is \( e_i + n^i_+ + n^i_+ - n^i_- \), where \( e_i \) denotes the number of cycles of \( \mathscr{S}_i \). Further, as a corollary to Lemma \(4.2\) of \cite{Rushworth2017}, we determine that \( \left[ \sg^{\text{u}}_i \right] \) is not of top degree, and that \( e_i + n^i_+ + n^i_+ - n^i_- - 2 \) is the highest non-trivial degree of \( {\cdkh '}_{n^i_+} ( \widetilde{D}_i ) \) containing it. By \Cref{Eq:acgensplit} and an argument directly analogous to Lobb's \cite{Lobb2011} we obtain
		\begin{equation*}
			\begin{aligned}
				s^{\text{u}}_{\text{min}} ( K ) &\leq n^e_+ - n^o_- + \sum_{i=1}^{r} \left( e_i + n^i_+ + n^i_+ - n^i_- - 2 \right) \\
				&= wr ( D ) + J ( D ) + n^o_+ + \# \text{vertices} ( T_{\mathscr{S}} ( D ) ) - 2 \# \text{comp} ( T^{\Leftcircle}_{\mathscr{S}} ( D ) ).
			\end{aligned}
		\end{equation*}
	Recalling that \( s^{\text{u}}_{\text{min}} ( K ) = s_1 ( K ) + 1 \), we arrive at 
		\begin{equation*}
			s_1 ( K ) \leq U_d ( D ).
		\end{equation*}
	To see that
		\begin{equation*}
			U_d ( D ) - \Delta_d ( D ) \leq s_1 ( K ) 
		\end{equation*}
	repeat the proof of \Cref{Thm:delta}, which we are free to do as the doubled Rasmussen invariant replicates the behaviour of its classical counterpart with respect to the	 mirror image.
\end{proof}

\subsubsection{Simplifying \( \Delta_d ( D ) \)}\label{sec:ddeltazero}

Much of the analysis used in the \Cref{Sec:deltazero} may be repeated in order to characterise a case in which the \( \Delta_d \) formula simplifies. However, we do not recover the vanishing result as in the case of \( \Delta_v \).

\begin{definition}
	\label{Def:dhom}
	Let \( D \) be a virtual knot diagram and \( T_{\mathscr{S}} ( D ) \) the graph associated to it. Recall that each edge of \( T_{\mathscr{S}} ( D ) \) is decorated with exactly one element of
	\begin{equation*}
	\lbrace (e,+),(e,-),(o,+),(o,-) \rbrace.
	\end{equation*}
	Let \( \Rightcircle = \lbrace (e,-),(o,+) \rbrace \) and \( \Leftcircle = \lbrace (e,+),(o,-) \rbrace \). The graph \( T_{\mathscr{S}} ( D ) \) is \emph{d-homgenous} if every block is decorated with elements of either \( \Rightcircle \) or \( \Leftcircle \), but not both.

	The diagram \( D \) is d-homogenous if \( T_{\mathscr{S}} ( D ) \) is d-homogenous. A virtual knot is d-homogenous if it has a d-homogenous diagram.\CloseDef
\end{definition}

\begin{proposition}
	\label{Prop:ddelta}
	Let \( D \) be a virtual link diagram and \( T_{\mathscr{S}} ( D ) \) the graph associated to it. Then \( D \) is d-homogenous if and only if
	\begin{equation*}
		\# \text{vertices} ( T_{\mathscr{S}} ( D ) ) - \# \text{comp} ( T^{\Leftcircle}_{\mathscr{S}} ( D ) ) - \# \text{comp} ( T^{\Rightcircle}_{\mathscr{S}} ( D ) ) + 1 = 0.
	\end{equation*}	
\end{proposition}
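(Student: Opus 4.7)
The plan is to mimic the argument used for \Cref{Thm:deltazero}, reducing the vanishing statement to the triviality of the first Betti number of an auxiliary bipartite graph whose construction parallels that of \( G_O \) in \Cref{Def:gdelta}. Concretely, I would define a graph \( G_{\mathscr{S}} \) by placing one vertex for each connected component of \( T^{\Leftcircle}_{\mathscr{S}} ( D ) \) and one vertex for each connected component of \( T^{\Rightcircle}_{\mathscr{S}} ( D ) \), and then, for every vertex \( v \) of \( T_{\mathscr{S}} ( D ) \), placing an edge in \( G_{\mathscr{S}} \) between the two vertices corresponding to the unique components of \( T^{\Leftcircle}_{\mathscr{S}} ( D ) \) and of \( T^{\Rightcircle}_{\mathscr{S}} ( D ) \) in which \( v \) lies. (Any loop in \( T_{\mathscr{S}} ( D ) \) leaves \( G_{\mathscr{S}} \) unchanged, exactly as in \Cref{Prop:removingselfedges}, so one may harmlessly work modulo loops.)

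Having set this up, I would first observe that \( T_{\mathscr{S}} ( D ) \) is connected (as \( D \) represents a knot), which in turn forces \( G_{\mathscr{S}} \) to be connected. Then the Euler characteristic computation
\begin{equation*}
b_1 ( G_{\mathscr{S}} ) = \# \text{edges} ( G_{\mathscr{S}} ) - \# \text{vertices} ( G_{\mathscr{S}} ) + 1 = \# \text{vertices} ( T_{\mathscr{S}} ( D ) ) - \# \text{comp} ( T^{\Leftcircle}_{\mathscr{S}} ( D ) ) - \# \text{comp} ( T^{\Rightcircle}_{\mathscr{S}} ( D ) ) + 1
\end{equation*}
identifies the left-hand side of the claimed equality with \( b_1 ( G_{\mathscr{S}} ) \). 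It therefore suffices to prove that \( G_{\mathscr{S}} \) is a tree if and only if \( D \) is d-homogeneous.

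For the equivalence of d-homogeneity and \( b_1 ( G_{\mathscr{S}} ) = 0 \), I would argue exactly as Abe does in the classical case. In the forward direction, assume every block of \( T_{\mathscr{S}} ( D ) \) carries only \( \Leftcircle \)-edges or only \( \Rightcircle \)-edges; then cut vertices of \( T_{\mathscr{S}} ( D ) \) are precisely the places where distinct components of \( T^{\Leftcircle}_{\mathscr{S}} ( D ) \) or of \( T^{\Rightcircle}_{\mathscr{S}} ( D ) \) meet, and induction on the block-decomposition of \( T_{\mathscr{S}} ( D ) \) shows that \( G_{\mathscr{S}} \) is assembled by gluing stars along single vertices, hence is a tree. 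Conversely, if some block \( B \) of \( T_{\mathscr{S}} ( D ) \) contains both a \( \Leftcircle \)-edge and a \( \Rightcircle \)-edge, then since \( B \) is \(2\)-edge-connected one may find two internally disjoint paths in \( B \) between the vertices of these edges, and the composition of the corresponding edge-paths in \( G_{\mathscr{S}} \) traverses two distinct \( \Leftcircle \)-components (or two distinct \( \Rightcircle \)-components) to produce a cycle, giving \( b_1 ( G_{\mathscr{S}} ) \geq 1 \).

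The only subtle point — and the main obstacle I anticipate — is this last implication, where one must verify that the cycle produced in \( G_{\mathscr{S}} \) is genuinely non-trivial (i.e.\ not collapsed to a single edge traversed twice) and that the presence of loops in \( T_{\mathscr{S}} ( D ) \), which the doubled setting permits more freely than the classical setting due to the use of the alternately coloured rather than oriented smoothing, does not spuriously enlarge or shrink block sizes. The reduction to the loopless case via the analogue of \Cref{Prop:removingselfedges} neutralises this concern, so the classical block-graph argument goes through verbatim, and the proposition follows.
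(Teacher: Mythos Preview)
Your approach is essentially the paper's: define \( G_{\mathscr{S}} \) exactly as you do, identify the quantity in question with \( b_1 ( G_{\mathscr{S}} ) \), and then invoke Abe's homogeneity criterion. The one point of divergence is your treatment of loops. You worry that \( T_{\mathscr{S}} ( D ) \) may carry loops ``more freely than the classical setting due to the use of the alternately coloured rather than oriented smoothing'', and you neutralise this via an analogue of \Cref{Prop:removingselfedges}. The paper instead makes the cleaner observation that \( T_{\mathscr{S}} ( D ) \) is \emph{bipartite}: its vertices are the cycles of the alternately coloured smoothing \( \mathscr{S} ( D ) \), and every crossing site joins cycles of opposite colour, so no edge can be a loop. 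Abe's original argument then applies directly, without any loop-removal step.

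In fact your intuition is inverted. It is the oriented-smoothing graph \( T_O ( D ) \) of \Cref{Sec:deltazero} that can acquire loops in the virtual world, precisely because the oriented smoothing of a virtual knot need not be alternately colourable; this is why \Cref{Prop:removingselfedges} and the notion of l-homogeneity were needed there. By contrast, \( \mathscr{S} ( D ) \) is alternately colourable by construction, so \( T_{\mathscr{S}} ( D ) \) is automatically loopless. Your argument is still correct --- removing loops that are not present does no harm --- but the detour through \Cref{Prop:removingselfedges} and the ``subtle point'' you flag are both unnecessary once the bipartiteness is noticed.
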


\begin{proof}
	Let \( G_{\mathscr{S}} \) denote the graph formed from \( T_{\mathscr{S}} ( D ) \) in direct analogy to \( G_O \), as given in \Cref{Def:gdelta}, with \( T^{\Leftcircle}_{\mathscr{S}} ( D ) \) and \( T^{\Rightcircle}_{\mathscr{S}} ( D ) \) taking the place of \( T^+_O ( D ) \) and \( T^-_O ( D ) \). The graph \( T_{\mathscr{S}} ( D ) \) is bipartite as \( \mathscr{S} ( D ) \) is alternately coloured. Thus it is loopless and Abe's proof may be employed to show that \( T_{\mathscr{S}} ( D ) \) is homogenous if and only if \( b_1 ( G_{\mathscr{S}} ) = 0 \). We conclude by noticing that
	\begin{equation*}
		b_1 ( G_{\mathscr{S}} ) = \# \text{vertices} ( T_{\mathscr{S}} ( D ) ) - \# \text{comp} ( T^{\Leftcircle}_{\mathscr{S}} ( D ) ) - \# \text{comp} ( T^{\Rightcircle}_{\mathscr{S}} ( D ) ) + 1,
	\end{equation*}
	which follows exactly as in the case of \( \Delta_v \) and \( G_O \).
\end{proof}

\begin{corollary}
	\label{Cor:ddeltamin}
	Let \( D \) be diagram of a virtual knot \( K \). If \(D\) is d-homogenous then
	\begin{equation*}
		U_d ( D ) - n^o_+ - n^o_- \leq s_1 ( K ) \leq U_d ( D )
	\end{equation*}
	where \( n^o_+ \) (\( n^o_- \)) denotes the number of odd positive (negative) classical crossings of \( D \).
\end{corollary}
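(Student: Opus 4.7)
The proof should be a direct corollary, obtained by substituting the d-homogeneity condition from \Cref{Prop:ddelta} into the general bounds of \Cref{Thm:doubledbounds}. The plan is as follows.

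First I would recall the statement of \Cref{Thm:doubledbounds}, namely
\begin{equation*}
U_d ( D ) - \Delta_d ( D ) \leq s_1 ( K ) \leq U_d ( D ),
\end{equation*}
so that the upper bound \( s_1 ( K ) \leq U_d ( D ) \) is immediate, and only the lower bound requires work. The task therefore reduces to showing that, under the d-homogeneity assumption, \( \Delta_d ( D ) = n^o_+ + n^o_- \).

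Next I would apply \Cref{Prop:ddelta}. Since \(D\) is d-homogenous by hypothesis, that proposition gives
\begin{equation*}
\# \text{vertices} ( T_{\mathscr{S}} ( D ) ) - \# \text{comp} ( T^{\Leftcircle}_{\mathscr{S}} ( D ) ) - \# \text{comp} ( T^{\Rightcircle}_{\mathscr{S}} ( D ) ) + 1 = 0,
\end{equation*}
so that the first (parenthesised) summand in the definition of \( \Delta_d ( D ) \) (see \Cref{Def:doubledu}) vanishes. What remains of \( \Delta_d ( D ) \) is precisely \( n^o_+ + n^o_- \), and plugging this into the lower bound of \Cref{Thm:doubledbounds} yields the claimed inequality
\begin{equation*}
U_d ( D ) - n^o_+ - n^o_- \leq s_1 ( K ).
\end{equation*}

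There is no real obstacle here: all of the substantive work has been carried out in establishing \Cref{Thm:doubledbounds} (the existence of the two-sided bound in terms of \( \Delta_d \)) and \Cref{Prop:ddelta} (the graph-theoretic characterisation of d-homogeneity). The corollary is then merely an arithmetical specialisation of the general inequality to a class of diagrams on which the ``even'' part of \( \Delta_d \) vanishes, leaving only the odd-crossing correction \( n^o_+ + n^o_- \) that is built into \( \Delta_d \) by definition and cannot be removed by any homogeneity condition on \( T_{\mathscr{S}} ( D ) \).
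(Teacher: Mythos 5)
Your argument is exactly right and is the same one the paper intends: substitute the vanishing from \Cref{Prop:ddelta} into the definition of \( \Delta_d ( D ) \) in \Cref{Def:doubledu} to get \( \Delta_d ( D ) = n^o_+ + n^o_- \), then plug into the two-sided bound of \Cref{Thm:doubledbounds}. The paper states the corollary without an explicit proof precisely because it is this immediate specialisation.
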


\section{Computation and estimation of the slice genus}\label{Sec:applications}

In this section we use the bounds \( U_v \) and \( U_d \) to compute or estimate the slice genus of a number of virtual knots. The computations are made by finding a surface of appropriate genus between the given knot and the unknot.

The following table contains the results of the analysis for the virtual knots of \(4\) crossing or less in Green's table \cite{Green}. A blank entry denotes an unknown, and most computations of \( s \), \( s_1 \), and \( s_2 \) (or the interval in which they lie) are made by computing \( U_{v/d} \), \( \Delta_{v/d} \), and \( J \) for the diagram given in the table. The exceptions to this are \( s_1 ( 3.3 ) \), which the author computed by hand from \( \dkh ' ( 3.3 ) \), and leftmost knots, for which the definition and the method of computation of \( s_1 \) are given in \cite[Section \(4.4\)]{Rushworth2017}. Further, many computations of \( s \), \( s_2 \), and \( s_2 \) are made by spotting that the knot in question is a connect sum of two other knots, and employing the additivity of the invariants along with their invariance under flanking \cite[Definition \(2.6\)]{Rushworth2017}. (As observed in \Cref{Subsec:evenknots}, \(s\) and \(s_1\) coincide for even knots, so that the invariants are buy one get one free in this case.)

Exact values of \( g^\ast \) are obtained by constructing a cobordism which attains a lower bound given by \( s \), \( s_1 \), or \( s_2 \). Upper bounds on \( g^\ast \) are obtained by constructing a cobordism of the given genus, and employing the fact that half the crossing number bounds the slice genus of a knot from above (as in the classical case) \cite{Boden2017}. Shortly after posting a previous version of this paper to the arXiv the author learned of the work of Boden, Chrisman, and Gaudreau in which they compute or estimate the slice genus of a very large number of the virtual knots of \(6\) crossings or less \cite{Boden2017,BCGtable}. In the table below we do not include the values of \( g^\ast \) they arrive at in order to demonstrate the infomation that can be obtained using the bounds \( U_v \), \( U_d \), and the properties of the virtual and doubled Rasmussen invariants.

\begin{center}\label{Tab:slicegenera}
	\csvreader[head to column names,longtable=|c|c|c|c|c|c|c|c|,table head=\hline Knot & l-hom. & d-hom. & \(s\) & \(s_1\) & \(s_2\) & \(g^\ast\) \\\hline\endhead\hline\endfoot,table foot=\hline]{vknotinfo_v2.csv}%
	{1=\vknot,5=\lhom,6=\dhom,7=\sinv,8=\soneinv,9=\stwoinv,10=\sgenus}%
	{\vknot & \lhom & \dhom & \sinv & \soneinv & \stwoinv & \sgenus}
\end{center}

From the table we are able to make some observations regarding the two extensions of the Rasmussen invariant. We see that only \( s_1 \) is able to distinguish between \( 2.1 \) and \( 3.3 \). Further, there are a number of knots for which the easy to compute \( s_2 \) obstructs sliceness while the harder to compute \( s \) does not. The virtual and doubled Rasmussen invariants are also able to distinguish many pairs of knots which have the same positive slice genus, showing that they are not concordant to one another.

We also give presentations of the surfaces of genus \(0\), \(1\), and \(2\) used to determine the slice genus of the knots \(4.8\), \(3.5\), and \( 4.15 \) respectively; they are contained in \Cref{Fig:48slicedisc,Fig:35surface,Fig:415surface}. Unlabeled arrows denote virtual Reidemeister moves, while those which denote \(1\)-handle additions are so labelled. Red arcs between strands denote the locations of such handle additions within individual diagrams.

\begin{figure}
	\includegraphics[scale=0.75]{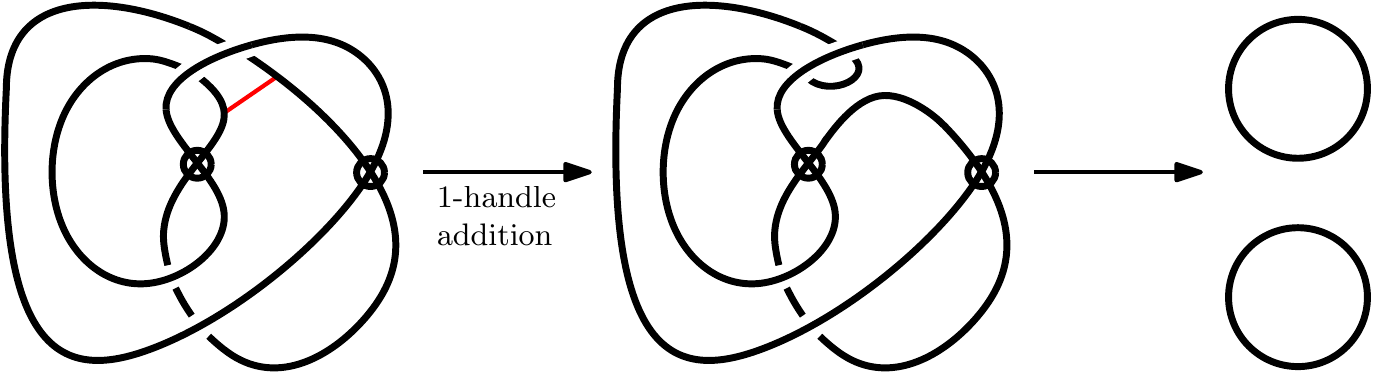}
	\caption{A slice disc for virtual knot \(4.8\).}
	\label{Fig:48slicedisc}
\end{figure}

\begin{figure}
	\includegraphics[scale=0.75]{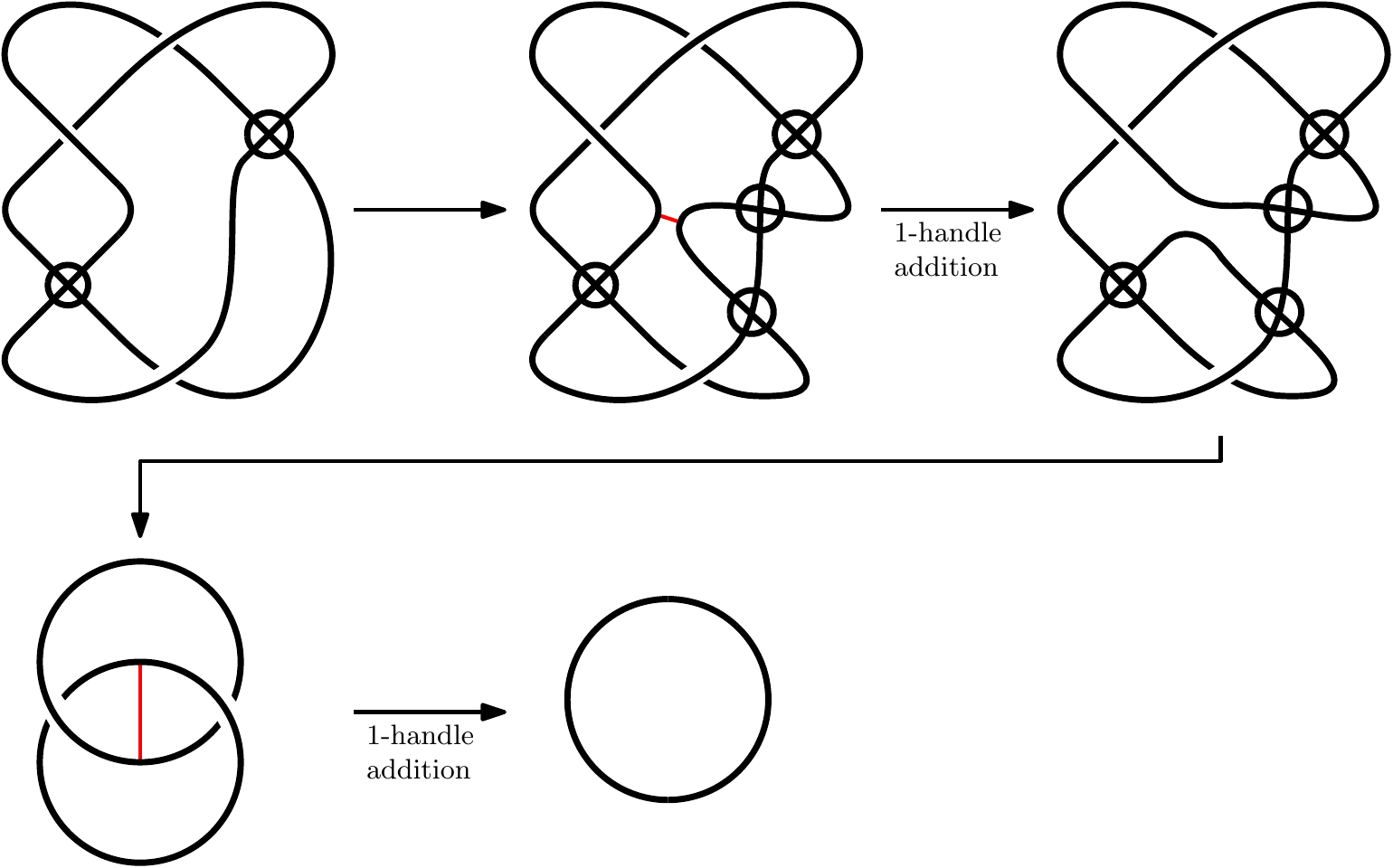}
	\caption{A genus \(1\) cobordism to the unknot from virtual knot \(3.5\).}
	\label{Fig:35surface}
\end{figure}

\begin{figure}
	\includegraphics[scale=0.75]{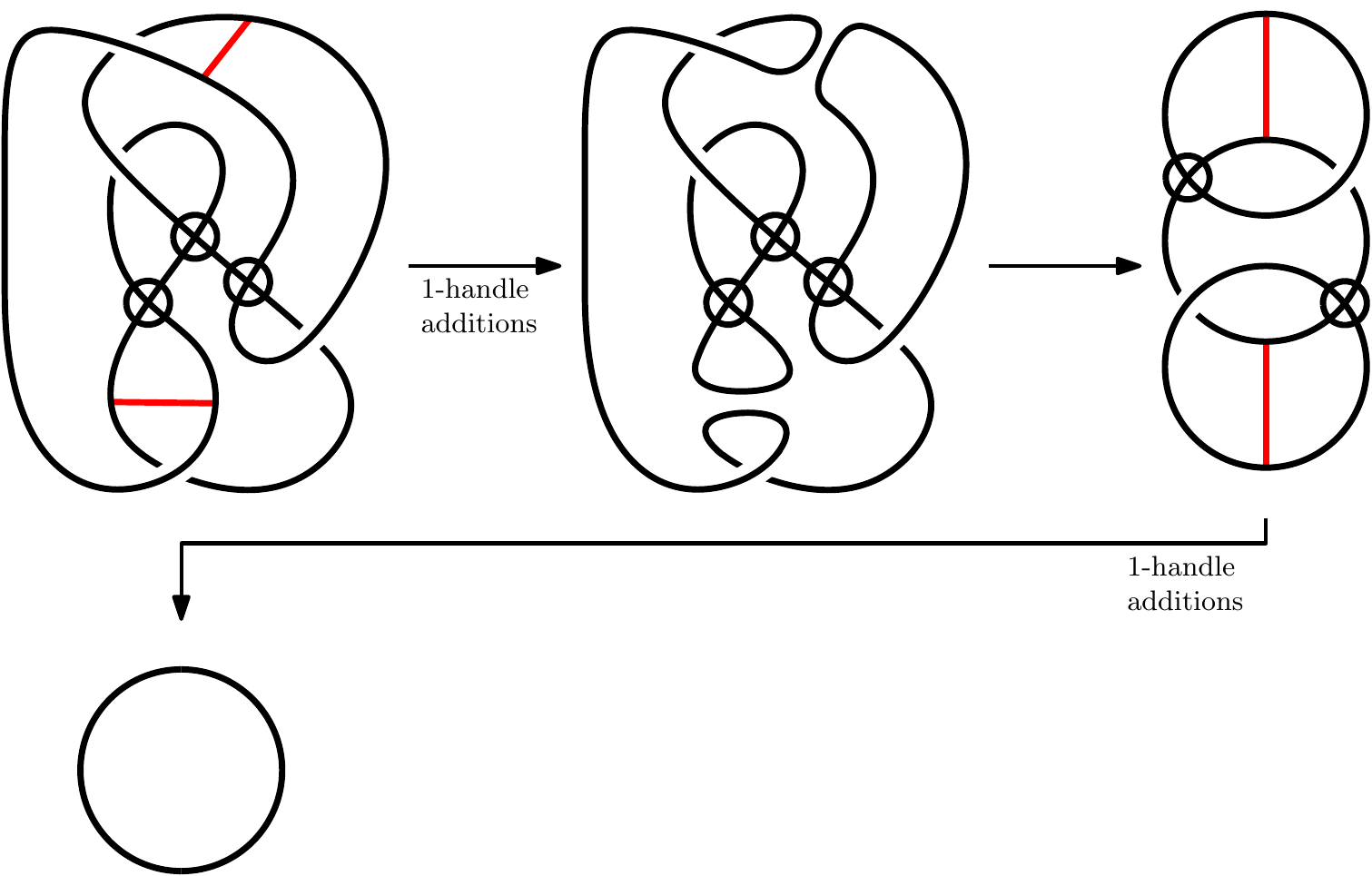}
	\caption{A genus \(2\) cobordism to the unknot from virtual knot \(4.15\).}
	\label{Fig:415surface}
\end{figure}

To conclude we list the results of similar analysis as that used to produce the previous table, this time on the virtual knots for which Boden, Chrisman, and Gaudreau's methods are unable to obstruct sliceness but the virtual and doubled Rasmussen invariants can. The upper bounds on \( g^\ast \) are those given by Boden, Chrisman, and Gaudreau \cite{BCGtable}. As in the case of knots of \( 4 \) or less crossings many of the computations are made by spotting connect sums.

\begin{center}\label{Tab:slicegaps}
	\csvreader[head to column names,longtable=|c|c|c|c|c|c|c|,table head=\hline Knot & l-hom. & d-hom. & \(s\) & \(s_1\) & \(s_2\) & \(g^\ast\) \\\hline\endhead\hline\endfoot,table foot=\hline]{vknotinfo_gaps.csv}%
	{1=\vknot,5=\lhom,6=\dhom,7=\sinv,8=\soneinv,9=\stwoinv,10=\sgenus}%
	{\vknot & \lhom & \dhom & \sinv & \soneinv & \stwoinv & \sgenus}
\end{center}

\bibliographystyle{plain}
\bibliography{library}

\end{document}